\newtheorem{theorem}{Theorem}[section]
\newtheorem{lemma}[theorem]{Lemma}
\newtheorem{corollary}[theorem]{Corollary}
\newtheorem{proposition}[theorem]{Proposition}
\theoremstyle{definition}
\newtheorem{remark}[theorem]{Remark}
\def\N{\mathbb{N}}
\def\R{\mathbb{R}}
\def\Z{\mathbb{Z}}
\def\Q{\mathbb{Q}}
\def\F{\mathbb{F}}
\def\CFK {\widehat{\operatorname{CFK}}}
\def\CFD {\widehat{\operatorname{CFD}}}
\def\CFA {\widehat{\operatorname{CFA}}}
\def\CFAA {\widehat{\operatorname{CFAA}}}
\def\CFDD {\widehat{\operatorname{CFDD}}}
\def\CFDA {\widehat{\operatorname{CFDA}}}
\def\CFDm {\operatorname{CFD}^-}
\def\CFAm {\operatorname{CFA}^-}
\def\CFKm {\operatorname{CFK}^-}
\def\HFKm {\operatorname{HFK}^-}
\def\gCFKm {\operatorname{gCFK}^-}
\def\gCFK {\operatorname{g \widehat{CFK}}}
\def\DDhat {\widehat{\mathcal{DD}}}
\def\srel {\ul{\mathfrak{s}}}
\def\AA {\mathcal{A}}
\def\CC {\mathcal{C}}
\def\FF {\mathcal{F}}
\def\GG {\mathcal{G}}
\def\HH {\mathcal{H}}
\def\MM {\mathcal{M}}
\def\PP {\mathcal{P}}
\def\ZZ {\mathcal{Z}}
\def\SS{\mathcal{S}}
\def\I{\mathbb{I}}
\def\T{\mathbb{T}}
\def\a {\mathbf{a}}
\def\s {\mathbf{s}}
\def\t {\mathbf{t}}
\def\x {\mathbf{x}}
\def\y {\mathbf{y}}
\def\ul {\underline}
\newcommand{\abs}[1] {\left\lvert #1 \right\rvert}
\newcommand{\gen}[1] {\langle #1 \rangle}
\def\minus{\smallsetminus}
\def\co{\colon\thinspace}
\def\conn{\mathbin{\#}}
\DeclareMathOperator{\im}{im}  
 \DeclareMathOperator{\lk}{lk} \DeclareMathOperator{\pt}{pt}
 \DeclareMathOperator{\Spin}{Spin}
 \DeclareMathOperator{\nbd}{nbd}
\DeclareMathOperator{\PD}{PD} \DeclareMathOperator{\gr}{gr}
\DeclareMathOperator{\dr}{dr} \DeclareMathOperator{\Mor}{Mor}
\DeclareMathOperator{\Vect}{Vect} 
\DeclareMathOperator{\ex}{ex} \DeclareMathOperator{\ind}{ind}
\definecolor{darkblue}{rgb}{0,0,0.5}
\definecolor{darkred}{rgb}{0.5,0,0}
\definecolor{darkgreen}{rgb}{0,0.5,0}
\numberwithin{equation}{section}
\renewcommand{\tilde}{\widetilde}
\renewcommand{\bar}{\overline}
\renewcommand{\hat}{\widehat}
\begin{document}

\title{Non-surjective satellite operators and piecewise-linear concordance}

\author{Adam Simon Levine}
\address{Department of Mathematics \\ Princeton University \\ Princeton, NJ 08540}
\email{asl2@math.princeton.edu}

\begin{abstract}
We exhibit a knot $P$ in the solid torus, representing a generator of first homology, such that for any knot $K$ in the $3$-sphere, the satellite knot with pattern $P$ and companion $K$ is not smoothly slice in any homology $4$-ball. As a consequence, we obtain a knot in a homology $3$-sphere that does not bound a piecewise-linear disk in any homology $4$-ball.
\end{abstract}

\subjclass[2010]{57M27, 57R58, 57Q60}

\maketitle

\section{Introduction}

A knot $K$ in the boundary of a smooth $4$-manifold $X$ is called \emph{smoothly slice} (in $X$) if it bounds a smoothly embedded disk in $X$, and \emph{topologically slice} if it merely bounds a locally flatly embedded disk (i.e., a continuous embedding with a topological normal bundle). The classical study of knot concordance aims to classify which knots in the $3$-sphere $S^3$ are (topologically or smoothly) slice in the $4$-ball $D^4$. Note that without the requirement of either smoothness or local flatness, this question becomes uninteresting, since every knot $K$ in $S^3$ bounds a piecewise-linear (PL) embedded disk in $D^4$ (and hence in any $4$-manifold with boundary $S^3$), obtained by taking the cone on $K$.

In the 1960s, Zeeman \cite{ZeemanDunce} conjectured that the analogous statement can fail to hold for knots in the boundary of an arbitrary compact, contractible $4$-manifold, making PL concordance of knots in $3$-manifolds other than $S^3$ a nontrivial subject. Akbulut \cite{AkbulutZeeman} proved Zeeman's conjecture in 1991, exhibiting a contractible $4$-manifold $X$ and a knot $J \subset \partial X$ which does not bound any embedded PL disk in $X$. However, this knot does bound a smoothly embedded disk in a different contractible $4$-manifold $X'$ with $\partial X' = \partial X$. The purpose of this paper is to prove a stronger statement:

\begin{theorem} \label{thm:main}
There exist a smooth, compact, contractible $4$-manifold $X$ and a knot $J \subset \partial X$ such that $J$ does not bound a PL disk in $X$ or in any other rational homology $4$-ball $X'$ with $\partial X' = \partial X$. Moreover, $J$ can be assumed to be topologically slice in $X$.
\end{theorem}

Our strategy for proving Theorem \ref{thm:main} is to reduce it to a problem concerning smooth concordance of satellite knots in $S^3$. We begin by reviewing some terminology. Two oriented knots $K_0$ and $K_1$ in $S^3$ are called (smoothly) \emph{concordant} if there is a smoothly embedded annulus in $S^3 \times I$ whose boundary is $-K_0 \times \{0\} \cup K_1 \times \{1\}$. The set of concordance classes of knots in $S^3$ forms a group $\CC$ under connected sum, with identity element given by the concordance class of the unknot. It is often useful to consider weaker notions of concordance as well. We call $K_0$ and $K_1$ \emph{exotically concordant} (or \emph{pseudo-concordant}) if they cobound a smoothly embedded annulus in a smooth $4$-manifold that is homeomorphic to $S^3 \times I$ but perhaps has an exotic smooth structure, and (for any ring $R$) \emph{$R$-homology concordant} if they cobound a smoothly embedded annulus in a smooth manifold with the $R$-homology of $S^3 \times I$. Denote the corresponding groups by $\CC_{\ex}$ and $\CC_{R}$, respectively. There are surjective forgetful maps
\[
\CC \twoheadrightarrow \CC_{\ex} \twoheadrightarrow \CC_{\Z} \twoheadrightarrow \CC_{\Q}.
\]
We say that $K \subset S^3$ is (smoothly) \emph{slice}, \emph{exotically slice}, or \emph{$R$-homology slice} if it represents the trivial element of $\CC$, $\CC_{\ex}$, or $\CC_{R}$, respectively; this is equivalent to $K$ bounding an embedded disk in $D^4$, a contractible $4$-manifold (which must be homeomorphic to $D^4$ by work of Freedman \cite{Freedman4Manifolds}), or an $R$-homology $4$-ball. (If the smooth $4$-dimensional Poincar\'e conjecture is true, then any smooth, compact, contractible $4$-manifold with boundary $S^3$ must be diffeomorphic to $D^4$, so $\CC = \CC_{\ex}$.)

Fix an orientation on $S^1$; this determines a generator of $H_1(S^1 \times D^2;\Z) \cong \Z$. Given an oriented knot $P \subset S^1 \times D^2$, the \emph{winding number} of $P$ is the integer $m$ such that $P$ represents $m$ times this generator. For any knot $K \subset S^3$, the Seifert framing of $K \subset S^3$ determines an embedding of $S^1 \times D^2$ in $S^3$ as a regular neighborhood of $K$, up to isotopy. We define $P(K)$, the \emph{satellite of $K$ with pattern $P$}, as the image of $P$ under this embedding. If $K_0$ is concordant to $K_1$, then $P(K_0)$ is concordant to $P(K_1)$, so $P$ induces a function from each of the groups $\CC, \CC_{\ex}, \CC_R$ to itself, known as a \emph{satellite operator}. (In general, satellite operators are not group homomorphisms.)

Problem 1.45 in Kirby's problem list \cite{KirbyList}, attributed to Akbulut, asks whether there exists a winding number $\pm 1$ satellite operator $P$ for which $P(K)$ is never exotically slice. The following theorem answers a stronger form of this question in the affirmative:

\begin{theorem} \label{thm:nonsurjective}
There exists a pattern knot $P \subset S^1 \times D^2$ with winding number $1$ such that for any knot $K \subset S^3$, $P(K)$ is not slice in any rational homology $4$-ball; that is, the images of the maps on $\CC$, $\CC_{\ex}$, $\CC_\Z$, and $\CC_\Q$ induced by $P$ do not contain $0$.
\end{theorem}

\begin{figure}
\labellist
 \pinlabel (a) at 10 110
 \pinlabel (b) at 165 110
 \pinlabel (c) at 338 110
 \small
 \pinlabel $P$ at 125 56
 \pinlabel $0$ at 125 93
 \pinlabel $J_P$ [b] at 20 68
 \pinlabel $P$ at 300 56
 \pinlabel $0$ at 300 93
 \pinlabel $0$ [l] at 242 55
 \pinlabel $0$ [b] at 191 71
 \pinlabel $-K$ at 177 55
 \pinlabel $P$ at 435 56
 \pinlabel $0$ at 435 93
 \pinlabel $-K$ at 357 55
 \pinlabel $\gamma$ [b] at 397 110
 \large
 \pinlabel $\sim$ at 326 55
\endlabellist
\includegraphics[scale=0.9]{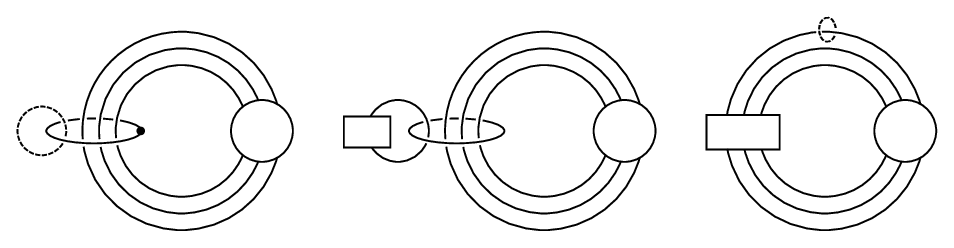}
\caption{(a) Kirby diagram for the Mazur manifold $X_P$, along with the knot $J_P \subset Y_P$. The circle marked $P$ denotes a tangle with an odd number of strands. (b-c) Dehn surgery diagrams showing that $\partial W = S^3_0(P(-K))$, whose first homology is generated by the curve $\gamma$. In (c), the strands passing through the box marked $-K$ are $0$-framed parallels of $-K$.}
\label{fig:kirby}
\end{figure}

Note that to prove Theorem \ref{thm:nonsurjective}, it suffices to find a winding-number-one satellite operator $Q$ that is non-surjective on $\CC_{\Q}$, i.e., that there exists a knot $L \subset S^3$ such that $L$ is not rational homology concordant to $Q(K)$ for any $K \subset S^3$. Then $P = Q \conn {-L} \subset S^1 \times D^2$ satisfies the conclusion of Theorem \ref{thm:nonsurjective}.

Before introducing the example that proves Theorem \ref{thm:nonsurjective}, we show how this result implies the first part of Theorem \ref{thm:main}. For any pattern knot $P \subset S^1 \times D^2$, let $\tilde P \subset S^3$ be the knot obtained by applying $P$ to the unknot. Let $\lambda_P$ be the framing of $P$ that corresponds to the Seifert framing of $\tilde P$. Viewing $P$ as a knot in the boundary of $S^1 \times D^3$, let $X_P$ be the manifold obtained by attaching a $2$-handle to $S^1 \times D^3$ along $P$ with framing $\lambda_P$, and let $Y_P = \partial X_P$. (A schematic Kirby diagram for $X_P$ is shown in Figure \ref{fig:kirby}(a).) Note that $X_P$ is contractible if and only if the winding number of $P$ is $\pm 1$, in which case $Y_P$ is a homology sphere. Let $J_P \subset Y_P$ be the knot $S^1 \times \{\pt\}$. As an example, let $Q$ denote the \emph{Mazur pattern} shown in Figure \ref{fig:Q}, so called because $X_Q$ (with orientation reversed) is Mazur's original construction of a contractible $4$-manifold with boundary not homeomorphic to $S^3$ \cite{MazurContractible}.

\begin{figure}
\includegraphics[scale=0.75]{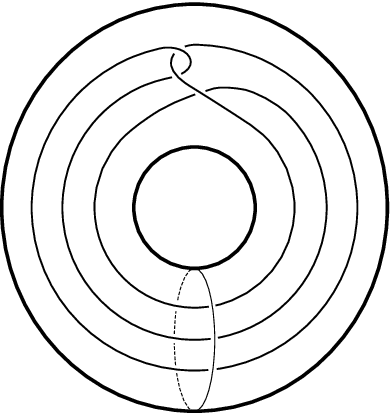}
\caption{The Mazur pattern knot $Q$ in the solid torus $S^1 \times D^2$.}
\label{fig:Q}
\end{figure}

\begin{proposition} \label{prop:PL}
If $P \subset S^1 \times D^2$ is a pattern knot with winding number $1$ that satisfies the conclusion of Theorem \ref{thm:nonsurjective}, then $J_P \subset Y_P$ does not bound a PL disk in any rational homology $4$-ball $X$ with $\partial X = Y_P$.
\end{proposition}

\begin{proof}
Suppose, toward a contradiction, that $J_P$ bounds a PL disk $\Delta \subset X$. We may assume that $\Delta$ is smooth away from finitely many singular points that are cones on knots $K_1, \dots, K_n$. By deleting neighborhoods of arcs in $\Delta$ connecting the cone points to $\partial \Delta$, we see that $J_P \conn {-K}$ is smoothly slice in $X$, where $K = K_1 \conn \dots \conn K_n$.

Let $W$ be obtained by attaching a $0$-framed $2$-handle to $X$ along $J_P \conn -K$. Then $W$ is a homology $S^2 \times D^2$, and a generator of $H_2(W)$ can be represented by a sphere $S$ with trivial normal bundle, obtained as the union of a slice disk for $J_P \conn {-K}$ and the core of the $2$-handle. As seen in Figure \ref{fig:kirby}(b-c), the boundary of $W$ is diffeomorphic to $0$-surgery on $P(-K)$; let $\gamma \subset S^3_0(P(-K))$ be the core of the surgery torus, represented in a surgery picture by a meridian of $P(-K)$.

Let $W'$ be obtained from $W$ by surgering out $S$; $W'$ is a homology $S^1 \times D^3$, and $H_1(W')$ is generated by $[\gamma]$. Attaching a $0$-framed $2$-handle to $W'$ along $\gamma$ produces a rational homology $4$-ball $Z$ whose boundary is $S^3$. The cocore of the new $2$-handle is a smooth slice disk for $P(-K)$, contradicting the conclusion of Theorem \ref{thm:main}.
\end{proof}


\begin{remark} \label{rmk:akbulut}
In \cite{AkbulutZeeman}, Akbulut proves that $J_Q$ does not bound a PL disk in $X_Q$. However, note that $Q$ does not satisfy the conclusion of Theorem \ref{thm:nonsurjective}, since $\tilde Q$ is the unknot. Moreover, if we form the Kirby diagram for $X_Q$ as in Figure \ref{fig:kirby}(a) using $Q$, we may interchange the dotted $1$-handle and $0$-framed $2$-handle to obtain a different contractible $4$-manifold $X'$ in which $J_Q$ is smoothly slice. Note that $X'$ is in fact diffeomorphic to $X$, but not rel boundary.
\end{remark}

\begin{remark} \label{rmk:matsumoto}
Given homology $3$-spheres $Y_0, Y_1$ and knots $K_0 \subset Y_0, K_1 \subset Y_1$, we say that $K_0$ and $K_1$ are \emph{homology concordant} if they cobound a smoothly embedded annulus in some homology cobordism $W$ between $Y_0$ and $Y_1$. Let $\hat \CC_\Z$ denote the group of homology concordance classes of knots in homology spheres that bound homology $4$-balls. Note that a knot $K \subset Y$ bounds a PL disk in some homology $4$-ball $X$ if and only if $K$ is homology concordant to some knot $K' \subset S^3$. Thus, Theorem \ref{thm:main} can be reformulated as saying that the natural inclusion $\CC_\Z \to \hat \CC_\Z$ is not surjective. This answers a question posed by Matsumoto \cite[Problem 1.31]{KirbyList}. (See also \cite[Proposition 6.3]{DavisRaySatellite}.)
\end{remark}

In order to prove Theorem \ref{thm:nonsurjective}, we recall two concordance invariants arising from the knot Floer complex of a knot $K \subset S^3$ \cite{OSzKnot, RasmussenThesis}. The invariant $\tau(K) \in \Z$, defined by Ozsv\'ath and Szab\'o \cite{OSz4Genus}, provides a lower bound for the smooth rational homology $4$-ball genus of $K$ (the minimum genus of a properly embedded surface in a rational homology $4$-ball with boundary $K$) and is additive under connected sum; as a result, it descends to a group homomorphism $\CC_\Q \to \Z$. More recently, Hom \cite{HomTau} defined an invariant $\epsilon(K) \in \{-1,0,1\}$, which together with $\tau(K)$ determines the value of $\tau$ for all cables of $K$. As a result, $\epsilon(K)=0$ whenever $K$ is smoothly $\Q$--slice.\footnote{In \cite{HomTau}, this result is only stated when $K$ is smoothly slice in $D^4$, but the same proof holds for any rational homology ball.}  The $\epsilon$ invariant is not a group homomorphism, but its behavior under connected sum is the same as that of the signs of real numbers under addition: positive plus positive equals positive, etc. (This property actually makes $\epsilon$ a rather powerful invariant; Hom has used it to find an infinite-rank direct summand of the group of topologically slice knots \cite{HomSummand}.)

Our technical main result, which immediately implies Theorem \ref{thm:nonsurjective}, is a formula for $\tau(Q(K))$ and $\epsilon(Q(K))$ in terms of $\tau(K)$ and $\epsilon(K)$, proved using bordered Heegaard Floer homology \cite{LOTBordered, LOTBimodules}:

\begin{theorem}\label{thm:Q}
Let $Q$ denote the Mazur pattern shown in Figure \ref{fig:Q}. For any knot $K \subset S^3$,
\begin{equation} \label{eq:tau(Q(K))-intro}
\tau(Q(K)) =
\begin{cases}
\tau(K) & \text{if } \tau(K) \le 0 \text{ and } \epsilon(K) \in \{0,1\} \\
\tau(K)+1 & \text{if } \tau(K) >0 \text{ or } \epsilon(K) = -1,
\end{cases}
\end{equation}
and
\begin{equation} \label{eq:epsilon(Q(K))-intro}
\epsilon(Q(K)) =
\begin{cases}
0 & \text{if } \tau(K) = \epsilon(K) = 0 \\
1 & \text{otherwise}.
\end{cases}
\end{equation}
In particular, $Q(K)$ is not rational homology concordant to any knot $L$ with $\epsilon(L)=-1$.
\end{theorem}

\begin{proof}[Proof of Theorem \ref{thm:main}]
If $L \subset S^3$ is any knot with $\epsilon(L)=-1$ (e.g., the left-handed trefoil), then $P = Q \conn {-L} \subset S^1 \times D^2$ satisfies the conclusion of Theorem \ref{thm:nonsurjective}. By Proposition \ref{prop:PL}, $J_L \subset \partial X_L$ does not bound a PL disk in any rational homology $4$-ball.

For the second part of the theorem, first note that for any pattern knot $P$ with winding number $1$, the Alexander polynomial of $J_P$ is equal to that of $\tilde P$. To see this, first note that the Kirby diagram in Figure \ref{fig:kirby}(a) presents $Y_P = \partial X_P$ as surgery on the two-component link $\tilde P \cup O \subset S^3$, where $O$ is the dotted unknot. Since $\lk(\tilde P, O) = 1$, we may find a Seifert surface $F$ for $\tilde P$ that meets $O$ in a single point. By puncturing $F$ at this point and capping it off in the surgery on $\tilde P$, we find a Seifert surface $F' \subset Y_P$ for $J_P$. It is easy to check that the Seifert forms of $F$ and $F'$ are equal, and hence $\Delta_{J_P} = \Delta_{\tilde P}$.

Now, if $P = Q \conn {-L}$ as above, then $\tilde P = -L$. In particular, if $L$ is a knot with $\Delta_L = 1$, then $\Delta_{J_P}=1$, and hence $J_P$ is topologically slice in $X_P$ by the famous theorem of Freedman and Quinn \cite[Theorem 11.7B]{FreedmanQuinn}. Hom \cite[Lemma 6.12]{HomComplex} showed that the negative, untwisted Whitehead double of the left-handed trefoil is an example of a knot $L$ with $\epsilon(L)=-1$ and $\Delta_L=1$, as required.
\end{proof}

Another consequence of Theorem \ref{thm:Q} is the following:

\begin{corollary} \label{cor:shrinking}
If $L \subset S^3$ is any knot with $\tau(L) > 0$, then $L$ is not rational homology cocordant to $Q^n(K)$ for any $K \subset S^3$ and $n > \tau(L)$.
\end{corollary}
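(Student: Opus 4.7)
The plan is to use Theorem~\ref{thm:Q} iteratively to determine $\tau(Q^n(K))$ in terms of $\tau(K)$ and $\epsilon(K)$, and then to invoke the fact (noted immediately before Theorem~\ref{thm:Q}) that $\tau$ descends to $\CC_\Q$. It suffices to show $\tau(Q^n(K)) \neq \tau(J)$ for every $K \subset S^3$ and every $n > \tau(J)$; rational homology concordance between $J$ and $Q^n(K)$ is then impossible.

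The observation that makes the iteration clean is that the formula for $\epsilon$ in Theorem~\ref{thm:Q} always outputs a value in $\{0,1\}$: after one application of $Q$ the input value $\epsilon = -1$ never recurs. So for $n \geq 2$ the $\tau$-formula collapses to
\[
\tau(Q^{n+1}(K)) = \begin{cases} \tau(Q^n(K)) & \text{if } \tau(Q^n(K)) \leq 0, \\ \tau(Q^n(K)) + 1 & \text{if } \tau(Q^n(K)) > 0. \end{cases}
\]
Thus once $\tau(Q(K))$ is known, the sequence $\{\tau(Q^n(K))\}_{n\geq 1}$ either stabilizes (when $\tau(Q(K)) \leq 0$) or strictly increases by one at each step (when $\tau(Q(K)) > 0$). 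A short case split on $(\tau(K), \epsilon(K))$ using the $\tau$-formula pins down the starting value $\tau(Q(K))$, and assembling these cases yields the explicit closed form
\[
\tau(Q^n(K)) = \begin{cases} \tau(K)+n & \text{if } \tau(K) > 0, \\ n & \text{if } \tau(K) = 0 \text{ and } \epsilon(K) = -1, \\ \tau(K)+1 & \text{if } \tau(K) < 0 \text{ and } \epsilon(K) = -1, \\ \tau(K) & \text{if } \tau(K) \leq 0 \text{ and } \epsilon(K) \in \{0,1\}, \end{cases}
\]
valid for all $n \geq 1$.

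Setting $m = \tau(J) \geq 1$ and $n \geq m+1$, I would finish by checking in each of these four cases that $\tau(Q^n(K)) \neq m$: the first two cases give $\tau(Q^n(K)) \geq n > m$, while the last two give $\tau(Q^n(K)) \leq 0 < m$. I do not anticipate any genuine obstacle; the one point worth flagging is the boundary subcase $\tau(K) = 0$, $\epsilon(K) = -1$, in which $\tau(Q^n(K)) = n$ attains the value $\tau(J)$ exactly when $n = \tau(J)$. This is precisely why the statement requires the strict inequality $n > \tau(J)$ rather than $n \geq \tau(J)$.
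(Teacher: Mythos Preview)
Your proposal is correct and follows essentially the same approach as the paper: both derive the identical closed-form expression for $\tau(Q^n(K))$ by induction using Theorem~\ref{thm:Q}, and then observe that this value can never lie in $\{1,\dots,n-1\}$. Your write-up is slightly more explicit about why the induction works (the key point that $\epsilon(Q(K))$ never equals $-1$) and about the boundary case $n=\tau(J)$, but there is no substantive difference.
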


\begin{proof}
Induction using Theorem \ref{thm:Q} shows that for any $n \ge 1$,
\begin{equation} \label{eq:iterate}
\tau(Q^n(K)) = \begin{cases}
\tau(K) & \text{if } \tau(K) \le 0 \text{ and } \epsilon(K) \in \{0,1\} \\
\tau(K) + 1 & \text{if } \tau(K) < 0 \text{ and } \epsilon(K) = -1 \\
\tau(K) + n & \text{if } \tau(K) = 0 \text{ and } \epsilon(K) = -1 \text{, or if } \tau(K) > 0.
\end{cases}
\end{equation}
In particular, for $n \ge 2$, $\tau(Q^n(K))$ cannot equal any number in $\{1, \dots, n-1\}$.
\end{proof}

Corollary \ref{cor:shrinking} implies that the images of the iterated satellite operators $Q^n$ (seen as functions on any of the groups $\CC$, $\CC_{\ex}$, $\CC_\Z$, or $\CC_\Q$) are strictly decreasing:
\begin{equation} \label{eq:shrinking}
\im(Q) \supsetneq \im(Q^2) \supsetneq \im(Q^3) \supsetneq \cdots.
\end{equation}
Cochran, Davis, and Ray \cite{CochranDavisRayInjectivity} showed that any pattern $P$ with winding number $\pm 1$ induces an injection on $\CC_{\Z}$, and any pattern $P$ with \emph{strong winding number $\pm 1$} (i.e., for which $\pi_1( \partial(S^1 \times D^2))$ normally generates $\pi_1(S^1 \times D^2 \minus \nbd(P))$) induces an injection on $\CC_{\ex}$. Moreover, Cochran and Harvey \cite{CochranHarveyGeometry} showed that such satellite operators are (quasi-)isometries with respect to certain natural metrics on each of the concordance groups. Note that the Mazur pattern $Q$ has strong winding number $1$ since $\tilde Q$ is the unknot \cite[Proposition 2.1] {CochranDavisRayInjectivity}. Therefore, \eqref{eq:shrinking} can perhaps be seen as an example of fractal structure in the concordance groups.

The operator $Q$ also appears in work of Cochran, Franklin, Hedden, and Horn \cite{CochranFranklinHeddenHorn}, who used it to give the first known examples of non-concordant knots whose $0$-surgeries are homology cobordant rel meridians. Specifically, they showed that the $0$-surgeries on $K$ and $Q(K)$ are homology cobordant rel meridians for any knot $K$, while there exist knots for which $\tau(K) \ne \tau(Q(K))$. Ray \cite{RayIterates} extended this argument to show that for such knots, all of the iterates $Q^n(K)$ $(n \in \N)$ are distinct in concordance. The formula for $\tau(Q(K))$ given above confirms and strengthens both of these results. (See Remark \ref{rmk:CFHH} for more details.)

We assume throughout the paper that the reader is familiar with knot Floer homology \cite{OSzKnot}, sutured Floer homology \cite{JuhaszSutured}, and bordered Heegaard Floer homology \cite{LOTBordered, LOTBimodules}. (For a quick summary of the latter, see the author's exposition in \cite[Section 2]{LevineDoublingOperators}.) All Floer homology groups are taken with coefficients in $\F = \Z/2\Z$. In Section \ref{sec:alexander}, we discuss the role of relative spin$^c$ structures in the bordered theory, with an eye toward computations of knot Floer homology for satellite knots. In Section \ref{sec:2bridge}, we compute the bordered Floer homology of $S^1 \times D^2 \minus \nbd(Q)$, making use of Lipshitz, Ozsv\'ath, and Thurston's arc-slides algorithm \cite{LOTFactoring}, as implemented in Python by Bohua Zhan \cite{ZhanComputations}. We then use this computation to determine the values of $\tau$ for $Q(K)$ (Section \ref{sec:tau}) and for the $(2,1)$ and $(2,-1)$ cables of $Q(K)$ (in Section \ref{sec:epsilon}), and finally deduce $\epsilon(Q(K))$ using Hom's formula for $\tau$ of cables \cite{HomTau}, leading to the proof of Theorem \ref{thm:Q}.

\subsection*{Acknowledgments}

The author is deeply grateful to Bohua Zhan for making available his remarkable software package for computing Heegaard Floer homology; to Matthew Hedden, Jen Hom, Robert Lipshitz, Peter Ozsv\'ath, Mark Powell, Vinicius Ramos, Arunima Ray, and Danny Ruberman for many interesting conversations; and to the referee for helpful suggestions.

\section{Alexander gradings in bordered Floer homology} \label{sec:alexander}

In this section, we elaborate on the pairing theorem for knot Floer homology given by Lipshitz, Ozsv\'ath, and Thurston \cite[Theorem 11.21]{LOTBordered}. Specifically, we will show that bordered Floer homology determines the absolute Alexander grading on the knot Floer homology of a knot in a manifold obtained by gluing, not just the relative Alexander grading as was originally stated. The most important result is Proposition \ref{prop:Aglue}, which provides a useful technique for computing Alexander gradings in the knot Floer homology of satellite knots. (See Remark \ref{rmk:HomPetkova} regarding other strategies for such computations used by Petkova \cite{PetkovaCables}, Hom \cite{HomTau}, and the author \cite{LevineDoublingOperators}.)

\subsection{The knot Floer complex and \texorpdfstring{$\tau(K)$}{\texttau(K)}} \label{sec:HFK}

We begin by recalling some basics concerning knot Floer homology. For simplicity, suppose that $K$ is a knot in a homology sphere $Y$. In this discussion, we shall use the convention for relative spin$^c$ structures used in sutured Floer homology \cite{JuhaszSutured}. Specifically, let $X_K = Y \minus \nbd(K)$, equipped with a pair of meridional sutures $\Gamma$ on the boundary. We fix a vector field $\vec v$ along $\partial X_K$ that points into $X_K$ along $R_-(\Gamma)$, parallel to $\partial X_K$ and transverse to the sutures along $\Gamma$, and out of $X_K$ along $R_+(\Gamma)$. A \emph{relative spin$^c$ structure} is a homology class of nonvanishing vector fields on $X_K$ that restrict to $\vec v$ on $\partial X_K$. (Here, two vector fields on $X_K$ are \emph{homologous} if they are homotopic outside of finitely many balls in $X_K$; the homotopies are required to be fixed on $\partial X_K$.) The set of relative spin$^c$ structures is denote $\ul\Spin^c(Y,K)$ and is an affine space for $H^2(X_K, \partial X_K)$, which by excision is isomorphic to $H^2(Y,K)$. Let $\PD[\mu] \in H^2(X_K, \partial X_K)$ denote the Poincar\'e--Lefschetz dual of the meridian of $K$.\footnote{A different convention also appears in the literature: specifically, Ozsv\'ath and Szab\'o \cite{OSzRational} define $\vec v$ to point outward on all of $\partial X_K$. The only difference between the two conventions is the formula for the Alexander grading in terms of the first Chern class; using the other convention,
the right side of \eqref{eq:absalex} should have an additional term of $-\frac12 \gen{\PD[\mu], [F]}$. Our convention agrees with the convention for spin$^c$ structures in sutured Floer homology \cite{JuhaszSutured}; it seems to behave more naturally with respect to bordered constructions.}

Given a doubly-pointed Heegaard diagram $(\HH, z, w)$ presenting $K$, the \emph{knot Floer complex} $\CFKm(\HH)$ is freely generated over $\F[U]$ by points $\x \in \T_\alpha \cap \T_\beta$, with differential given by
\[
\partial(\x) = \sum_{\y \in \mathfrak{S}(\HH)} \sum_{\substack{\phi \in \pi_2(\x,\y) \\ \mu(\phi)=1 }} \# (\hat\MM(\phi)) \, U^{n_w(\phi)} \y.
\]
Each generator $\x$ has an associated relative spin$^c$ structure in $\ul\Spin^c(Y,K)$, denoted $\srel_{w,z}(\x)$; for any $\phi \in \pi_2(\x,\y)$, we have
\begin{equation} \label{eq:srel-shift-closed}
\srel_{w,z}(\x) - \srel_{w,z}(\y) = (n_z(\phi) - n_w(\phi)) \PD[\mu]
\end{equation}
\cite[Lemma 2.5]{OSzKnot}. The \emph{Alexander grading} of a generator is defined as
\begin{equation} \label{eq:absalex}
A(\x) = \frac12 \gen{c_1(\srel_{w,z}(\x)), [F]},
\end{equation}
where $[F] \in H_2(X_K, \partial X_K)$ denotes the homology class of a Seifert surface for $K$. We extend this grading to $\CFKm(\HH)$ by setting $A(U^n \cdot \x) = A(\x) - n$; \eqref{eq:srel-shift-closed} shows that $A$ determines a filtration on $\CFKm(\HH)$. (Alternately, we may define $\srel_{w,z}(U^n \cdot \x) = \srel_{w,z}(\x) - n \PD[\mu]$, and simply view $\CFKm(\HH)$ as being filtered by the set of relative spin$^c$ structures, ordered by the action of $\PD[\mu]$.)

The filtered chain homotopy type of $\CFKm(\HH)$ is an invariant of the isotopy class of $K$; any complex of this homotopy type will be denoted $\CFKm(Y,K)$, and the grading by spin$^c$ structures is denoted $\srel_{Y,K}$ (in the absence of an actual Heegaard diagram). The associated graded complex of $\CFKm(Y,K)$ is denoted $\gCFKm(Y,K)$; the differential counts disks that avoid the basepoint $z$. The homology of $\gCFKm(Y,K)$ is denoted $\HFKm(Y,K)$; as an $\F$--vector space, it decomposes by Alexander grading as
\[
\HFKm(Y,K) = \bigoplus_{s \in \Z} \HFKm(Y,K,s),
\]
with the action of $U$ taking $\HFKm(Y,K,s)$ to $\HFKm(Y,K,s-1)$.

For any knot $K \subset S^3$, $\HFKm(S^3,K)$ is (non-canonically) isomorphic to the direct sum of $\F[U]$ and a finitely-generated, torsion $\F[U]$--module. The invariant $\tau(K)$ is equal to
\begin{equation}
\tau(K) = - \max\{ s \mid U^n \cdot \HFKm(S^3,K,s) \ne 0 \text{ for all } n \ge 0\}.
\end{equation}
In other words, $\tau(K)$ is minus the Alexander grading of $1 \in \F[U] \subset \HFKm(S^3,K)$. (See \cite[Lemma A.2]{OSzThurstonLegendrian} for the proof that this agrees with the original definition of $\tau$ in terms of the filtration on $\CFK(S^3,K)$.) Ozsv\'ath and Szab\'o proved that if $X$ is a rational homology $4$-ball with boundary $S^3$, and $K$ is the boundary of a smoothly embedded surface in $X$ of genus $g$, then $\abs{\tau(K)} \le g$ \cite{OSz4Genus}. Since $\tau$ is additive under connected sums, it descends to a homomorphism $\CC_\Q \to \Z$.

\subsection{Relative \texorpdfstring{spin$^c$}{spin-c} structures on bordered manifolds} \label{sec:vector}

We now turn to bordered Floer homology \cite{LOTBordered}. Let $\ZZ = (Z, \a, M, z)$ be a pointed matched circle of genus $k$. (Here, $Z$ is an oriented circle, $\a$ is a set of $4k$ points in $Z$, $M\co \a \to \{1, \dots, 2k\}$ is a two-to-one function, and $z \in Z \minus \a$.) Let $F(\ZZ)$ denote the surface associated to $\ZZ$. The surface $F(\ZZ)$ admits a handle decomposition with a single $0$-handle $\Delta$ whose boundary is identified with $Z$; $2k$ $1$-handles whose feet are at the points of $\a$, paired according to $M$; and a single $2$-handle. Let $\AA(\ZZ)$ denote the bordered algebra associated to $\ZZ$.

We shall make use of Huang and Ramos's construction of a topological grading on bordered Floer homology \cite{HuangRamosBordered}. In this discussion, we refer to a smooth section of the bundle $TF(\ZZ) \oplus \ul\R$ as a \emph{vector field along $F(\ZZ)$}, where $\ul\R$ is a trivial real line bundle over $F(\ZZ)$ equipped with a choice of orientation. For any $k$-element subset $\s \subset \{1,\dots, 2k\}$, we fix a nonvanishing vector field $\vec v_\s$ along $F(\ZZ)$ according to the construction given in \cite[Definition 2.1]{HuangRamosBordered}.
For subsets $\s,\t \subset \{1,\dots, 2k\}$ of order $k$, let $\GG(\ZZ, \s, \t)$ denote the set of homotopy classes of vector fields on $F(\ZZ) \times I$ restricting to $\s$ on $F(\ZZ) \times \{0\}$ and to $\t$ on $F(\ZZ) \times \{1\}$, and let $\ul\Spin^c(\ZZ, \s, \t)$ denote the set of homology classes of such vector fields. The latter is an affine set for $H^2(F(\ZZ) \times I, F(\ZZ) \times \partial I) \cong H_1(F(\ZZ) \times I) \cong H_1(F(\ZZ))$. There is a free action of $\Z$ on $\GG(\ZZ, \s, \t)$, where the action of $n \in \Z$ is denoted $[\vec v] \mapsto \lambda^n \cdot [\vec v]$, whose quotient map is precisely the forgetful map $\Phi_{\s,\t}\co \GG(\ZZ, \s, \t) \to \ul\Spin^c(\ZZ, \s, \t)$. Let
\[
\GG(\ZZ) = \coprod_{\substack{\s, \t \subset \{1, \dots, 2k\} \\ \abs{\s} = \abs{t}=k}} \GG(\ZZ, \s, \t)
\quad \text{and} \quad
\ul\Spin^c(\ZZ) = \coprod_{\substack{\s, \t \subset \{1, \dots, 2k\} \\ \abs{\s} = \abs{t}=k}} \ul\Spin^c(\ZZ, \s, \t),
\]
each of which equipped with a groupoid structure in which multiplication is given by concatenation in the $I$ factor; combine the forgetful maps $\Phi_{\s,\t}$ into a single map $\Phi$.\footnote{Huang and Ramos refer to the former as $G(\ZZ)$, but we prefer the notation $\GG(\ZZ)$ to avoid confusion with the grading group given by Lipshitz, Ozsv\'ath, and Thurston \cite[Section 3.3.2]{LOTBordered}.} Huang and Ramos define a grading $\gr$ on $\AA(\ZZ)$ taking values in $\GG(\ZZ)$; they also show that there is a homomorphism $\FF \co \GG(\ZZ) \to G'(\ZZ)$, where $G'(\ZZ)$ is the grading group from \cite[Section 3.3.1]{LOTBordered}, under which their grading agrees with the original grading on $\AA(\ZZ)$. For $a \in \AA(\ZZ)$, let $\srel(a) = \Phi(\gr(a)) \in \ul\Spin^c(\ZZ)$.

Next, let $Y$ be a bordered $3$-manifold with boundary parametrized by $F(\ZZ)$. We identify $TY|_{F(\ZZ)}$ with $TF(\ZZ) \oplus \ul\R$, where the outward normal is mapped to the positive $\ul\R$ direction. For each $k$-element subset $\s \subset \{1, \dots, 2k\}$, let $\Vect(Y,\s)$ and $\ul\Spin^c(Y, \s)$ denote the set of homotopy classes and homology classes, respectively, of nonvanishing vector fields on $Y$ restricting to $v_{\s}$. Elements of $\ul\Spin^c(Y)$ are called \emph{relative spin$^c$ structures} (relative to $\vec v_{\s}$); note that $\ul\Spin^c(Y,\s)$ is an affine set for $H^2(Y,\partial Y;\Z)$. Let $\Phi_\s \co \Vect(Y,\s) \to \ul\Spin^c(Y, \s)$ denote the forgetful map. Once again, there is an action of $\Z$ on $\Vect(Y,\s)$ (not necessarily free) whose quotient map is precisely $\Phi_\s$. Define
\[
\SS(Y) = \coprod_{\substack{\s \subset \{1, \dots, 2k\} \\ \abs{\s}=k}} \Vect(Y,\s) \quad \text{and} \quad
\ul\Spin^c(Y) = \coprod_{\substack{\s \subset \{1, \dots, 2k\} \\ \abs{\s}=k}} \Spin^c(Y,\s),
\]
and combine the maps $\Phi_\s$ into a single map $\Phi$.
The groupoid $\GG(\ZZ)$ acts on $\SS(Y)$ from the right by concatenation, and the action descends to an action of $\ul\Spin^c(\ZZ)$ on $\ul\Spin^c(Y)$. (In a similar manner, if $\partial Y$ is parametrized by $-F(\ZZ)$, then $\GG(\ZZ)$ and $\ul\Spin^c(\ZZ)$ act on $\SS(Y)$ and $\ul\Spin^c(Y)$ from the left.)

Let $\HH$ be a bordered Heegaard diagram for $Y$. For each generator $\x \in \mathfrak{S}(\HH)$, let $o(\x)$ be the $k$-element subset of $\{1,\dots,2n\}$ corresponding to the arcs occupied by $\x$. Lipshitz, Ozsv\'ath, and Thurston \cite[Section 4.3]{LOTBordered} construct a nowhere-vanishing vector field $\vec v_z(\x)$ on $Y$ whose restriction to a collar neighborhood of $\partial Y$ is $\vec v_{o(\x)}$, and define the relative spin$^c$ structure associated to $\x$, denoted $\srel_z(\x) \in \ul\Spin^c(Y)$, to be the homology classes of $v_{\x}$. Subsequently, Huang and Ramos defined $\gr(\x) \in S(Y)$ to be the homotopy class of $\vec v_z(\x)$, and proved that this assignment determines a grading on the bordered invariants $\CFA(\HH)$ and $\CFD(\HH)$ that is compatible with the algebraic structures of those invariants. That is, if in $\CFA(\HH)$ the generator $\y$ appears in $m_{k+1}(\x, a_1, \cdots, a_k)$, then
\begin{equation} \label{eq:gr-shift}
\gr(\y) = \lambda^{k-1} \gr(\x) \cdot \gr(a_1)\cdots \gr(a_k).
\end{equation}
It follows that
\begin{equation} \label{eq:srel-shift}
\srel_z(\y) = \srel_z(\x) \cdot \srel(a_1) \cdots \srel(a_k).
\end{equation}
A similar statement holds for $\CFD$; see \cite[Theorem 1.3]{HuangRamosBordered}.

Moreover, the maps used in \cite{LOTBordered} to prove the invariance of bordered Heegaard Floer homology are in fact graded chain homotopy equivalences. This is not spelled out explicitly in \cite{HuangRamosBordered}, but it follows along the same lines as the proof of invariance for Huang and Ramos's earlier work \cite{HuangRamosHF}, modified for the bordered setting. It follows that the \emph{graded} chain homotopy type of $\CFA(\HH)$ or $\CFD(\HH)$, where the grading has values in $\SS(Y)$, is an invariant of $Y$; by abuse of notation, we refer to any $\AA_\infty$-module or type-$D$ structure with this graded chain homotopy type as $\CFA(Y)$ or $\CFD(Y)$, respectively. Note also that the chain homotopy equivalences used in the ``edge reduction'' procedure for simplifying a chain complex, $\AA_\infty$-module, or type-$D$ structure (see \cite[Section 2.6]{LevineDoublingOperators}) are graded.

For the present purposes, the upshot of this discussion is that each homogeneous generator $x$ of (a module representing) $\CFA(Y)$ or $\CFD(Y)$ has an associated relative spin$^c$ structure, denoted $\srel_Y(x)$, which is obtained by applying the forgetful map $\Phi$ to $\gr(x)$. This is true even when we are working with a representative for $\CFA(Y)$ or $\CFD(Y)$ that is not actually the complex associated to a Heegaard diagram, a fact that was not fully spelled out in \cite{LOTBordered}. (We do not not need to make use of the Maslov component of the grading in this paper.)

\subsection{Knots in bordered 3-manifolds} \label{sec:borderedknot}

Next, we consider knots in bordered manifolds. As explained in \cite[Section 11.4]{LOTBordered}, a bordered Heegaard diagram $(\HH,z)$ for $Y$ together with a second basepoint $w$ in the interior of the Heegaard surface determines a knot $K \subset Y$, a segment of which lies in $\partial Y$. (We refer to $K$ as a \emph{based knot}.) To be precise, fix a Riemannian metric $g$ and a self-indexing Morse function $f$ on $Y$ that are compatible with the Heegaard diagram $\HH$ (in the sense of \cite[Section 4.8]{LOTBordered}). The basepoints $z$ and $w$ each determine flowlines $\gamma_z$ and $\gamma_w$ connecting the unique index-$0$ and index-$3$ critical points of $f$; note that $\gamma_z \subset \partial Y$. If we orient each of these flowlines upward, the knot $K$ is defined to be $\gamma_w - \gamma_z$. (For an alternate description, let $t_\alpha$ be an arc from $z$ to $w$ in the complement of the $\alpha$ curves, and let $t_\beta$ be an arc from $w$ to $z$ in the complement of the $\beta$ curves; we obtain $K$ by pushing $t_\alpha$ into the $\alpha$ handlebody and $t_\beta$ into the $\beta$ handlebody.)

Lipshitz, Ozsv\'ath, and Thurston define type $A$ and $D$ modules $\CFAm(\HH, z,w)$ and $\CFDm(\HH, z, w)$ over the ground ring $\F[U]$, where the original definitions of the differentials on $\CFA(\HH,z)$ and $\CFD(\HH,z)$ are modified so that a holomorphic disk with multiplicity $m$ at $w$ contributes a factor of $U^m$.
That is, in $\CFAm(\HH,z,w)$, the $\AA_\infty$ multiplications are given by
\begin{equation} \label{eq:CFA-def}
m_{n+1}(\x, a(\bm\rho_1), \dots, a(\bm\rho_n)) = \sum_{\y \in \mathfrak{S}(\HH)}
\sum_{\substack{B \in \pi_2(\x,\y) \\ \ind(B, \vec{\bm\rho}) =1}} \# \MM^B( \x, \y; \bm\rho_1, \dots, \bm\rho_n)
\, U^{n_w(B)} \y,
\end{equation}
where all notation is as in \cite[Section 7]{LOTBordered}.
The chain homotopy types of $\CFAm(\HH,z,w)$ and $\CFDm(\HH,z,w)$ are invariants of the isotopy class of $K$, where a segment of $K$ is constrained to lie in $\partial Y$ throughout the isotopy. We shall explain how to construct relative spin$^c$ gradings on these invariants. We focus on $\CFAm$; the case of $\CFDm$ is similar.

The complement $Y_K$ of a regular neighborhood of $K$ is naturally a \emph{bordered sutured manifold}, in the sense of Zarev \cite[Definition 3.5]{ZarevBordered}. Specifically, let $F'(\ZZ)$ be $F(\ZZ)$ minus its $0$- and $2$-handles plus an an annulus connecting the two boundary circles, with a pair of parallel sutures $\Gamma$ contained in this annulus. The sutures divide $F'(\ZZ)$ into regions $R_+$ and $R_-$ with $\chi(R_+)=0$ and $\chi(R_-) = -2k$; the pointed matched circle $\ZZ$ determines a parametrization of $R_-$. The boundary of $\partial Y_K$ is then naturally identified with $F'(\ZZ)$. Moreover, $Y_K$ is represented by the bordered sutured Heegaard diagram $\HH'$ obtained from $\HH$ by deleting a neighborhood of $w$. Note that the generating sets $\mathfrak{S}(\HH)$ and $\mathfrak{S}(\HH')$ are the same. Moreover, the bordered sutured invariant $\widehat{BSA}(\HH')$ defined by Zarev is precisely equal to the quotient $\CFA(\HH,z,w) = \CFAm(\HH,z,w)/(U=0)$.

As noted by Huang and Ramos \cite[Remark 1.5]{HuangRamosBordered}, the discussion from the previous section carries through for bordered sutured manifolds. Just as in the absolute case, for each $k$-element subset $\s \subset \{1, \dots, 2k\}$, we fix a nonvanishing vector field $\vec v_\s'$ along $F'(\ZZ)$; define groupoids of homology and homotopy classes of vector fields on $F'(\ZZ) \times I$, denoted $\tilde\GG(\ZZ)$ and $\tilde{\ul\Spin}{}^c(\ZZ)$, analogous to the constructions in the previous section. The algebra $\AA(\ZZ)$ has a grading $\tilde{\gr}$ valued in $\tilde \GG(\ZZ)$; the image of $\tilde\gr(a)$ in $\tilde{\ul\Spin}{}^c(Y)$ is denoted $\tilde \srel(a)$. Let $\Vect(Y,K,\s)$ and $\ul\Spin^c(Y,K,\s)$ be the sets of homotopy classes and homology classes, respectively, of vector fields on $Y_K$ extending $\vec v_{\s}'$, and let
\[
\SS(Y,K) = \coprod_{\s} \Vect(Y,K,\s) \quad \text{and} \quad \ul\Spin^c(Y,K) = \coprod_{\s} \ul\Spin^c(Y,K,\s);
\]
these admit actions by $\tilde\GG(\ZZ)$ and $\tilde{\ul\Spin}{}^c(\ZZ)$, respectively. Then $\CFA(\HH,z,w)$ has a grading $\tilde{\gr}$ valued in $\SS(Y,K)$; the image of $\widetilde{\gr}(\x)$ in $\ul\Spin^c(Y,K)$ is denoted $\srel_{w,z}(\x)$.

We now show that $\CFAm(\HH,z,w)$ is also graded by relative spin$^c$ structures. For each $\s$, $\ul\Spin^c(Y,K,\s)$ is an affine set for $H^2(Y_K, \partial Y_K)$. Let $\PD[\mu] \in H^2(Y, \nbd(K)) \cong H^2(Y_K, \partial Y_K)$ denote the Poincar\'e--Lefschetz dual of the meridian $[\mu] \in H_1(Y_K;\Z)$, which generates the kernel of the restriction map $H^2(Y,\nbd(K)) \to H^2(Y)$. There are maps
\[
\Gamma_\s \co \ul\Spin^c(Y,K,\s) \to \ul\Spin^c(Y,\s),
\]
given by extending the vector fields over $\nbd(K)$ in a canonical way, whose fibers are the orbits of the action of $\PD[\mu]$; the maps $\Gamma_\s$ combine to give a single map
\[
\Gamma \co \ul\Spin^c(Y,K) \to \ul\Spin^c(Y),
\]
satisfying $\Gamma(\srel_{w,z}(\x)) = \srel_z(\x)$. (See \cite[Section 2.2]{OSzRational} for more details of the analogous construction for knots in closed manifolds.) If $B \in \pi_2(\x,\y)$ is a class (in $\HH$) with associated algebra elements $a_1, \dots, a_n$, then
\[
\Gamma(\srel_{w,z})(\y) = \Gamma( \srel_{w,z}(\x) \cdot \tilde{\srel}(a_1) \cdots \tilde{\srel}(a_n) )
\]
by \eqref{eq:srel-shift}, so $\srel_{w,z}(\y)$ and $\srel_{w,z}(\x) \cdot \tilde{\srel}(a_1) \cdots \tilde{\srel}(a_n)$ must differ by an element of $\PD[\mu]$; more precisely, a bordered analogue of \cite[Lemma 2.5]{OSzKnot} says that
\[
\srel_{w,z}(\x) \cdot \tilde{\srel}(a_1) \cdots \tilde{\srel} (a_n) - \srel_{w,z}(\x) = -n_w(B) \PD[\mu].
\]
Therefore, if we define the relative spin$^c$ grading on $\CFAm(\HH,z,w)$ by
\[
\srel_{w,z}(U^n \cdot \x) = \srel_{w,z}(\x) - n \PD[\mu],
\]
the $\AA_\infty$ multiplications respect the spin$^c$ structures just as in \eqref{eq:srel-shift}. The graded chain homotopy type of $\CFAm(\HH,z,w)$ is an invariant of the knot $K$ (once again, under isotopies leaving a segment of $K$ fixed on $\partial Y$). We denote any representative of this homotopy type by $\CFAm(Y,K)$, and refer to its spin$^c$ grading as $\srel_{Y,K}$.

We may now state the graded version of the pairing theorem for knot Floer homology. First, suppose $Y$ is a homology $3$-sphere with a bordered decomposition $Y_1 \cup_{F(\ZZ)} Y_2$, and $K$ is a based knot in $Y_1$, which may be viewed as a knot in $Y$. There is a gluing map
\[
\Psi \co \coprod_{\substack{\s \subset \{1, \dots, 2k\} \\ \abs{\s} =k }} \left( \ul\Spin^c(Y_1,K,\s) \times \ul\Spin^c(Y_2,\s) \right) \to \ul\Spin^c(Y,K).
\]
The version of the pairing theorem that we shall use states:
\begin{theorem} \label{thm:pairing}
There is a homotopy equivalence
\[
\gCFKm(Y,K) \simeq \CFAm(Y_1,K) \boxtimes \CFD(Y_2)
\]
that respects the grading by relative spin$^c$ structures, in the sense that for homogeneous elements $x_1 \in \CFAm(Y_1,K)$ and $x_2 \in \CFD(Y_2)$ whose idempotents agree, we have
\begin{equation} \label{eq:srel-gluing}
\srel_{Y,K}(x_1 \otimes x_2) = \Psi(\srel_{Y_1,K}(x_1), \srel_{Y_2}(x_2)).
\end{equation}
\end{theorem}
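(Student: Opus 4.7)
The plan is to start from the ungraded pairing theorem of Lipshitz--Ozsváth--Thurston (\cite[Theorem 11.21]{LOTBordered}) and upgrade the homotopy equivalence so that it respects the full relative spin$^c$ grading, rather than only the relative Alexander grading. Concretely, I would choose a bordered Heegaard diagram $(\HH_1, z, w)$ for $(Y_1, K)$ (with $z$ on $\partial Y_1$ and $w$ interior) and a bordered Heegaard diagram $\HH_2$ for $Y_2$, and glue them to form a doubly-pointed Heegaard diagram $\HH = \HH_1 \cup_\partial \HH_2$ for $(Y,K)$. The existing pairing theorem produces a chain homotopy equivalence $\gCFKm(\HH) \simeq \CFAm(\HH_1) \boxtimes \CFD(\HH_2)$, realized on the nose by matching the generator $\x = \x_1 \cup \x_2$ with $\x_1 \otimes \x_2$ whenever the idempotents agree. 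So the real content of Theorem~\ref{thm:pairing} is a statement at the level of generators.

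Next I would verify the spin$^c$ identity on generators using the vector-field description of relative spin$^c$ structures reviewed in Sections~\ref{sec:vector}--\ref{sec:borderedknot}. For $\x = \x_1 \cup \x_2$ in $\mathfrak{S}(\HH)$, one builds the canonical vector field $\vec v_{w,z}(\x)$ on $Y_K$ as in \cite[Section~4.3]{LOTBordered} adapted to the two-basepoint setting; on the bordered side, $\vec v_z(\x_1)$ on $(Y_1)_K$ and $\vec v_z(\x_2)$ on $Y_2$ are built analogously. Because the construction of these vector fields is local over the Heegaard surface and compatible with the handle decomposition of the parametrizing surface, the restriction of $\vec v_z(\x_1)$ to $F'(\ZZ)$ is $\vec v_{o(\x_1)}'$ and the restriction of $\vec v_z(\x_2)$ to $-F(\ZZ)$ is $\vec v_{o(\x_2)}$; when $o(\x_1)=o(\x_2)$, these glue across the collar of the suture to produce a nowhere-vanishing vector field on $Y_K$ whose homology class is, by definition of $\Psi$, the element $\Psi(\srel_{Y_1,K}(\x_1), \srel_{Y_2}(\x_2))$. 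The resulting vector field is homologous to $\vec v_{w,z}(\x)$ because both agree with $\vec v_z(\x_i)$ on the respective pieces and any ambiguity lies within the collar, where nonvanishing extensions are unique up to homotopy.

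The step I expect to be the main obstacle is the compatibility of the two conventions for boundary vector fields: the closed boundary vector fields $\vec v_\s$ of \cite{HuangRamosBordered} live on $F(\ZZ)$, while the sutured boundary vector fields $\vec v_\s'$ live on $F'(\ZZ)$, and the two surfaces differ precisely by the $0$- and $2$-handle disks of $F(\ZZ)$ being replaced by an annular neck carrying the sutures. One must set up $\vec v_\s'$ so that it agrees with $\vec v_\s$ away from this neck and interpolates across the neck in a fixed model way, so that the concatenation of vector fields over the neck is unambiguously homologous to the closed-up vector field. Once this local model is pinned down, the gluing map $\Psi$ is well-defined on homology classes, and the identity \eqref{eq:srel-gluing} holds on generators essentially by construction.

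Finally, I would extend from generators to all of $\CFAm(Y_1,K)$ by invoking $\srel_{w,z}(U^n \cdot \x_1) = \srel_{w,z}(\x_1) - n\PD[\mu]$, which matches the corresponding relation on $\gCFKm(Y,K)$ and is transparent to $\Psi$ (which acts trivially on the $Y_2$ factor). Invariance under change of bordered Heegaard diagram is automatic from the fact, recorded in Section~\ref{sec:borderedknot}, that the homotopy equivalences underlying invariance of $\CFAm$ and $\CFD$ are graded, so the identity \eqref{eq:srel-gluing} persists for any model of $\CFAm(Y_1,K)$ and $\CFD(Y_2)$, including the edge-reduced complexes we will actually compute with in Sections~\ref{sec:2bridge}--\ref{sec:epsilon}.
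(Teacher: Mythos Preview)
Your proposal is correct and follows the same approach as the paper: invoke \cite[Theorem 11.21]{LOTBordered} for the homotopy equivalence and observe that \eqref{eq:srel-gluing} holds on generators because the vector fields defining $\srel_{w,z}(\x_1)$, $\srel_z(\x_2)$, and $\srel_{Y,K}(\x_1\cup\x_2)$ are constructed locally and glue by definition of $\Psi$. The paper's own proof is a two-line reference (``follows directly from the construction''), so your write-up is a faithful and more detailed unpacking of exactly that.
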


\begin{proof}
The existence of the homotopy equivalence is simply \cite[Theorem 11.21]{LOTBordered}, and \eqref{eq:srel-gluing} follows directly from the construction.
\end{proof}

Thus, the spin$^c$ grading on bordered Floer homology can be used to recover the \emph{absolute} Alexander grading on $\HFKm$, not just the relative grading (as stated in \cite{LOTBordered}). Moreover, similar pairing theorems also apply for computations using bimodules, all of which respect the grading by relative spin$^c$ structures.

\subsection{Satellite knots} \label{sec:satellite}

We now give a more concrete description of the way that bordered Floer homology determines the Alexander gradings on $\HFKm$ of satellite knots.

\begin{figure}
\labellist
 \pinlabel (a) at 0 115
 \pinlabel (b) at 144 115
 \small
 \pinlabel $z$ at 22 97
 \pinlabel $w$ at 59 37
 \pinlabel $1$ at 105 105
 \pinlabel $2$ at 105 15
 \pinlabel $3$ at 15 15
 \pinlabel $a$ [r] at 8 60
 \pinlabel {{\color{red} $\alpha_1^a$}} [l] at 110 77
 \pinlabel {{\color{red} $\alpha_2^a$}} [b] at 59 110
 \pinlabel {{\color{blue} $\beta_1$}} [b] at 59 60
 \pinlabel $z$ at 166 97
 \pinlabel $w$ at 203 16
 \pinlabel $1$ at 249 105
 \pinlabel $2$ at 249 15
 \pinlabel $3$ at 159 15
 \pinlabel $a$ [r] at 152 60
 \pinlabel $b$ [b] at 192 -3
 \pinlabel $c$ [b] at 213 -3
 \pinlabel {{\color{red} $\alpha_1^a$}} [l] at 254 77
 \pinlabel {{\color{red} $\alpha_2^a$}} [b] at 203 110
 \pinlabel {{\color{blue} $\beta_1$}} [b] at 237 60
 \endlabellist
\includegraphics{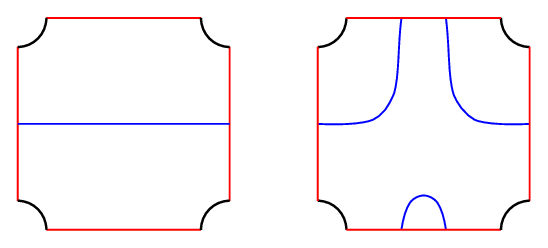}
\caption{Doubly-pointed bordered Heegaard diagrams for the knots $C$ (a) and $C_{2,1}$ (b) in the solid torus $V$. The boundary segments are labeled according to the convention for $\CFA$.}
\label{fig:cables}
\end{figure}

Let $V$ denote the solid torus $S^1 \times D^2$, equipped with the standard bordered structure described in \cite[Section 11.4]{LOTBordered}. That is, in any bordered Heegaard diagram for $V$, $\alpha_1^a$ represents a meridian $\mu_V = \{\pt\} \times \partial D^2$ and $\alpha_2^a$ represents a longitude $\lambda_V = S^1 \times \{\pt\}$. Let $P \subset V$ be a based knot in $V$, represented by a doubly-pointed bordered Heegaard diagram $(\HH, z, w)$ as above. Label the boundary regions of $\HH$ $R_0, R_1, R_2, R_3$ according to the conventions for $\CFA$. Two specific examples that will be useful below are represented by the genus-$1$ Heegaard diagrams in Figure \ref{fig:cables}: let $C$ be a copy of $S^1 \times \{\pt\}$, and let $C_{2,1}$ be a curve in $\partial V$ representing the homology class $2 \lambda_V + \mu_V$ (a $(2,1)$ curve).

Let $F_\mu = \{\pt\} \times D^2 \subset V$; the homology class of $F_\mu$ generates $H_2(V, \partial V)$. There is a periodic domain $\PP_\mu$ representing $[F_\mu]$, whose multiplicities in the four boundary regions are $0,0,1,1$, respectively. Set $m = n_w(\PP_\mu)$; this is the \emph{winding number} of $P$. Since $P$ is homologous in $V$ to $m \lambda_V$, there is an oriented surface $F_\lambda \subset V$ whose boundary is the union of $P$ and $m$ parallel copies of $-\lambda_V$.

For any knot $K \subset S^3$, let $X_K$ denote the exterior of $K$ equipped with the bordered structure given by the $0$-framing. Let $F_K$ be a Seifert surface for $K$, which represents a generator of $H_2(X_K, \partial X_K)$. The type-$D$ structure $\CFD(X_K)$ splits as a direct sum (of $\F$-vector spaces) $V_0 \oplus V_1$ corresponding to the two idempotents $\iota_0, \iota_1 \in \AA(T^2)$. (That is, $V_i = \iota_i \cdot \CFD(X_K)$.) By \cite[Proposition 11.19]{LOTBordered}, $V_0$ (together with its internal differential $D$) is chain homotopy equivalent to $\gCFK(S^3,K)$. Indeed, given a Heegaard diagram $\HH$ for $K$, $(V_0,D)$ is actually isomorphic to $\gCFK(\HH')$, where $\HH'$ is the complex obtained by gluing $\HH$ to the Heegaard diagram for $(V,C)$ in Figure \ref{fig:cables}(a). Moreover, this identification respects the gradings by relative spin$^c$ structures. Thus, $V_0$ admits an Alexander grading, which we shall denote by $A_K$. (In a similar manner, $V_1$ can be identified with the longitude Floer complex of $K$ \cite{EftekharyWhitehead}, which likewise admits an Alexander grading.)

When we form the union $S^3 = V \cup X_K$, the knot $P \subset V$ becomes the satellite $P(K)$. According to Theorem \ref{thm:pairing}, there is a chain homotopy equivalence
\begin{equation} \label{eq:gCFKm(S3,P(K))}
\gCFKm(S^3, P(K)) \simeq \CFAm(V,P) \boxtimes \CFD(X_K).
\end{equation}
Moreover, this identification determines the grading of $\gCFKm(S^3,P(K))$ by relative spin$^c$ structures, and thus the absolute Alexander grading, which we denote by $A_{P(K)}$.

A key tool that we will use in our computations in Sections \ref{sec:tau} and \ref{sec:epsilon} is the following:

\begin{proposition} \label{prop:Aglue}
Let $P \subset V$ be a based knot with winding number $m$. For each element $a \in \CFAm(V,P) \cdot \iota_0$ that is homogeneous with respect to the spin$^c$ grading, there exists a constant $C_a$ with the following property: For any knot $K \subset S^3$, and any homogeneous element $x \in \iota_0 \CFD(X_K)$, we have
\begin{equation} \label{eq:Aglue}
A_{P(K)}(a \otimes x) = m A_K(x) + C_a.
\end{equation}
\end{proposition}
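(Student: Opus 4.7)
The plan is to compute $A_{P(K)}(a \otimes x)$ by evaluating the first Chern class formula \eqref{eq:absalex} against an explicitly constructed Seifert surface. First, build a Seifert surface $F_{P(K)}$ for $P(K) \subset S^3$ by gluing pieces across the torus decomposition $S^3 = V \cup X_K$: inside $V$, use the surface $F_\lambda$ with $\partial F_\lambda = P \cup (-m \lambda_V)$ that exists because $P$ is homologous to $m \lambda_V$ in $V$; inside $X_K$, use $m$ parallel push-offs of the Seifert surface $F_K$ via the $0$-framing. Since the bordered identification carries $\lambda_V$ to the $0$-framed longitude of $K$, the longitudinal boundary components cancel, producing a properly embedded Seifert surface whose class satisfies $[F_{P(K)}] = [F_\lambda] + m [F_K]$ in the relevant Mayer--Vietoris sequence.

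Next, apply Theorem \ref{thm:pairing} to write $\srel_{w,z}(a \otimes x) = \Psi(\srel_{V,P}(a), \srel_{X_K}(x))$. The vector fields representing the two factors agree along the gluing torus by construction of $\Psi$, so a nonvanishing section of the orthogonal $2$-plane field can be assembled over $\partial F_{P(K)}$ from compatible sections over $\partial F_\lambda$ and the boundaries of the push-offs of $F_K$. The resulting Chern class obstruction splits additively:
\[
\gen{c_1(\srel_{w,z}(a \otimes x)), [F_{P(K)}]} = \gen{c_1(\srel_{V,P}(a)), [F_\lambda]} + m \gen{c_1(\srel_{X_K}(x)), [F_K]}.
\]
The identification $V_0 \cong \gCFK(S^3, K)$ described in Section \ref{sec:satellite} (which is the $P = C$ case of Theorem \ref{thm:pairing}) respects Alexander gradings, so $A_K(x) = \tf12 \gen{c_1(\srel_{X_K}(x)), [F_K]}$. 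Setting $C_a = \tf12 \gen{c_1(\srel_{V,P}(a)), [F_\lambda]}$, which depends only on $a$ and the pattern $P$, yields \eqref{eq:Aglue}.

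The main obstacle I anticipate is the careful justification of the additive splitting of the Chern class evaluation. The relative spin$^c$ structures on the two pieces are defined relative to different reference vector fields on their boundaries -- the bordered sutured $\vec v_\s'$ on $F'(\ZZ)$ for $V \minus \nbd(P)$ versus $\vec v_\s$ on $F(\ZZ)$ for $X_K$ -- so one must invoke the canonical extension map $\Gamma$ of Section \ref{sec:borderedknot} to match them along the $m$ longitudinal arcs where $F_{P(K)}$ meets the gluing torus. Once the sections of the $2$-plane field are arranged compatibly there, the additivity of Chern class obstructions over the decomposition of $F_{P(K)}$ is standard obstruction theory.
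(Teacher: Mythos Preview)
Your proposal is correct and follows essentially the same approach as the paper: construct a Seifert surface for $P(K)$ by gluing $F_\lambda$ to $m$ parallel copies of $F_K$ across the torus, invoke the gluing formula $\srel_{S^3,P(K)}(a\otimes x)=\Psi(\srel_{V,P}(a),\srel_{X_K}(x))$ from Theorem~\ref{thm:pairing}, split the Chern class evaluation additively, and set $C_a=\tfrac12\langle c_1(\srel_{V,P}(a)),[F_\lambda]\rangle$. The paper's own proof is in fact terser than yours---it simply asserts the additive splitting in one line---so your discussion of matching the reference vector fields via $\Gamma$ is more careful than what the paper provides, not less.
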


\begin{proof}
We may construct a Seifert surface $G$ for $P(K)$ as the union of $F_\lambda \subset V$ with $\abs{m}$ parallel copies of a Seifert surface $F_K$ for $K$. (If $m$ is negative, we take these copies of $F_K$ with reversed orientation.) The relative spin$^c$ structures $\srel(a) \in \ul\Spin^c(V, \partial V \cup K)$ and $\srel(x) \in \ul\Spin^c(X_K, \partial X_K)$ glue together to give a relative spin$^c$ structure $\srel(a \otimes x) \in \Spin^c(S^3, P(K))$. We then have:
\begin{align} \label{eq:Aglue-proof}
A_{P(K)}(a \otimes x) &= \frac12 \gen{c_1(\srel_{S^3,K}(a \otimes x)), [G]} \\
\nonumber &= \frac12 \gen{c_1(\srel_{V,P}(a)), [F_\lambda]} + \frac{m}{2} \gen{c_1(\srel_{X_K}(x)), [F_K]} \\
\nonumber &= \frac12 \gen{c_1(\srel_{V,P}(a)), [F_\lambda]} + m A_K(x).
\end{align}
Thus, we define $C_a = \frac12 \gen{c_1(\srel_{V,P}(a)), [F_\lambda]}$, which depends only on $a$ and not on the choice of $K$.
\end{proof}

The value of Proposition \ref{prop:Aglue} is that it enables us to compute Alexander gradings without using a Heegaard diagram. Specifically, in Section \ref{sec:tau} we will compute $\CFAm(V,Q)$, where $Q$ is the Mazur pattern knot in Figure \ref{fig:Q}, without keeping track of the relative spin$^c$ structures associated to the various elements. Since applying the satellite operation $Q$ to the unknot $O$ yields the unknot, $\CFAm(V,Q) \boxtimes \CFD(X_O)$ computes $\HFKm(S^3,O)$, which is simply $\F[U]$ generated by an element in Alexander grading $0$. This enables us to determine the constants $C_a$ associated to some generators $a \in \CFAm(V,Q)$. We can then use Proposition \ref{prop:Aglue} to determine the absolute Alexander gradings of the relevant generators of $\CFKm(S^3,Q(K))$ for any knot $K$, and this computation suffices to determine $\tau(Q(K))$. The same reasoning is used in Section \ref{sec:epsilon} to study $\CFAm(V,Q_{2,1})$, where $Q_{2,1}$ denotes the $(2,1)$ cable of $Q$.

\begin{remark} \label{rmk:HomPetkova}
Proposition \ref{prop:Aglue} is closely related to the strategy used by Hom in \cite[Section 4]{HomTau} for computing $\tau$ for cable knots. Given a doubly-pointed Heegaard diagram for a knot $K \subset S^3$, Ozsv\'ath and Szab\'o give an explicit formula for the Alexander grading of each generator in terms of topological data in the Heegaard diagram \cite[Equation 9]{OSzKnot}. When the Heegaard diagram is obtained by gluing together two bordered Heegaard diagrams, this formula splits into a sum of compositions from the two sides; this is precisely the sum in the second line of \eqref{eq:Aglue-proof}. In the setting of cabling, Hom computes the contribution from the $A$ side directly from a Heegaard diagram for the pattern knot and writes the contribution from the $D$ side as an explicit linear function of the Alexander grading in $\CFK(S^3,K)$; the sum of these contributions is precisely \eqref{eq:Aglue}. In our setting, because we are not computing $\CFAm$ directly from a Heegaard diagram, we are forced to determine the constant $C_a$ indirectly, as explained above.

Another strategy for determining absolute Alexander gradings, used by the author in \cite{LevineDoublingOperators} and by Petkova in \cite{PetkovaCables}, is first to compute the relative Alexander grading on $\CFKm(Y,P(K))$ as described in \cite{LOTBordered}, and then to pin down the absolute grading using the symmetry of knot Floer homology \cite[Equation 3]{OSzKnot}, which refines the symmetry of the Alexander polynomial. The disadvantage of this approach is that it requires careful consideration of all the generators of the tensor product complex and use of the non-abelian grading on bordered Floer homology, rather than only the generators that affect $\tau(P(K))$.
\end{remark}

\subsection{\texorpdfstring{$\CFD$}{CFD} of knot complements}

We now recall Lipshitz, Ozsv\'ath, and Thurston's formula for $\CFD(X_K)$ in terms of $\CFKm(S^3,K)$ \cite[Theorems 11.27 and A.11]{LOTBordered}, using some notation from \cite[Section 2.4]{HeddenLevineSplicing}. Let $C^- =\CFKm(S^3,K)$. The following is a slight enhancement of \cite[Proposition 2.5]{HeddenLevineSplicing}:

\begin{proposition} \label{prop:bases}
There exist a pair of bases $\{\tilde \xi_0, \dots, \tilde \xi_{2n}\}$ and $\{\tilde \eta_0, \dots, \tilde \eta_{2n}\}$ for $\CFKm(S^3,K)$ (over $\F[U]$) satisfying:

\begin{enumerate}
\item \label{item:vertsimp}
$\{\tilde\xi_0, \dots, \tilde\xi_{2n}\}$ is a vertically simplified basis, with a vertical arrow of length $k_j \ge 1$ from $\tilde\xi_{2j-1}$ to $\tilde\xi_{2j}$ for each $j = 1, \dots, n$.

\item \label{item:horizsimp}
$\{\tilde\eta_0, \dots, \tilde\eta_{2n}\}$ is a horizontally simplified basis, with a horizontal arrow of length $l_j \ge 1$ from $\tilde\eta_{2j-1}$ to $\tilde\eta_{2j}$ for each $j = 1, \dots, n$.

\item \label{item:epsilon}
If $\epsilon(K) = -1$, then $\tilde\xi_0 = \tilde\eta_1$ and $\tilde\eta_0 = \tilde\xi_1$. If $\epsilon(K) = 0$, then $\tilde\xi_0 = \tilde\eta_0$. If $\epsilon(K) = 1$, then $\tilde\xi_0 = \tilde\eta_2$ and $\tilde\eta_0 = \tilde\xi_2$.

\item \label{item:changeofbasis}
If
\begin{equation} \label{eq:changeofbasis}
\tilde\xi_p = \sum_{q=0}^{2n} \tilde a_{p,q} \tilde\eta_q \quad \text{and} \quad \tilde\eta_p = \sum_{q=0}^{2n} \tilde b_{p,q} \tilde\xi_q,
\end{equation}
where $\tilde a_{p,q}, \tilde b_{p,q} \in \F[U]$, then $\tilde a_{p,q}=0$ whenever $A(\tilde\xi_p) \ne A(\tilde a_{p,q} \tilde\eta_q)$, and $\tilde b_{p,q} = 0$ whenever $A(\tilde\eta_p) \ne A(b_{p,q} \tilde\xi_q)$. (In other words, each $\tilde\xi_p$ is an $\F[U]$-linear combination of the elements $\tilde\eta_q$ that are in the same filtration level as $\tilde\xi_p$, and vice versa.) Define $a_{p,q} = \tilde a_{p,q}|_{U=0}$ and $b_{p,q} = \tilde b_{p,q}|_{U=0}$.

\item \label{item:tau}
$A(\tilde\xi_0) = \tau(K)$ and $A(\tilde\eta_0) = -\tau(K)$.
\end{enumerate}
\end{proposition}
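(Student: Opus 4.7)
The plan is to follow the standard basis-reduction approach to the knot Floer complex developed in \cite{HomTau} and refined in \cite{HeddenLevineSplicing}. Items (1)--(4) are essentially the content of \cite[Proposition 2.5]{HeddenLevineSplicing}, so the only piece requiring genuinely new attention is item (5) concerning the Alexander gradings of the distinguished generators.

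To establish (1), I would begin with any $\F[U]$-basis for $\CFKm(S^3,K)$ whose elements are homogeneous in the Alexander grading and apply the standard linear-algebra reduction to the vertical component of the differential, producing a basis in which each element is either unpaired or participates in a single vertical arrow $\tilde\xi_{2j-1} \mapsto U^{k_j}\tilde\xi_{2j}$. Because the vertical homology has rank one over $\F$, exactly one basis element is unpaired; this is $\tilde\xi_0$. The horizontal reduction yielding (2) and the distinguished generator $\tilde\eta_0$ is identical. For item (3), recall Hom's definition: $\epsilon(K)$ encodes the position of $\tilde\xi_0$ inside any horizontally simplified basis. When $\epsilon(K) = 0$, one may take $\tilde\xi_0$ itself to be the horizontally distinguished element, so set $\tilde\eta_0 := \tilde\xi_0$. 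When $\epsilon(K) = \pm 1$, one arranges $\tilde\xi_0$ to appear as a horizontal arrow target ($\epsilon = 1$) or source ($\epsilon = -1$) by relabelling the pairs; the symmetric identification for $\tilde\eta_0$ is obtained by rerunning the analysis with horizontal and vertical interchanged. Item (4) is essentially automatic: both bases consist of Alexander-homogeneous elements, and since $U$ shifts Alexander grading by $-1$, any nonzero change-of-basis coefficient in $\F[U]$ is forced to be a monomial of a specific degree.

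For item (5), by its construction $\tilde\xi_0$ is the unique vertically simplified basis element whose class in $\HFK(S^3, K)$ maps to the generator of $\HF(S^3) = \F$; the Alexander grading of this distinguished cycle equals $\tau(K)$ by the characterization of $\tau$ recalled in Section~\ref{sec:HFK} (equivalently by \cite[Lemma A.2]{OSzThurstonLegendrian}). The identity $A(\tilde\eta_0) = -\tau(K)$ then follows from the conjugation symmetry of the knot Floer complex, which interchanges vertical and horizontal differentials while negating the Alexander grading. The main technical subtlety throughout is the simultaneous compatibility in (3): the two bases must be chosen in tandem so that $\tilde\xi_0$ and $\tilde\eta_0$ sit in the prescribed positions in each other's arrow decomposition, rather than independently. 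I would handle this by building the two bases iteratively outward from the pair of distinguished generators, exploiting the fact that the pairing into arrows is invariant under filtration-preserving changes of basis.
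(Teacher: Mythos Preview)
You have misidentified which part of the proposition requires work. Item (5) is not the new content: the paper's own proof obtains conditions (1), (2), (4), (5), and the $\tilde\xi_0$ half of (3) directly from \cite[Proposition 2.4]{HeddenLevineSplicing}. Your argument for (5) is fine, but it is not where the difficulty lies.

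The actual new content is the \emph{second} half of item (3): arranging that $\tilde\eta_0$ equals $\tilde\xi_1$ (when $\epsilon(K)=-1$) or $\tilde\xi_2$ (when $\epsilon(K)=1$), simultaneously with $\tilde\xi_0$ already sitting in the correct position in the $\tilde\eta$ basis. You do recognize this simultaneity issue at the end, but your proposed resolution---``building the two bases iteratively outward from the pair of distinguished generators''---is not an argument, and ``rerunning the analysis with horizontal and vertical interchanged'' would produce a different pair of bases, not establish compatibility within a single pair.

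The paper's approach is concrete. Starting from bases satisfying everything except the $\tilde\eta_0$ clause of (3), one modifies the \emph{vertically} simplified basis: when $\epsilon(K)=-1$, the conjugation symmetry of knot Floer homology guarantees that $\tilde\eta_0$ has a nontrivial outgoing vertical differential, hence $b_{0,2j-1}\neq 0$ for some $j$. After reordering so that the shortest such arrow has index $j=1$, one replaces $\tilde\xi_1$ by $\tilde\xi_1' := \tilde\eta_0$ and $\tilde\xi_2$ by $\tilde\xi_2' := \sum_j b_{0,2j-1}\tilde\xi_{2j}$; the minimality of $k_1$ ensures this is a filtered change of basis and that the new basis is still vertically simplified. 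The case $\epsilon(K)=1$ is symmetric. The key input you are missing is this explicit use of the symmetry of knot Floer homology to guarantee the required vertical arrow out of $\tilde\eta_0$.
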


\begin{proof}
By \cite[Proposition 2.4]{HeddenLevineSplicing}, we may find bases $\{\tilde \xi_0, \dots, \tilde \xi_{2n}\}$ and $\{\tilde \eta_0, \dots, \tilde \xi_{2n}\}$ satisfying conditions \ref{item:vertsimp}, \ref{item:horizsimp}, \ref{item:changeofbasis}, and \ref{item:tau}, and such that $\tilde \xi_0$ equals either $\tilde \eta_0$, $\tilde \eta_1$, or $\tilde \eta_2$ as in condition \ref{item:epsilon}.

If $\epsilon(K) =0$, we are done; otherwise, we modify the horizontally simplified basis as follows. Suppose $\epsilon(K)=-1$. By the symmetry of knot Floer homology, the distinguished horizontal generator $\tilde \eta_0$ has an outgoing vertical differential, which implies that $b_{0,2j-1} \ne 0$ for some $j \in \{1,\dots,n\}$. After reordering the elements of $\{\tilde \xi_1, \dots, \tilde \xi_{2n}\}$, we may assume that $b_{0,1} = 1$, and that for any other $j$ with $b_{0,2j-1} = 1$, we have $k_j \ge k_1$ and hence $A(\tilde \xi_{2j}) \le A(\tilde \xi_2)$. Thus, replacing $\tilde \xi_1$ and $\tilde \xi_2$ with $\tilde \xi_1' = \tilde \eta_0$ and $\tilde \xi_2' = \sum_{j=1}^n b_{0,2j-1} \tilde \xi_{2j}$ is a filtered change of basis, and $\partial \tilde \xi_1' \equiv \tilde \xi_2' \pmod {U \cdot C^-}$. The new bases satisfy all the conclusions of the theorem. The case where $\epsilon(K)=1$ is treated similarly.
\end{proof}

\begin{remark}
Property \ref{item:epsilon} may be taken as the definition of $\epsilon(K)$; Hom \cite{HomTau} proves that it does not depend on the choice of bases.
\end{remark}

\begin{theorem} \label{thm:cfkcfd}
Let $K$ be a knot in $S^3$. Given bases $\{\tilde \xi_0, \dots, \tilde \xi_{2n}\}$ and $\{\tilde \eta_0, \dots, \tilde \eta_{2n}\}$ satisfying the conclusions of Proposition \ref{prop:bases}, the type-$D$ structure $\CFD(X_K)$ satisfies the following properties:

\begin{itemize}
\item The summand $\iota_0 \cdot \CFD(X_K)$ has dimension $2n+1$, with designated bases $\{\xi_0, \dots, \xi_{2n}\}$ and $\{\eta_0, \dots, \eta_{2n}\}$ related by
\[
\xi_p = \sum_{q=0}^{2n} a_{p,q} \eta_q \quad \text{and} \quad \eta_p = \sum_{q=0}^{2n} b_{p,q} \xi_q.
\]
These elements are all homogeneous with respect to the grading by relative spin$^c$ structures.

\item The summand $\iota_1 \cdot \CFD(X_K)$ has dimension $\sum_{j=1}^n (k_j + l_j) + s$, where $s = 2 \abs{\tau(K)}$, with basis
\[
\bigcup_{j=1}^n \{\kappa^j_1, \dots, \kappa^j_{k_j}\} \cup \bigcup_{j=1}^n \{\lambda^j_1, \dots, \lambda^j_{l_j}\} \cup \{\mu_1, \dots, \mu_s\} .
\]

\item For $j=1,\dots, n$, corresponding to the vertical arrow $\tilde\xi_{2j-1} \to \tilde\xi_{2j}$, there are coefficient maps
\begin{equation} \label{eq:vertchain}
\xi_{2j} \xrightarrow{D_{123}} \kappa^j_1 \xrightarrow{D_{23}} \cdots \xrightarrow{D_{23}} \kappa^j_{k_j} \xleftarrow{D_1} \xi_{2j-1}.
\end{equation}

\item For $j=1, \dots, n$, corresponding to the horizontal arrow $\tilde\eta_{2j-1} \to \tilde\eta_{2j}$, there are coefficient maps
\begin{equation} \label{eq:horizchain}
\eta_{2j-1} \xrightarrow{D_3} \lambda^j_1 \xrightarrow{D_{23}} \cdots \xrightarrow{D_{23}} \lambda^j_{l_j} \xrightarrow{D_2} \eta_{2j},
\end{equation}

\item Depending on $\tau(K)$, there are additional coefficient maps
\begin{equation} \label{eq:unstchain}
\begin{cases}
\eta_0 \xrightarrow{D_3} \mu_1 \xrightarrow{D_{23}} \cdots \xrightarrow{D_{23}} \mu_s \xleftarrow{D_1} \xi_0 & \tau(K)>0  \\
\xi_0 \xrightarrow{D_{12}} \eta_0 & \tau(K) =0 \\
\xi_0 \xrightarrow{D_{123}} \mu_1 \xrightarrow{D_{23}} \cdots \xrightarrow{D_{23}} \mu_s \xrightarrow{D_2} \eta_0 & \tau(K) < 0.
\end{cases}
\end{equation}
\end{itemize}
\end{theorem}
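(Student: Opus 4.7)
The plan is to deduce this theorem as a direct application of Lipshitz, Ozsv\'ath, and Thurston's formula for $\CFD$ of knot complements, \cite[Theorems 11.27 and A.11]{LOTBordered}, to the particular pair of bases provided by Proposition \ref{prop:bases}. The LOT algorithm takes any reduced model of $\CFKm(S^3,K)$ together with a vertically simplified basis and a (possibly unrelated) horizontally simplified basis, and writes down an explicit type-$D$ structure; our task is therefore mostly to read off coefficient maps in terms of $\{\tilde\xi_i\}$ and $\{\tilde\eta_i\}$, and to confirm compatibility with the extra $\epsilon$-coherence of Proposition \ref{prop:bases}.

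First I would verify the structure of $\iota_0 \cdot \CFD(X_K)$. The LOT recipe produces one $\iota_0$ generator per basis element, giving dimension $2n+1$; using the two distinct bases from Proposition \ref{prop:bases} gives the two designated bases $\{\xi_i\}$ and $\{\eta_i\}$, related by the reductions mod $U$ of the change-of-basis matrices of Proposition \ref{prop:bases}(\ref{item:changeofbasis}). The chains \eqref{eq:vertchain} attached at each vertical arrow $\tilde\xi_{2j-1} \to \tilde\xi_{2j}$ of length $k_j$ are literally the output of LOT's algorithm: insert $k_j$ new $\iota_1$ generators $\kappa^j_1, \dots, \kappa^j_{k_j}$ joined by $D_{23}$ arrows, attach $\xi_{2j}$ via $D_{123}$ at one end and $\xi_{2j-1}$ via $D_1$ at the other. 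The horizontal chains \eqref{eq:horizchain} arise by the symmetric construction for the horizontal arrows in $\{\tilde\eta_i\}$.

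Next I would handle the unstable chain \eqref{eq:unstchain}. By Proposition \ref{prop:bases}(\ref{item:tau}), $A(\tilde\xi_0) = \tau(K)$ and $A(\tilde\eta_0) = -\tau(K)$, so these distinguished generators sit at symmetric filtration levels. LOT's formula then attaches a chain of $s = 2\lvert\tau(K)\rvert$ new $\iota_1$ generators $\mu_1,\dots,\mu_s$, whose direction of $D_1$ versus $D_{123}$ arrows at the two ends is dictated by the sign of $\tau(K)$; when $\tau(K)=0$ the chain degenerates to a single $D_{12}$ arrow from $\xi_0$ to $\eta_0$. Direct case analysis on $\tau(K) > 0$, $\tau(K)=0$, $\tau(K)<0$ reproduces the three cases of \eqref{eq:unstchain}.

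The main subtlety, and the one point where Proposition \ref{prop:bases} is stronger than a bare pair of simplified bases, is handling the case $\epsilon(K) = \pm 1$, where the two bases share certain elements: $\tilde\xi_0$ coincides with some $\tilde\eta_i$ ($i=1$ or $2$) and vice versa. One must check that when the distinguished vertical and horizontal generators are forcibly identified with elements of the opposite basis in this way, the vertical, horizontal, and unstable chains assemble consistently (for example, that $\xi_0$ lying at the start of a horizontal arrow does not conflict with its role at the end of the unstable chain). The filtered change-of-basis condition \ref{item:changeofbasis} in Proposition \ref{prop:bases} guarantees that each $\tilde\xi_p$ lies in the $\F[U]$-span of horizontal generators at the same filtration level, so the relevant identifications are compatible with the $\F[U]$-module structure and with the spin$^c$ grading; homogeneity of all structure maps with respect to the grading by relative spin$^c$ structures then follows from \eqref{eq:srel-shift} applied to each coefficient map. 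This verification is the only nonroutine step, and I would expect it to be essentially bookkeeping once the definition of $\epsilon$ via Proposition \ref{prop:bases}(\ref{item:epsilon}) is in hand.
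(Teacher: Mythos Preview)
Your approach matches the paper's: the theorem is stated without proof in the paper, simply as a restatement of \cite[Theorems 11.27 and A.11]{LOTBordered} in the notation of Proposition \ref{prop:bases}. Your plan to read off the LOT recipe for the given bases is exactly this.

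One remark: the ``main subtlety'' you flag is not actually an issue. The LOT algorithm takes as input any vertically simplified basis and any horizontally simplified basis and outputs a type-$D$ structure; it does not require or impose any relationship between them. The extra conditions in Proposition \ref{prop:bases}(\ref{item:epsilon}) --- that $\tilde\xi_0$ coincides with a particular $\tilde\eta_i$ --- are additional features of these particular bases, used later in the paper (e.g., in Lemmas \ref{lemma:D1D2D3(xi0)} and \ref{lemma:D3(xi2)} and in the computations of Sections \ref{sec:tau}--\ref{sec:epsilon}), but they play no role in verifying that the LOT description applies. There is nothing to check for ``consistency'': the vertical, horizontal, and unstable chains are attached independently, and the fact that $\xi_0$ may also appear as some $\eta_i$ simply means that the same $\iota_0$ generator participates in more than one chain. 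So your final paragraph can be omitted entirely.
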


We refer to the subspaces of $\CFD(X_K)$ spanned by the generators in \eqref{eq:vertchain}, \eqref{eq:horizchain}, and \eqref{eq:unstchain} as the \emph{vertical chains}, \emph{horizontal chains}, and \emph{unstable chain}, respectively.\footnote{Note that our notation differs slightly from that of \cite{LOTBordered}: the generators $\kappa^j_1, \dots, \kappa^j_{k_j}$ are indexed in the reverse order, as are $\mu_1, \dots, \mu_s$ in the case where $\tau(K)>0$.}

We conclude this section with a pair of technical lemmas that will be needed in Section \ref{sec:epsilon}. They are somewhat similar in flavor to results in \cite[Section 3]{HeddenLevineSplicing}.

\begin{lemma} \label{lemma:D1D2D3(xi0)}
In $\CFD(X_K)$ for any knot $K \subset S^3$, we have $(D_1 \circ D_2 \circ D_3)(\xi_0) = 0$.
\end{lemma}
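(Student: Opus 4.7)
The plan is a case analysis on the value of $\epsilon(K) \in \{-1, 0, 1\}$, using the explicit description of $\CFD(X_K)$ in Theorem~\ref{thm:cfkcfd} together with the identifications in Proposition~\ref{prop:bases}(\ref{item:epsilon}). In every case I track which coefficient map, if any, $D_3$ contributes out of $\xi_0$, and then apply $D_2$ and $D_1$ to the resulting intermediate generator.

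If $\epsilon(K) = 0$, then $\tau(K) = 0$ (as noted in the paragraph defining $\epsilon$) and $\xi_0 = \eta_0$; the only coefficient map emanating from this element is the self-map $D_{12}(\xi_0) = \eta_0$ of the unstable chain, so no $D_3$ term occurs and $D_3(\xi_0) = 0$ outright. If $\epsilon(K) = 1$, then $\xi_0 = \eta_2$ is the terminal vertex of the horizontal chain for $j = 1$, which contributes no outgoing coefficient map; the unstable chain out of $\xi_0$ carries one of $D_1$, $D_{12}$, or $D_{123}$ depending on the sign of $\tau(K)$, never $D_3$. So $D_3(\xi_0) = 0$ and the lemma is immediate in both cases.

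The substantive case is $\epsilon(K) = -1$, where $\xi_0 = \eta_1$ is the source of the horizontal chain for $j = 1$, so Theorem~\ref{thm:cfkcfd} gives $D_3(\xi_0) = \lambda^1_1$ (and the unstable chain contributes no $D_3$ term regardless of $\tau$). I would then split on $l_1$: if $l_1 > 1$, the chain description places the $D_2$-map only at the terminal vertex $\lambda^1_{l_1}$, so $D_2(\lambda^1_1) = 0$ and the composition vanishes. If $l_1 = 1$, then $\lambda^1_1 = \lambda^1_{l_1}$ and $D_2 D_3(\xi_0) = \eta_2$, reducing the problem to showing $D_1(\eta_2) = 0$.

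For this final reduction I would argue directly from the (complete) chain description: $\eta_2$ appears only as the terminal vertex of its horizontal chain and carries no outgoing coefficient map, and under the $\epsilon(K) = -1$ identifications of Proposition~\ref{prop:bases} the only forced coincidences are $\xi_0 = \eta_1$ and $\eta_0 = \xi_1$. In particular $\eta_2$ is not identified with any of the sources of a $D_1$-map, namely $\xi_{2j-1}$ for $j \ge 1$ or (when $\tau(K) > 0$) $\xi_0$. Hence $\delta(\eta_2)$ has no $\rho_1$-component, giving $D_1(\eta_2) = 0$. The hard part of the plan is this last step: one must carefully rely on the completeness of the coefficient-map list in Theorem~\ref{thm:cfkcfd} and check, by comparing Alexander gradings via Proposition~\ref{prop:bases}(\ref{item:changeofbasis}) with $A(\tilde\eta_2) = A(\tilde\eta_1) + l_1 = -\tau(K) + 1$, that no $\xi_{2j-1}$- or $\xi_0$-contribution sneaks into $\eta_2$ via the change-of-basis coefficients $b_{2,q}$.
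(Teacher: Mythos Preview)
Your reduction is exactly right and matches the paper: the only case with content is $\epsilon(K)=-1$, $l_1=1$, where the problem becomes showing $D_1(\eta_2)=0$, i.e.\ that no $\xi_{2j-1}$ (and no $\xi_0$ when $\tau(K)>0$) appears in the expansion $\eta_2=\sum_q b_{2,q}\xi_q$.

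The gap is in your final step. First a small correction: since $\tilde\eta_1=\tilde\xi_0$, we have $A(\tilde\eta_1)=\tau(K)$, so $A(\tilde\eta_2)=\tau(K)+1$, not $-\tau(K)+1$. More importantly, the Alexander-grading argument you propose does not suffice. The filtered change-of-basis condition only tells you that $b_{2,q}\ne 0$ forces $A(\tilde\xi_q)=\tau(K)+1$. That rules out $q=0$ (since $A(\tilde\xi_0)=\tau(K)$), but there is no general reason why some $\tilde\xi_{2j-1}$ cannot have Alexander grading $\tau(K)+1$; nothing in Proposition~\ref{prop:bases} prevents it. So Alexander gradings alone cannot kill the odd-indexed coefficients $b_{2,2j-1}$.

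The paper's argument supplies the missing idea, which uses $\partial^2=0$. From $\partial\tilde\eta_1 = U\tilde\eta_2+\gamma$ with $A(\gamma)<\tau(K)$ and the fact that $\tilde\eta_1=\tilde\xi_0$ has no vertical differential, one gets $\gamma=U\delta$ with $A(\delta)\le\tau(K)$; then $\partial^2\tilde\eta_1=0$ forces $\partial\tilde\eta_2=\partial\delta$. Writing both $\tilde\eta_2$ and $\delta$ in the $\tilde\xi$-basis and reducing mod $U$ (so only the vertical arrows $\tilde\xi_{2j-1}\to\tilde\xi_{2j}$ survive) gives $b_{2,2j-1}=c_{2j-1}$, where $c_q$ is the mod-$U$ coefficient of $\delta$. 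Now the grading constraint bites: $b_{2,q}\ne 0$ requires $A(\tilde\xi_q)=\tau(K)+1$, while $c_q\ne 0$ requires $A(\tilde\xi_q)\le\tau(K)$, so $b_{2,q}$ and $c_q$ are never simultaneously nonzero. Combined with $b_{2,2j-1}=c_{2j-1}$, this forces $b_{2,2j-1}=0$ for all $j$, which is what you need.
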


\begin{proof}
The only way we may have $(D_2 \circ D_3)(\xi_0) \ne 0$ is if $\epsilon(K)=-1$ and $l_1=1$, so that there is a horizontal chain
\[
\xi_0 = \eta_1 \xrightarrow{D_3} \lambda^1_1 \xrightarrow{D_2} \eta_2.
\]
We thus must show in this case that $D_1(\eta_2) = 0$.

Since the horizontal arrow from $\tilde\eta_1 = \tilde\xi_0$ to $\tilde\eta_2$ has length $1$, we have $A(\tilde \eta_1) = A(\tilde\xi_0) = \tau(K)$ and $A(\tilde\eta_2) = \tau(K)+1$. By the definition of a horizontally simplified basis, $\partial \tilde\eta_1 = U \tilde\eta_2 + \gamma$, where $A(\gamma)<\tau(K)$; by the definition of a vertically simplified basis, $\partial \tilde\eta_1 \in U \cdot C^-$, so $\gamma = U \delta$, and $A(\delta) \le \tau(K)$. We have
\[
0 = \partial^2 \tilde\eta_1 = U \partial \tilde\eta_2 + \partial \gamma = U \partial( \tilde\eta_2 + \delta),
\]
and since multiplication by $U$ is injective, $\partial \tilde \eta_2 = \partial \delta$.

From \eqref{eq:changeofbasis}, we have
\[
\tilde\eta_2 = \sum_{q=0}^{2n} \tilde b_{2,q} \tilde\xi_q,
\]
where $A(\tilde b_{2,q} \tilde \xi_q) = \tau(K)+1$ whenever $\tilde b_{2,q} \ne 0$. Recall that $b_{2,q} = \tilde b_{2,q}|_{U=0}$. We may also write
\[
\gamma = \sum_{q=0}^{2n} \tilde c_q \tilde \xi_q
\]
for some polynomials $\tilde c_0, \dots, \tilde c_{2n} \in \F[U]$, where $A(\tilde c_q \tilde \xi_q) \le \tau(K)$ whenever $\tilde c_q \ne 0$, and set $c_q = \tilde c_q | _{U=0}$. The conditions on the Alexander grading imply that $c_q$ and $b_{2,q}$ cannot both be nonzero for any $q$. Now, by the definition of a vertically simplified basis,
\[
\sum_{j=1}^n b_{2,2j-1} \tilde\xi_{2j} \equiv \partial \tilde\eta_2 = \partial \delta \equiv \sum_{j=1}^n c_{2j-1} \tilde\xi_{2j} \pmod {U \cdot C^-}.
\]
Therefore, $b_{2,2j-1} = c_{2j-1}=0$ for all $j=1, \dots, n$. Returning to $\CFD(X_K)$, we see that $\eta_2$ is a linear combination of $\{\xi_2, \xi_4, \dots, \xi_{2n}\}$, which completes the proof.
\end{proof}

\begin{lemma} \label{lemma:D3(xi2)}
Suppose $K$ is a knot in $S^3$ such that $\epsilon(K)=-1$ and the vertical arrow from $\tilde \eta_0 = \tilde \xi_1$ to $\tilde \xi_2$ has length $k_1=1$. Then, in $\CFD(X_K)$, we have $D_3(\xi_2)=0$.
\end{lemma}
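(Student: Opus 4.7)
The plan is to parallel the proof of Lemma~\ref{lemma:D1D2D3(xi0)}. The only outgoing $D_3$ arrows from the $\iota_0$-summand of $\CFD(X_K)$ listed in Theorem~\ref{thm:cfkcfd} are $\eta_{2j-1} \xrightarrow{D_3} \lambda^j_1$ for $j = 1, \dots, n$ and (in the case $\tau(K) > 0$) $\eta_0 \xrightarrow{D_3} \mu_1$. Expressing $\xi_2 = \sum_q a_{2,q}\, \eta_q$ via the change-of-basis matrix and applying $D_3$ linearly gives
\[
D_3(\xi_2) = \sum_{j=1}^{n} a_{2,2j-1}\, \lambda^j_1 + [\tau(K)>0]\, a_{2,0}\, \mu_1,
\]
so it suffices to show each of these coefficients vanishes.

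The starting input is the Alexander grading $A(\tilde\xi_2) = A(\tilde\xi_1) - k_1 = -\tau(K) - 1$, using $\tilde\xi_1 = \tilde\eta_0$ (from $\epsilon(K) = -1$, which also gives $A(\tilde\xi_1) = A(\tilde\eta_0) = -\tau(K)$) and the hypothesis $k_1 = 1$. By Proposition~\ref{prop:bases}(\ref{item:changeofbasis}), $a_{2,q}$ can be nonzero only if $A(\tilde\eta_q) = -\tau(K) - 1$. This rules out $q = 0$ (since $A(\tilde\eta_0) = -\tau(K)$) and $q = 1$ (since $A(\tilde\eta_1) = A(\tilde\xi_0) = \tau(K)$, and $\tau(K) = -\tau(K) - 1$ has no integer solution).

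For the remaining coefficients $a_{2, 2j-1}$ with $j \geq 2$, I invoke $\partial^2 = 0$ in the spirit of Lemma~\ref{lemma:D1D2D3(xi0)}. The vertically simplified basis condition with $k_1 = 1$ gives $\partial \tilde\xi_1 = \tilde\xi_2 + U\alpha$ for some $\alpha \in \CFKm$; since $\tilde\xi_1 = \tilde\eta_0$ is the distinguished horizontal generator, the horizontally simplified basis condition forces $A(\partial \tilde\eta_0) < -\tau(K)$, yielding $A(\alpha) \leq -\tau(K)$. Applying $\partial$ once more then gives $\partial \tilde\xi_2 = U\, \partial \alpha \in U \cdot \CFKm$. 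The technical heart of the argument, and the main obstacle, is to squeeze $a_{2, 2j-1} = 0$ out of this identity. The plan is to decompose $\alpha$ in the horizontally simplified basis, reduce the identity modulo $U \cdot \CFKm$, and exploit the horizontally simplified structure $\partial \tilde\eta_{2j-1} = U^{l_j} \tilde\eta_{2j} + \gamma_j$ with $A(\gamma_j) < A(\tilde\eta_{2j-1})$. Matching coefficients after reduction and applying the ``not both nonzero'' principle from the filtered change of basis---exactly as in the proof of Lemma~\ref{lemma:D1D2D3(xi0)}, where the constant-term coefficient $a_{2,2j-1}$ is supported at Alexander grading $-\tau(K) - 1$ while the constant-term contributions from $\alpha$ live at grading $\leq -\tau(K)$---should force each $a_{2,2j-1} = 0$.
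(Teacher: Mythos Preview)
Your approach is the same as the paper's (which simply says ``an argument similar to that of the previous lemma shows that $\xi_2$ is a linear combination of $\{\eta_2,\eta_4,\dots,\eta_{2n}\}$''), and the first two steps---eliminating $a_{2,0}$ and $a_{2,1}$ by Alexander grading---are correct.

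The final step has a small but genuine slip: you propose to ``reduce the identity modulo $U\cdot\CFKm$'' and then invoke the horizontally simplified structure, but these do not fit together. Reducing modulo $U$ is the operation suited to the \emph{vertically} simplified basis (as in Lemma~\ref{lemma:D1D2D3(xi0)}); the horizontally simplified structure lives in the associated graded with respect to the \emph{Alexander} filtration. Since Lemma~\ref{lemma:D3(xi2)} is the dual of Lemma~\ref{lemma:D1D2D3(xi0)} with the roles of the two filtrations interchanged, you must project to the Alexander-associated-graded instead. Concretely: from $\partial\tilde\xi_2 = U\,\partial\alpha$ with $A(\alpha)\le -\tau(K)$, pass to the component at Alexander grading $-\tau(K)-1$. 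On the left, using $\partial^{\mathrm{gr}}\tilde\eta_{2j-1}=U^{l_j}\tilde\eta_{2j}$ and $\partial^{\mathrm{gr}}\tilde\eta_{2j}=\partial^{\mathrm{gr}}\tilde\eta_0=0$, you get $\sum_j \tilde a_{2,2j-1}\,U^{l_j}\tilde\eta_{2j}$. On the right you get $U\cdot\partial^{\mathrm{gr}}(\alpha^{(0)})=U\sum_j \tilde c_{2j-1}\,U^{l_j}\tilde\eta_{2j}$, where $\alpha^{(0)}$ is the top Alexander piece of $\alpha$. Comparing coefficients gives $\tilde a_{2,2j-1}=U\,\tilde c_{2j-1}$, hence $a_{2,2j-1}=\tilde a_{2,2j-1}|_{U=0}=0$ directly. (The ``not both nonzero'' mechanism from Lemma~\ref{lemma:D1D2D3(xi0)} is not needed here; the extra factor of $U$ on the right-hand side does the work.) With this correction your proof is complete and in line with the paper's intent.
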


\begin{proof}
An argument similar to that of the previous lemma shows that $\xi_2$ is a linear combination of $\{\eta_2, \eta_4, \dots, \eta_{2n}\}$.
\end{proof}

\section{Bordered Floer homology of two-bridge link complements} \label{sec:2bridge}

\begin{figure}
\labellist \small
 \pinlabel $L_2$ [r] at 4 161
 \pinlabel $L_1$ [l] at 64 161
 \pinlabel $A$ [t] at 35 5
\endlabellist
\includegraphics[scale=0.8]{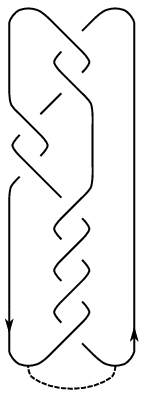}
\caption{The two-bridge link $L_Q$, along with an arc joining the two components.}
\label{fig:2bridge}
\end{figure}

Let $L = L_1 \cup L_2$ be any two-component, two-bridge link, presented by a plat diagram such as the one shown in Figure \ref{fig:2bridge}. (We do not require the diagram to be alternating, although such diagrams can always be found.) Orient $L$ such that both $L_1$ and $L_2$ are oriented counterclockwise in the projection plane near their local minima. The components of $L$ are both unknotted, meaning that the complement of either component is a solid torus. (For the specific link $L_Q$ shown in Figure \ref{fig:2bridge}, the remaining component, viewed as a knot in the solid torus, is precisely the Mazur pattern knot $Q$ shown in Figure \ref{fig:Q}.) Let $X(L)$ be the strongly bordered manifold $S^3 \minus \nbd(L)$, where each of the boundary components is equipped with the $0$-framing; we connect the two boundary components of $X(L)$ using an arc $A$ in the projection plane connecting the two local minima, equipped with the blackboard framing. The goal of this section is to describe how to compute the bordered invariant $\CFDD(X(L))$ using the arc slides algorithm of Lipshitz, Ozsv\'ath, and Thurston \cite{LOTFactoring}, and then to provide the computation for $\CFDD(X(L_Q))$.

\begin{figure}
\labellist \small
 \pinlabel* {$a_1$} [b] at 156 -5
 \pinlabel* {$b_1$} [b] at 140 -5
 \pinlabel* {$a_2$} [b] at 124 -5
 \pinlabel* {$b_2$} [b] at 108 -5
 \pinlabel* {$c_1$} [b] at 92 -5
 \pinlabel* {$d_1$} [b] at 76 -5
 \pinlabel* {$c_2$} [b] at 60 -5
 \pinlabel* {$d_2$} [b] at 44 -5
 \pinlabel* {{\color{green} $\alpha_1$}} at 175 50
 \pinlabel* {{\color{darkgreen} $\alpha_2$}} at 144 50
 \pinlabel* {{\color{red} $\alpha_3$}} at 79 50
 \pinlabel* {{\color{darkred} $\alpha_4$}} at 46 50
\endlabellist
\includegraphics[scale=0.8]{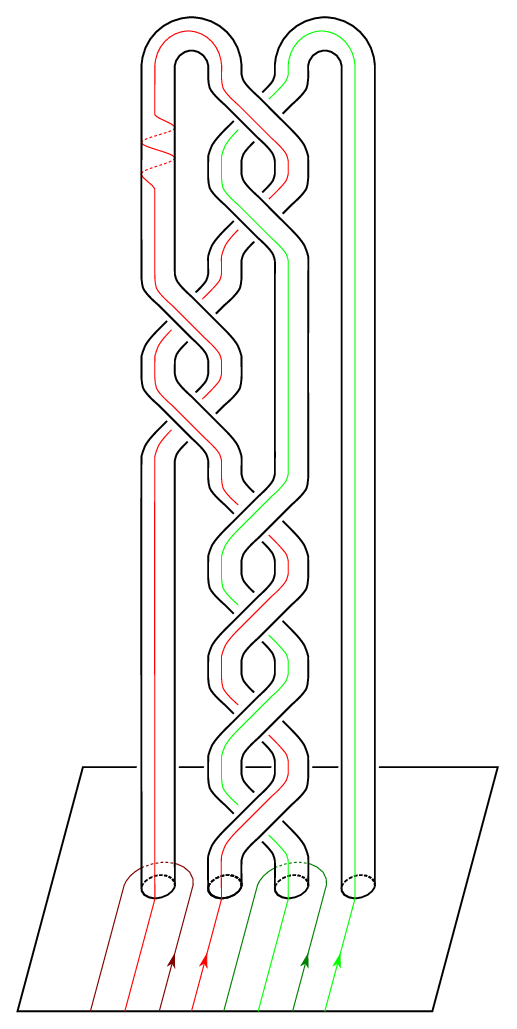}
\caption{A bordered Heegaard diagram for the complement of the two-bridge link in Figure \ref{fig:2bridge}. The positive $x$ axis points into the page, the positive $y$ axis points to the left, and the positive $z$ axis points upward; with this convention, the $(x,y)$ plane can be identified with the diagram in Figure \ref{fig:H0}.}
\label{fig:2bridge-heegaard}
\end{figure}

Let $X_{\dr}(L) = X(L) \minus \nbd(A)$, and notice that $X_{\dr}$ is in fact a genus-2 handlebody. To describe the parametrization of $\partial X_{\dr}(L)$ in terms of arc slides, it helps to consider how to obtain a bordered Heegaard diagram $\HH_{\dr}(L)$ for $X_{\dr}(L)$. Let $(x,y,z)$ denote coordinates on $\R^3$, where $z$ is the height function with respect to which $L$ is in bridge position. We may view a neighborhood of $L_1 \cup L_2 \cup A$ in $S^3$ as the lower half-space $\{z < 0\}$ in $\R^3$ together with a pair of interlocking $1$-handles in the upper half-space whose feet lie along the $y$-axis. $X_{\dr}(L)$ is then the complement of this configuration, consisting of the upper half-space $\{z \ge 0\}$ minus the two one-handles, together with the point at $\infty$. Since $X_{\dr}$ is a handlebody, its boundary (which consists of the $xy$-plane together with the point at infinity, minus four disks, plus two tubes in the upper half-space) is a Heegaard surface for $X_{\dr}(L)$. The distinguished disk $\Delta$ may be taken to be a neighborhood of the point at infinity, with the basepoint $z$ lying on the $y$ axis; let $\bar \Sigma = \partial X_{\dr} \minus \Delta$, with orientation coming from the standard orientation of $\R^2$ (which is opposite the boundary orientation on $\partial X_{\dr}(L)$). The $\alpha$ arcs $\alpha_1, \alpha_2, \alpha_3, \alpha_4$ are chosen to satisfy:
\begin{itemize}
\item
$\alpha_1$ (resp.~$\alpha_3$) consists of a pair of arcs in the $\{x\le 0, z = 0\}$ half-plane connecting $\partial \bar\Sigma$ to the feet of the $1$-handle corresponding to $L_1$ (resp.~$L_2$), together with a $0$-framed longitudinal arc in the boundary of the $1$-handle.
\item
$\alpha_2$ (resp.~$\alpha_4$) consists of a pair of arcs in the $\{x\le 0, z = 0\}$ half-plane connecting $\partial \bar\Sigma$ to the $y$-axis, joined by an arc in the $\{x \ge 0, z=0\}$ half-plane.
\item
If we label the endpoints of $\alpha_1$, $\alpha_2$, $\alpha_3$, $\alpha_4$ by $(a_1, a_2)$, $(b_1,b_2)$, $(c_1, c_2)$, and $(d_1,d_2)$, respectively, and traverse $\partial \bar \Sigma$ with the orientation opposite to that induced from $\bar \Sigma$, we encounter the points in the order $a_1, b_1, a_2, b_2, c_1, d_1, c_2, d_2$.
\end{itemize}
To find the $\beta$ circles, we apply an ambient isotopy of $\R^3$ that untangles the two $1$-handles from each other so that they become separated by the $(x,z)$ plane, at which point the compression disks become evident. We must describe the effect of the isotopy on the $\alpha$ arcs.

For simplicity, assume that the bridge presentation of $L$ consists entirely of whole twists and that the top and bottom closures both consist of arcs that are neither overlapping nor nested. To be precise, suppose that $L$ is the plat closure of the $4$-stranded braid
\begin{equation} \label{eq:braidword}
\sigma_2^{2a_0} \sigma_1^{2a_1} \cdots \sigma_2^{2a_{m-2}} \sigma_1^{2a_{m-1}} \sigma_2^{2a_{m}}
\end{equation}
for some nonzero integers $a_0, \dots, a_m$, where $m$ is even and the braid is read from bottom to top. With the orientation on $L$ given above, the linking number of $L_1$ and $L_2$ is
\[
\ell = \lk(L_1, L_2)  = - \sum_{i=0}^{m/2} a_{2i}.
\]
(For our example $L_Q$, $m=2$ and $(a_0, a_1, a_2) = (2,-1,-1)$, so $\ell= -1$.)

\begin{figure}
\labellist \small
 \pinlabel* {$a_1$} [r] at 8 23
 \pinlabel* {$b_1$} [r] at 8 37
 \pinlabel* {$a_2$} [r] at 8 51
 \pinlabel* {$b_2$} [r] at 8 65
 \pinlabel* {$c_1$} [r] at 8 87
 \pinlabel* {$d_1$} [r] at 8 101
 \pinlabel* {$c_2$} [r] at 8 115
 \pinlabel* {$d_2$} [r] at 8 129
 \pinlabel* {{\color{green} $\alpha_1$}} [b] at 18 23
 \pinlabel* {{\color{darkgreen} $\alpha_2$}} [b] at 18 37
 \pinlabel* {{\color{red} $\alpha_3$}} [b] at 18 115
 \pinlabel* {{\color{darkred} $\alpha_4$}} [b] at 18 129
 \pinlabel* {$\gamma_2$} [l] at 50 69
 \pinlabel* {$\gamma_1$} [l] at 50 101
\endlabellist
\includegraphics{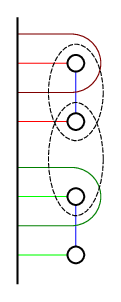}
\caption{The standard Heegaard diagram for the $0$-framed genus-$2$ handlebody.}
\label{fig:H0}
\end{figure}

Following the notation of \cite[Section 1.4]{LOTFactoring}, let $\ZZ^g$ denote the split pointed matched circle of genus $g$. Let $H^g$ denote a handlebody of genus $g$, and let $\phi_g^0\co -F(\ZZ) \to \partial H^g$ denote the standard ($0$-framed) parametrization of the boundary; in the case $g=2$, $(H^2, \phi_2^0)$ is represented by the Heegaard diagram $\HH_0$ in Figure \ref{fig:H0} (which is diffeomorphic to the one in \cite[Figure 5]{LOTFactoring}.) Let $\gamma_1, \gamma_2$ be curves in $\HH_0$ as shown. Let $T_1$ denote a positive Dehn twist around $\gamma_1$, and let $T_2$ denote a positive Dehn twist around $\gamma_2$ composed with a negative Dehn twist around the meridian of each of the two tubes. If we ignore the $\beta$ circles, we may identify $\HH_0$ with the surface in Figure \ref{fig:2bridge-heegaard}. Undoing a full right-handed twist between the feet of the left-hand $1$-handle in Figure \ref{fig:2bridge-heegaard} modifies the curves in the $(x,y)$ plane by $T_1$, and undoing a full twist between the feet of the two $1$-handles modifies the curves by $T_2$. Applying these operations and their inverses in the sequence prescribed by \eqref{eq:braidword}, we see:

\begin{lemma} \label{lemma:psi_L}
The bordered Heegaard diagram $\HH_{\dr}(L)$ is isotopic to the diagram obtained from $\HH_0$ by applying the diffeomorphism
\begin{equation} \label{eq:psi_L}
\psi_L = T_2^{a_m} \circ T_1^{a_{m-1}} \circ T_2^{a_{m-2}} \circ \cdots \circ T_1^{a_1} \circ T_2^{a_0}
\end{equation}
to the $\alpha$ arcs and leaving the $\beta$ circles unchanged. Therefore, as a bordered manifold,
\[
X_{\dr}(L) = (H^2, \phi_2^0 \circ \psi_L).
\]
\end{lemma}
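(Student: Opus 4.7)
The plan is to identify the bordered Heegaard diagram $\HH_{\dr}(L)$ with the one obtained from $\HH_0$ by a sequence of explicit ambient isotopies of $\R^3$, each corresponding to one whole twist region in the plat braid presentation of $L$. The key observation is that, throughout any such isotopy, the upper half-space minus the two $1$-handles remains a genus-$2$ handlebody whose boundary is a Heegaard surface for $X_{\dr}(L)$; the $\beta$ circles stay fixed as the meridians of the two tubes, while the $\alpha$ arcs are carried along by the isotopy.

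First, I would establish the correspondence between untangling isotopies and the Dehn twists $T_1, T_2$. Undoing the topmost twist region $\sigma_2^{2a_m}$ requires an ambient isotopy that, when viewed in the standard (untangled) configuration of $\HH_0$, rotates a disk containing the feet of both tubes; the resulting self-diffeomorphism of the Heegaard surface is by definition $T_2^{a_m}$. Likewise, undoing a twist region of the form $\sigma_1^{2a_i}$ corresponds to a rotation of a disk containing only the feet of the left tube, which acts as $T_1^{a_i}$. Continuing inductively from the top of the braid down to the bottom, the cumulative effect on the $\alpha$ arcs is precisely the composition written in \eqref{eq:psi_L}, with the rightmost factor $T_2^{a_0}$ applied first because it corresponds to the bottom twist region of the braid.

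Second, I would verify the sign conventions. The orientation conventions fixed just before the lemma (both components oriented counterclockwise near their local minima, the Heegaard surface oriented by the standard orientation of $\R^2$) should be compared against the half-twist $\sigma_2$ to confirm that a positive exponent translates to a positive power of $T_2$; the case of $T_1$ is checked similarly. The inverse meridional Dehn twists built into the definition of $T_2$ account for the fact that a naive Dehn twist around $\gamma_2$ would alter the framings of the $\alpha$ arcs $\alpha_1$ and $\alpha_3$, whereas the isotopy in $\R^3$ preserves the $0$-framings prescribed on both boundary components of $X(L)$.

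The main obstacle will be the careful bookkeeping of framings and of how the endpoints of the $\alpha$ arcs move along $\partial \bar\Sigma$ during each untangling step; one must check that the labeling $a_1, b_1, a_2, b_2, c_1, d_1, c_2, d_2$ listed before the lemma is preserved by $\psi_L$ so that the resulting parametrization is indeed $\phi_2^0 \circ \psi_L$ rather than a different composition. Once this is verified for a single twist of each type, the full statement follows by induction on $m$, and the last clause $X_{\dr}(L) = (H^2, \phi_2^0 \circ \psi_L)$ is then immediate from the general principle (as in \cite[Section 1.4]{LOTFactoring}) that modifying the $\alpha$ arcs of a bordered Heegaard diagram by a boundary diffeomorphism reparametrizes the boundary by the same diffeomorphism.
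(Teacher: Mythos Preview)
Your proposal is correct and follows the same approach as the paper. In fact, the paper does not give a separate proof of this lemma at all: the statement is presented as a direct consequence of the paragraph immediately preceding it, which simply observes that undoing a full twist between the feet of the left-hand $1$-handle acts on the $\alpha$ arcs by $T_1$, undoing a full twist between the feet of the two $1$-handles acts by $T_2$, and then one applies these in the sequence dictated by the braid word~\eqref{eq:braidword}. Your plan expands on exactly this reasoning, adding the checks of sign conventions, framings, and endpoint labeling that the paper leaves implicit. One small point to tighten: your description says you untangle ``from the top of the braid down to the bottom'' but then identifies $T_2^{a_0}$ as the factor applied first; make sure the temporal order of your untangling isotopies is consistent with the composition order in~\eqref{eq:psi_L}.
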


It remains to describe the factorization of $T_1$ and $T_2$ into arc-slides. For this discussion, we label the points on the boundary of any genus-$2$ pointed matched circle $p_0, \dots, p_7$. An arc-slide of $p_i$ over $p_j$ (where $j = i \pm 1$) is indicated by $[i \to j]$, assuming that the initial and final pointed matched circles are known from context. Composition is written from right to left (just as with functions).


\begin{proposition} \label{prop:arcslides}
In the strongly based mapping class groupoid of genus 2, the mapping classes
\[
T_1, T_2 \co F(\ZZ^2) \to F(\ZZ^2)
\]
have the following factorizations into arc-slides:
\begin{align*}
T_1 &= [7 \to 6]^6 \\
T_2 &= [6 \to 5] \circ [5 \to 4] \circ [4 \to 3] \circ [7 \to 6] \circ [5 \to 4] \circ [4 \to 3] \circ [3 \to 2] \circ {} \\
& \qquad [6 \to 5] \circ [4 \to 3] \circ [3 \to 2] \circ [2 \to 1] \circ [3 \to 4] \circ [4 \to 5] \circ [4 \to 3]
\end{align*}
\end{proposition}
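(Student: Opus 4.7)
The plan is to verify each factorization by direct computation in the strongly based mapping class groupoid of genus $2$. By \cite{LOTFactoring}, every arc-slide $[i \to j]$ corresponds to an explicit ``half-twist'' diffeomorphism of $F(\ZZ^2)$ supported in a small disk containing the arc whose foot is being slid, and these arc-slides generate the groupoid. So each claim reduces to an elementary check that an explicit composition of arc-slide diffeomorphisms agrees with the target Dehn twist.

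For $T_1 = [7 \to 6]^6$, I would first observe that the arc-slide $[7 \to 6]$ is supported entirely in a neighborhood of the feet labeled $6$ and $7$, which both lie on the right-hand handle of $\HH_0$ and are the feet encircled by $\gamma_1$. Therefore each iterate of $[7 \to 6]$ preserves the matching and acts by a prescribed partial twist around $\gamma_1$. To find the correct exponent I would track the images of the arcs $\alpha_3, \alpha_4$ (and a meridian dual to them) under successive applications of $[7 \to 6]$, verify that after six applications they return to their starting configurations, and check that the induced action on $H_1$ of the surface agrees with a single positive Dehn twist around $\gamma_1$. A quick homological calculation gives the correct exponent up to sign; direct visualization then pins down the sign and confirms the value $6$.

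For $T_2$, the strategy is the same but requires more bookkeeping: compose the fourteen arc-slides one at a time, tracking the images of the four $\alpha$-arcs $\alpha_1, \dots, \alpha_4$ after each step, and verify that the final configuration matches the image of $\alpha_1, \dots, \alpha_4$ under the explicit description of $T_2$ as a positive Dehn twist around $\gamma_2$ composed with negative Dehn twists around the two tube meridians. An alternative would be to start from a factorization of $T_2$ produced by the general algorithm of \cite{LOTFactoring} and reduce it to the claimed factorization using the standard relations among arc-slides (commutation of disjoint slides together with the triangle and pentagon relations from \cite{LOTFactoring}).

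The main obstacle is purely bookkeeping: each arc-slide changes the underlying pointed matched circle, so every subsequent symbol $[i \to j]$ must be interpreted with respect to the \emph{current} labeling rather than the starting one. With fourteen factors the computation for $T_2$ is long enough that a by-hand verification is error-prone. In practice I would confirm both identities using Zhan's Python implementation \cite{ZhanComputations} of the arc-slide groupoid, which composes arc-slides symbolically and reduces to a normal form; this gives a fully automated check of the claimed factorizations and is consistent with how the rest of the paper performs its bordered Floer computations.
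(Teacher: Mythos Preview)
Your approach is essentially the same as the paper's: verify each factorization by tracking the images of the $\alpha$-arcs of $\HH_0$ through the sequence of arc-slides and checking that the result agrees with $T_i(\HH_0)$. The paper carries this out via explicit figures (Figures~\ref{fig:arcslides-T1} and~\ref{fig:arcslides-T2}) rather than computer verification, and it makes explicit the key lemma you are implicitly using: an element of the strongly based mapping class groupoid is completely determined by where it sends the $\alpha$-arcs of $\HH_0$, so matching those arcs suffices.

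A few small points to tighten. Your description of an arc-slide as a ``half-twist diffeomorphism supported in a small disk'' is not quite right; an arc-slide is a handle-slide of one matched pair over an adjacent one, and in general it changes the underlying pointed matched circle (as you correctly note later). For $T_1$, the phrase ``return to their starting configurations'' is misleading: the $\alpha$-arcs do \emph{not} return to where they started, they land at $T_1(\alpha_i)$; what returns to its starting state is the pointed matched circle. Finally, checking the action on $H_1$ is only a sanity check and cannot by itself pin down the mapping class (the Torelli group is nontrivial); you need the full comparison of arc images, which is exactly what you propose for $T_2$ and what the paper does throughout.
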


\begin{figure}
\includegraphics{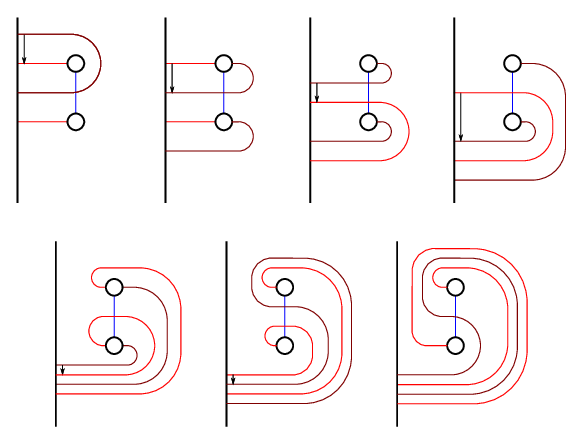}
\caption{Arc slide decomposition of $T_1$.} \label{fig:arcslides-T1}
\end{figure}

\begin{figure}
\includegraphics{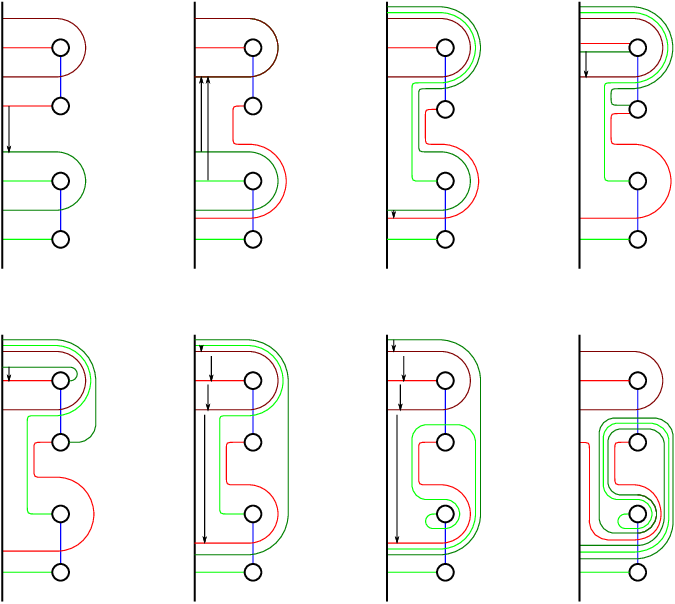}
\caption{Arc slide decomposition of $T_2$.} \label{fig:arcslides-T2}
\end{figure}

\begin{proof}
Let $T_i(\HH_0)$ denote the bordered Heegaard diagram obtained by applying $T_i$ to the $\alpha$ arcs of $\HH_0$ while leaving the $\beta$ circles unchanged. An element of the mapping class groupoid is completely determined by where it sends the $\alpha$ arcs of $\HH_0$. Therefore, if we exhibit a sequence of arc-slides taking the $\alpha$ arcs of $\HH_0$ to those of $T_i(\HH_0)$, it follows that the sequence is a factorization of $T_i$.

The verification is shown in Figures \ref{fig:arcslides-T1} and \ref{fig:arcslides-T2}. For $T_1$, all of the arc-slides take place within the connected summand of the surface containing the arcs $\alpha_3$ and $\alpha_4$, so we only show that summand. Figure \ref{fig:arcslides-T1} shows that performing six arc-slides of the topmost point over the arc adjacent to it ($[7 \to 6]$ in the above notation) results in a Dehn twist along a curve encircling both feet of the $1$-handle.

For $T_2$, the sequence of arc-slides is more complicated. We first slide $\alpha_3$ over $\alpha_2$ ($[4 \to 3]$), and then slide $\alpha_2$ and $\alpha_3$ over $\alpha_4$ ($[4 \to 5]$ followed by $[3 \to 4]$). We then slide $\alpha_2$ over $\alpha_3, \alpha_4, \alpha_3$ so that the feet of $\alpha_3$ and $\alpha_4$ become nested between those of both $\alpha_1$ and $\alpha_2$. We then perform a sequence of four slides of $\alpha_1$ over $\alpha_3$ and $\alpha_4$ alternately, and then do the same for $\alpha_4$. It is easy to verify that the resulting diagram agrees the one obtained by applying $T_2$ to $\HH_0$ and that the notation for these arc-slides agrees with the composition in the statement of the proposition.
\end{proof}

\begin{remark}
In the more general setting where the $2$-bridge diagram for $L$ is not comprised of full twists only, one can compute the arc-slide factorizations of the surface diffeomorphisms corresponding to undoing a single crossing rather than a full twist by the same techniques. The initial and final pointed matched circles of each of these diffeomorphisms are not the same, so there are more cases to consider. Indeed, a similar strategy can be used to compute the bordered invariants of any knot or link complement, starting from a bridge presentation.
\end{remark}

According to \cite[Theorem 3]{LOTFactoring}, we may compute $\CFD(X_{\dr}(L))$ by factoring $\psi_L$ into arc-slides, which can be done combining Lemma \ref{lemma:psi_L} and Proposition \ref{prop:arcslides}. Specifically, suppose that $\psi_L = \psi_1 \circ \cdots \circ \psi_n$, where $\psi_i \co F(\ZZ_i) \to F(\ZZ_{i-1})$ is an arc-slide, where $\ZZ_0 = \ZZ_n = \ZZ^2$. By \cite[Theorem 3]{LOTFactoring},
\begin{multline} \label{eq:CFD(Xdr)}
\CFD(X_{\dr}(L)) \simeq \Mor \left( \DDhat(\I_{\ZZ_{n-1}}) \otimes \cdots \otimes \DDhat(\I_{\ZZ_{1}}), \right. \\
\left. \DDhat(\psi_n) \otimes \cdots \otimes \DDhat (\psi_1) \otimes \CFD(H^2, \phi_2^0) \right).
\end{multline}
Here $\DDhat(\I_{\ZZ_i})$ denotes the identity $DD$ bimodule for the algebra $\AA(\ZZ_i)$, $\DDhat(\psi_i)$ denotes the arc-slide $DD$ bimodule associated to $\psi_i$, all tensor products are taken over the appropriate rings of idempotents, and $\Mor$ denotes the chain complex of morphisms of $\AA(\ZZ_{n-1}) \otimes \cdots \otimes \AA(\ZZ_1)$--bimodules. All the modules in \eqref{eq:CFD(Xdr)} are completely computed in \cite{LOTFactoring}, allowing for algorithmic computation of $\CFD(X_{\dr}(L))$.

To obtain an arced Heegaard diagram $\HH(L)$ for $X(L)$ \cite[Definition 5.4]{LOTBimodules}, we attach a $2$-dimensional $1$-handle to $\partial \bar\Sigma$, joining $z$ to the segment between $b_2$ and $c_1$, and let the cocore of this $1$-handle be the basepoint arc. Denote the new Heegaard surface $\tilde \Sigma$, and denote its two boundary components $\partial_1 \tilde \Sigma$ and $\partial_2 \tilde \Sigma$, corresponding to the complements of $L_1$ and $L_2$. The $DD$ bimodule $\CFDD(X(L))$ is by definition induced from $\CFD(X_{\dr}(L))$ via the canonical inclusion map $\AA(\ZZ^1) \otimes \AA(\ZZ^1) \to \AA(\ZZ^2)$ \cite[Definition 6.4]{LOTBimodules}. To be precise, denote the copies of $\AA(\ZZ^1)$ corresponding to $\partial_1 \tilde \Sigma$ (resp.~$\partial_2 \tilde \Sigma$) by $\AA_\rho$ (resp.~$\AA_\sigma$), with idempotents $\iota_0^\rho, \iota_1^\rho$ (resp.~$\iota_0^\sigma, \iota_1^\sigma$) and Reeb elements $\rho_I$ (resp.~$\sigma_I$) for contiguous subsequence $I \subset (1,2,3)$. That is, the algebra elements associated to the Reeb chords in $\HH(L)$ are as follows:
\[
\begin{array}{|c|c|c|c|c|c|}
  \hline
  [a_1,b_1] & [b_1,a_2] & [a_2,b_2] & [c_1,d_1] & [d_1,c_2] & [c_2,d_2] \\ \hline
  \rho_1 & \rho_2 & \rho_3 & \sigma_1 & \sigma_2 & \sigma_3 \\ \hline
\end{array}
\]

We also consider the periodic domains in $\HH_{\dr}(L)$ and $\HH(L)$. Orient the $\alpha$ arcs as shown in Figure \ref{fig:2bridge-heegaard}; that is, $\alpha_1$ and $\alpha_3$ run parallel to $L_1$ and $L_2$ with the orientations specified above, and $\alpha_2$ and $\alpha_4$ are \emph{left-handed} meridians of $L_1$ and $L_2$. Letting $[\alpha_i]$ denote the class in $H_1(X_{\dr}(L))$ represented by the union of $\alpha_i$ and a segment of $\partial \bar \Sigma$, we see that $[\alpha_1] = -\ell [\alpha_4]$ and $[\alpha_3] = -\ell [\alpha_2]$. Therefore, in $\HH_{\dr}(L)$, there are periodic domains $\PP_1$ and $\PP_2$ (corresponding to punctured Seifert surfaces for $L_1$ and $L_2$ respectively) whose multiplicities in the eight boundary regions (beginning with the region containing $z$ and ordered opposite to the boundary orientation on $\partial \bar\Sigma$) are given by:
\begin{align*}
\partial^\partial(\PP_1) &= (0,1,1,0,0,0,\ell,\ell) \\
\partial^\partial(\PP_2) &= (0,0,\ell,\ell,0,1,1,0).
\end{align*}
These can also be viewed as periodic domains in $\HH(L)$.

Now suppose $P \subset V$ is a pattern knot with winding number $m$, as in Section \ref{sec:satellite}. Let $(\HH',z,w)$ be a doubly pointed Heegaard diagram for $(V,P)$, and label the boundary regions $R_0,R_1,R_2,R_3$ according to the convention for $\CFA$. There is a periodic domain $\PP_\mu$ in $\HH'$ with boundary multiplicities $(0,0,1,1)$, and $n_w(\PP_\mu) = m$. If we glue the diagrams $\HH'$ and $\HH(L)$ along $\partial_1 \tilde \Sigma$, we obtain a Heegaard diagram for the complement of $L_2$, which is identified with $V$, and the basepoint $w$ determines the pattern knot $P(L_1) \subset V$. The group of periodic domains for the new Heegaard diagram is generated by $\PP_2 + \ell \PP_V$. The multiplicity of this periodic domain at $w$ is $\ell m$, which is thus the winding number of $P(L_1)$.

\subsection{Computation of \texorpdfstring{$\CFDD(X(L_Q))$}{CFDD(X(L\_Q))}}

Bohua Zhan has written a software package in Python that implements the arc-slides algorithm for computing (bordered) Heegaard Floer homology \cite{ZhanComputations}. Specifically, this package contains functions for manipulating (bi)modules over the bordered algebras, including evaluating tensor products and Mor pairings, simplifying modules using the edge reduction algorithm, and recovering $\CFDD(X)$ from $\CFDD(X_{dr})$. It can also generate the type $D$ structure associated to the standard handlebody $(H^g, \phi_g^0)$ of any genus and the type $DD$ bimodule associated to any arc-slide, using the descriptions given in \cite{LOTFactoring}. With Zhan's assistance, the author used this program to compute $\CFDD(X(L_Q))$; the result is given by the following theorem:

\begin{theorem}
\allowdisplaybreaks
Let $L_Q$ denote the two-bridge link shown in Figure \ref{fig:2bridge}. The type $DD$ bimodule $\CFDD(X(L_Q))$ has a basis $\{g_1, \dots, g_{34}\}$ with the following properties:\footnote{The indexing of the generators is completely determined by the output of Zhan's program.}
\begin{enumerate}
\item The associated idempotents in $\AA_\rho$ and $\AA_\sigma$ of the generators are:
\[
\begin{array}{|c|c|c|} \hline
 & \iota_0^\rho & \iota_1^\rho \\ \hline
 \iota_0^\sigma & g_2, g_{21}, g_{27}, g_{29}, g_{34} &
    g_4, g_5, g_9, g_{10}, g_{16}, g_{23}, g_{25}, g_{32} \\ \hline
 \iota_1^\sigma & g_1, g_7, g_{11}, g_{13}, g_{15}, g_{18}, g_{19}, g_{30} &
    g_3, g_6, g_8, g_{12}, g_{14}, g_{17}, g_{20}, g_{22}, g_{24}, g_{26}, g_{28}, g_{31}, g_{33} \\ \hline
\end{array}
\]
\item
The differential is as follows:
\begin{align*}
d(g_{1}) &= \rho_1 \cdot g_{24} \\
d(g_{2}) &= \rho_3 \sigma_3 \cdot g_{6} + \rho_{123} \sigma_{123} \cdot g_{8} + \rho_1 \sigma_{123} \cdot g_{12} + \rho_{123} \sigma_1 \cdot g_{17} \\
d(g_{3}) &= \rho_2 \cdot g_{1} \\
d(g_{4}) &= \rho_2 \cdot g_{21} + \sigma_3 \cdot g_{26} \\
d(g_{5}) &= \sigma_1 \cdot g_{31} \\
d(g_{6}) &= \sigma_2 \cdot g_{25} + \rho_2 \cdot g_{30} \\
d(g_{7}) &= \rho_3 \cdot g_{3} + \rho_1 \cdot g_{12} + \rho_{123} \cdot g_{24} + \rho_1 \sigma_{23} \cdot g_{31}  \\
d(g_{8}) &= 0 \\
d(g_{9}) &= \rho_{23} \cdot g_{25} + \rho_2 \sigma_1 \cdot g_{30} \\
d(g_{10}) &= \rho_{23} \sigma_1 \cdot g_{8} + \sigma_1 \cdot g_{12} + \sigma_{123} \cdot g_{31} + \sigma_3 \cdot g_{33}  \\
d(g_{11}) &= \rho_1 \cdot g_{17} + \rho_1 \sigma_{23} \cdot g_{24} + \rho_3 \cdot g_{28} \\
d(g_{12}) &= 0 \\
d(g_{13}) &= \rho_3 \cdot g_{20} + \sigma_2 \cdot g_{27} \\
d(g_{14}) &= 0 \\
d(g_{15}) &= \rho_1 \sigma_2 \cdot g_{25} + \sigma_{23} \cdot g_{30} \\
d(g_{16}) &= \sigma_{123} \cdot g_{8} + \sigma_1 \cdot g_{17} + \sigma_3 \cdot g_{22} \\
d(g_{17}) &= 0 \\
d(g_{18}) &= \sigma_2 \cdot g_{2} + \rho_{123} \sigma_2 \cdot g_{25} + \rho_3 \cdot g_{26}  \\
d(g_{19}) &= \rho_1 \cdot g_{14} \\
d(g_{20}) &= \rho_{23} \cdot g_{6} + \sigma_2 \cdot g_{9} \\
d(g_{21}) &= \rho_{123} \sigma_{123} \cdot g_{14} + \sigma_3 \cdot g_{15} + \rho_1 \sigma_{123} \cdot g_{17} \\
d(g_{22}) &= \sigma_2 \cdot g_{23} \\
d(g_{23}) &= \sigma_1 \cdot g_{8} \\
d(g_{24}) &= 0 \\
d(g_{25}) &= (\rho_{23} \sigma_3 + \sigma_{123}) \cdot g_{8} + \rho_{23} \sigma_1 \cdot g_{14} + (\rho_{23} \sigma_3 + \sigma_{123}) \cdot g_{24} \\
d(g_{26}) &= \sigma_{23} \cdot g_{6} + \rho_2 \cdot g_{15} \\
d(g_{27}) &= \rho_3 \cdot g_{9} + \rho_{123} \sigma_1 \cdot g_{12}  \\
d(g_{28}) &= \rho_2 \cdot g_{19} \\
d(g_{29}) &= \rho_3 \cdot g_{4} + \rho_{123} \sigma_{123} \cdot g_{17} + \sigma_3 \cdot g_{18} \\
d(g_{30}) &= (\rho_{123} + \rho_3 \sigma_{23}) \cdot g_{8} + \rho_1 \sigma_{23} \cdot g_{31} + (\rho_{123} + \rho_3 \sigma_{23}) \cdot g_{24} \\
d(g_{31}) &= 0 \\
d(g_{32}) &= \rho_2 \cdot g_{2} + \sigma_3 \cdot g_{20} \\
d(g_{33}) &= \sigma_2 \cdot g_{5} \\
d(g_{34}) &= \rho_{123} \sigma_{123} \cdot g_{12} + \sigma_3 \cdot g_{13} + \rho_3 \cdot g_{32}
\end{align*}
\end{enumerate}
\end{theorem}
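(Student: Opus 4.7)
The plan is to apply the algorithmic pipeline developed earlier in Section \ref{sec:2bridge}, as implemented in Zhan's software package \cite{ZhanComputations}. The proof is essentially a verified computation, organized into four steps.

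\textbf{Step 1 (Arc-slide factorization of $\psi_{L_Q}$).}  Reading off the braid word from Figure \ref{fig:2bridge}, the link $L_Q$ is the plat closure of a four-stranded braid with $(a_0,a_1,a_2) = (2,-1,-1)$, so by Lemma \ref{lemma:psi_L} the relevant mapping class is
\[
\psi_{L_Q} \;=\; T_2^{-1} \circ T_1^{-1} \circ T_2^{\,2}.
\]
Inverting and concatenating the arc-slide factorizations of $T_1$ and $T_2$ from Proposition \ref{prop:arcslides} gives an explicit decomposition $\psi_{L_Q} = \psi_1 \circ \cdots \circ \psi_n$ into arc-slides, with $\psi_i \co F(\ZZ_i)\to F(\ZZ_{i-1})$ and $\ZZ_0 = \ZZ_n = \ZZ^2$.

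\textbf{Step 2 (Evaluate the Mor pairing).}  Apply \eqref{eq:CFD(Xdr)} to compute
\[
\CFD(X_{\dr}(L_Q)) \;\simeq\; \Mor\bigl( \DDhat(\I_{\ZZ_{n-1}}) \otimes \cdots \otimes \DDhat(\I_{\ZZ_{1}}),\ \DDhat(\psi_n) \otimes \cdots \otimes \DDhat(\psi_1) \otimes \CFD(H^2,\phi_2^0)\bigr),
\]
feeding in the explicit descriptions of the identity $DD$ bimodules $\DDhat(\I_{\ZZ_i})$, the arc-slide bimodules $\DDhat(\psi_i)$, and the standard handlebody module $\CFD(H^2,\phi_2^0)$ all given in \cite{LOTFactoring}.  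Between successive tensor products one runs the edge reduction algorithm (see \cite[Section 2.6]{LevineDoublingOperators}) to keep the intermediate modules small.

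\textbf{Step 3 (Induce to $\CFDD(X(L_Q))$).}  Attach a $1$-handle in the Heegaard diagram to separate the two boundary components, which corresponds algebraically to inducing along the canonical inclusion $\AA(\ZZ^1)\otimes \AA(\ZZ^1)\hookrightarrow \AA(\ZZ^2)$.  The output is a $DD$ bimodule over $\AA_\rho\otimes \AA_\sigma$.  A final round of edge reduction yields a minimal model; its generators are labeled $g_1,\dots,g_{34}$ in the order produced by the program, the idempotent decomposition is read off, and the differential is recorded as stated.

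\textbf{Main obstacle.}  The difficulty is purely the scale of the computation: even with edge reduction applied after every tensor product, the intermediate modules are far too large to handle by hand (each identity bimodule $\DDhat(\I_{\ZZ^2})$ already has rank on the order of $|\AA(\ZZ^2)|^2$).  The proof therefore reduces to a verified execution of Zhan's implementation on the input dictated by Steps 1--3.  Independent sanity checks can and should be done directly from the stated data: one verifies that $d^2=0$ term by term, that every summand of $d(g_k)$ has idempotents compatible with the assignment in part (1), and that the total dimensions in each idempotent combination match the Euler characteristic counts predicted by a genus-$2$ bordered Heegaard diagram for $X(L_Q)$ drawn directly from Figure \ref{fig:2bridge-heegaard}.
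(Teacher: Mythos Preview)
Your proposal is correct and follows essentially the same approach as the paper: the theorem is obtained by running Zhan's implementation of the arc-slides algorithm on the factorization of $\psi_{L_Q}$ provided by Lemma \ref{lemma:psi_L} and Proposition \ref{prop:arcslides}, then inducing along $\AA(\ZZ^1)\otimes\AA(\ZZ^1)\hookrightarrow\AA(\ZZ^2)$ and simplifying. The paper does not give a separate proof beyond this description of the computation; your added sanity checks ($d^2=0$, idempotent compatibility) are reasonable supplements.
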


\section{Computation of \texorpdfstring{$\tau(Q(K))$}{\texttau(Q(K))}} \label{sec:tau}

In this section, we prove the first half of Theorem \ref{thm:Q}: For any knot $K \subset S^3$,
\begin{equation} \label{eq:tau(Q(K))}
\tau(Q(K)) =
\begin{cases}
\tau(K) & \text{if } \tau(K) \le 0 \text{ and } \epsilon(K) \in \{0,1\} \\
\tau(K)+1 & \text{if } \tau(K) > 0 \text{ or } \epsilon(K) = -1.
\end{cases}
\end{equation}

\begin{remark} \label{rmk:CFHH}
Some partial results in the direction of \eqref{eq:tau(Q(K))} follow from much simpler considerations. First, note that $Q(K)$ can be turned into $K$ by changing a single positive crossing into a negative crossing. This operation either preserves $\tau$ or decreases $\tau$ by $1$ \cite[Corollary 1.5]{OSz4Genus}, so $\tau(Q(K))$ must equal $\tau(K)$ or $\tau(K)+1$. Both cases were previously known to occur: If $O$ is the unknot, then $Q(O)=O$, so $\tau(Q(O)) = \tau(O) = 0$. On the other hand, Cochran, Franklin, Hedden, and Horn \cite{CochranFranklinHeddenHorn} showed that if a nontrivial knot $K$ admits a Legendrian representative whose Thurston--Bennequin number satisfies $\operatorname{tb}(K) = 2g(K)-1$, then $\tau(Q(K))= \tau(Q(K))+1$; in fact, their proof carries through almost verbatim under the weaker hypothesis that $\operatorname{tb}(K) = 2\tau(K)-1 > 0$. Subsequently, Ray \cite{RayIterates} observed that under the same hypotheses, the iterated satellites $Q^n(K)$ satisfy $\tau(Q^n(K))=\tau(K)+n$ and hence are distinct in concordance. Both of these results follow directly from \eqref{eq:tau(Q(K))}.
\end{remark}

We begin by using the results of the previous section to compute $\CFAm(V,Q)$. Let $C \subset V$ be the knot $S^1 \times \{\pt\}$, specified by the Heegaard diagram in Figure \ref{fig:cables}(a), and let $X = X(L_Q) = V \minus \nbd(Q)$ be the (bordered) exterior of the $2$-bridge link $L_Q$ depicted in Figure \ref{fig:2bridge}. By a suitable version of the pairing theorem, there is a graded homotopy equivalence
\[
\CFDm(V,Q) \simeq \CFAm(V,C) \boxtimes_{\AA_\rho} \CFDD(X).
\]
As seen in Section \ref{sec:2bridge}, this gluing describes the orientation of $Q$ whose winding number is $-1$. Since knot Floer homology is invariant under orientation reversal, this convention does not affect the computation of $\tau(Q(K))$, but we will need the winding number (with sign) in order to apply Proposition \ref{prop:Aglue}.

The invariant $\CFAm(V,C)$ was computed by Lipshitz, Ozsv\'ath, and Thurston \cite[Lemma 11.22]{LOTBordered}:

\begin{lemma} \label{lemma:CFAm(V,C)}
The type $A$ module for the solid torus equipped with its core circle, $\CFAm(V,C)$, has a single generator $a$, with $\AA_\infty$ multiplications given by
\begin{equation} \label{eq:CFAm(V,C)}
m_{3+i}(a, \rho_3, \underbrace{\rho_{23}, \dots, \rho_{23}}_{i}, \rho_2) = U^{i+1} \cdot a
\end{equation}
for all $i \ge 0$.
\end{lemma}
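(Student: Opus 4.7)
The plan is to read off the $\AA_\infty$ structure directly from the genus-$1$ doubly-pointed bordered Heegaard diagram $\HH$ of Figure \ref{fig:cables}(a). First, I would note the combinatorics of the diagram: $\alpha_2^a$ meets $\beta_1$ transversely in a single point $a$, and the arcs occupied by $a$ place it in the idempotent $\iota_0$ (in $\CFA$ conventions, the one compatible with incoming Reeb chord sequences beginning with $\rho_3$ and ending with $\rho_2$). So $\CFAm(V,C)$ is rank $1$ over $\F[U]$, generated by $a$, and the only possible nontrivial multiplications $m_{n+1}(a,\dots)$ return to $a$.

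Next, I would classify the positive domains $B\in \pi_2(a,a)$ whose Reeb-chord boundary is an admissible sequence. Removing $\alpha_1^a,\alpha_2^a,\beta_1$ from the Heegaard torus gives four regions; one contains $z$ and one contains $w$. Up to adding the $z$-region (which does not alter the multiplications since $n_z=0$ is required) and up to the sole periodic domain from the handlebody, the only positive classes with boundary on $\partial\HH$ of the correct form are annular domains $B_i$ whose boundary along $\partial\HH$ spells out $(\rho_3,\rho_{23},\dots,\rho_{23},\rho_2)$ with $i$ copies of $\rho_{23}$; such a $B_i$ wraps $i+1$ times longitudinally around $V$ and has $n_w(B_i)=i+1$. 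A direct application of the index formula of \cite[Section 7]{LOTBordered} then gives $\ind(B_i,(\rho_3,\rho_{23}^i,\rho_2))=1$ for every $i\ge 0$.

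Third, I would count holomorphic representatives of each $B_i$. For generic admissible almost-complex structure the moduli space $\MM^{B_i}(a,a;\rho_3,\rho_{23}^i,\rho_2)$ is rigid, and a standard cut-and-paste argument (or equivalently, the computation of $\CFA$ of the $\infty$-framed solid torus in \cite[Lemma 11.22]{LOTBordered}) shows $\#\MM^{B_i}=1$ mod $2$. Substituting into \eqref{eq:CFA-def} and using $n_w(B_i)=i+1$ yields
\[
m_{3+i}(a,\rho_3,\rho_{23},\dots,\rho_{23},\rho_2)=U^{i+1}\cdot a.
\]
Checking all other possible boundary patterns (e.g.\ those involving $\rho_1$ or mismatched idempotents), one sees they cannot bound positive domains returning to $a$, so no further multiplications appear.

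The main obstacle is the rigorous holomorphic curve count: verifying that each $B_i$ has exactly one representative mod $2$, and that no further (possibly disconnected or higher multiplicity) domains contribute. In practice this is handled exactly as in the undecorated case of \cite[Lemma 11.22]{LOTBordered}; the only new ingredient here is the $U$-weight recording $n_w(B_i)$, which is purely combinatorial and does not affect the transversality argument.
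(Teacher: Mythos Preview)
Your proposal is correct and is precisely the standard Heegaard-diagram computation. Note, however, that the paper does not actually give its own proof of this lemma: it simply cites \cite[Lemma 11.22]{LOTBordered}, where Lipshitz, Ozsv\'ath, and Thurston carry out exactly the argument you sketch (the only new feature here being the $U$-weighting by $n_w$, which as you observe is purely combinatorial). So your approach and the paper's ``proof'' coincide by reference.
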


\begin{proposition} \label{prop:CFDm(V,Q)}
The type $D$ structure $\CFDm(V,Q)$ has a basis
\[
\{x_0, \dots, x_6, y_1, \dots, y_6\}
\]
with the following properties:

\begin{itemize}
\item
The generators $x_0, x_2, x_4, y_2, y_4$ are in $\iota_0 \cdot \CFDm(V,Q)$, and the remaining ones are in $\iota_1 \cdot \CFDm(V,Q)$.

\item
The differential is as follows:
\begin{equation} \label{eq:CFDm(V,Q)}
\xymatrix{
x_0 \ar[dr]_{U \sigma_1} & x_1 \ar[l]_{\sigma_2} \ar[d]^{U^2} & x_2 \ar[d]_{U} \ar[l]_{\sigma_3} & x_3 \ar[dl]^{\sigma_2} \ar[d]^{U} & x_4 \ar[l]_{\sigma_3} \ar[d]^{U} & x_5 \ar[d]^{U} & x_6 \ar[d]^{U} \\
& y_1 & y_2 \ar[l]_{U \sigma_3} & y_3 \ar@/^1.5pc/[ll]^{\sigma_{23}} & y_4 \ar[l]^{\sigma_3} & y_5 & y_6
}
\end{equation}
\end{itemize}
\end{proposition}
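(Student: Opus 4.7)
The plan is to invoke the pairing theorem in the form
$$\CFDm(V,Q) \simeq \CFAm(V,C) \boxtimes_{\AA_\rho} \CFDD(X(L_Q))$$
and then compute the box tensor product on the right-hand side explicitly, combining Lemma~\ref{lemma:CFAm(V,C)} with the presentation of $\CFDD(X(L_Q))$ given in the previous theorem.

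First I would identify which generators of $\CFDD(X(L_Q))$ contribute. Since the unique generator $a$ of $\CFAm(V,C)$ carries the idempotent $\iota_0$, only the $g_i$ whose $\rho$-idempotent is $\iota_0^\rho$ survive. Reading the idempotent table, these are precisely the $5+8=13$ elements $\{g_2,g_{21},g_{27},g_{29},g_{34}\}$ (with $\iota_0^\sigma$) together with $\{g_1,g_7,g_{11},g_{13},g_{15},g_{18},g_{19},g_{30}\}$ (with $\iota_1^\sigma$), which already matches both the total count and the idempotent split of the claimed basis $\{x_0,\dots,x_6,y_1,\dots,y_6\}$.

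Next I would work out the differential on each $a\otimes g_i$. By the definition of $\boxtimes$, two kinds of contributions arise. First, whenever $d(g_i)$ has a summand of the form $(1\otimes\sigma)\cdot g_j$ whose $\rho$-component is an idempotent, it contributes $\sigma\cdot(a\otimes g_j)$ directly, since $m_1(a)=0$ and $m_2(a,\cdot)=0$. Second, by \eqref{eq:CFAm(V,C)}, each chain of differentials in $\CFDD(X(L_Q))$ whose successive $\rho$-outputs spell out the pattern $(\rho_3,\rho_{23},\dots,\rho_{23},\rho_2)$ with $i$ copies of $\rho_{23}$, say
$$g_i \xrightarrow{\rho_3\otimes\tau_0} h_0 \xrightarrow{\rho_{23}\otimes\tau_1} h_1 \to \cdots \to h_i \xrightarrow{\rho_2\otimes\tau_{i+1}} h_{i+1},$$
contributes the term $U^{i+1}(\tau_0\tau_1\cdots\tau_{i+1})\cdot(a\otimes h_{i+1})$ to $d(a\otimes g_i)$. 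To finish, one collects all such contributions and relabels the 13 surviving generators as $x_0,\dots,x_6,y_1,\dots,y_6$ so as to recover the arrows shown in \eqref{eq:CFDm(V,Q)}.

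The main obstacle will be the combinatorial bookkeeping: one must trace every operation chain of the prescribed form through the large bimodule $\CFDD(X(L_Q))$ and verify that exactly the arrows in \eqref{eq:CFDm(V,Q)} arise with the correct $U^j\sigma$-coefficients. Because the claimed basis has the same cardinality as the set of surviving generators, no edge reduction should be required after the tensor product, so the result is already minimal. In practice this verification can be (and for this paper was) carried out using Zhan's software package \cite{ZhanComputations}, which automates box tensor products over the bordered algebras.
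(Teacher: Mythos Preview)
Your proposal is correct and follows essentially the same approach as the paper: both compute the box tensor product $\CFAm(V,C) \boxtimes_{\AA_\rho} \CFDD(X(L_Q))$ by identifying the thirteen generators $a\otimes g_i$ with $\iota_0^\rho$-idempotent and then reading off the differential from the $\sigma$-only terms of $d(g_i)$ together with the $(\rho_3,\rho_{23},\dots,\rho_{23},\rho_2)$ chains that pair with the higher multiplications in \eqref{eq:CFAm(V,C)}. One small correction: the paper carries out this particular tensor product by hand (explicitly listing the eight chains that produce the $U$-weighted arrows), reserving Zhan's software only for the earlier computation of $\CFDD(X(L_Q))$.
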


\begin{proof}
In the tensor product $\CFAm(V,C) \boxtimes_{\AA_\rho} \CFDD(X)$, we define:
\[
\begin{aligned}
x_0 &= a \otimes g_{27} &
x_1 &= a \otimes g_{13} & y_1 &= a \otimes g_{30} &
x_2 &= a \otimes g_{34} & y_2 &= a \otimes g_{2} \\
&& x_3 &= a \otimes g_{18} & y_3 &= a \otimes g_{15} &
x_4 &= a \otimes g_{29} & y_4 &= a \otimes g_{21} \\
&& x_5 &= a \otimes g_{7} & y_5 &= a \otimes g_{11} &
x_6 &= a \otimes g_{1} & y_6 &= a \otimes g_{19}
\end{aligned}
\]

Any differential in $\CFDD(X)$ that involves only $\sigma_I$ results in a differential in the tensor product. The chains of differentials in $\CFDD(X)$ that pair with the higher multiplications in $\CFAm(V,C)$ to produce differentials in the tensor product are:
\begin{align*}
& \xymatrix{ g_2 \ar[r]^{\rho_3 \sigma_3} & g_6 \ar[r]^{\rho_2} & g_{30} } &
& \xymatrix{ g_7 \ar[r]^{\rho_3} & g_3 \ar[r]^{\rho_2} & g_1 } \\
& \xymatrix{ g_{11} \ar[r]^{\rho_3} & g_{28} \ar[r]^{\rho_2} & g_{19} } &
& \xymatrix{ g_{13} \ar[r]^{\rho_3} & g_{20} \ar[r]^{\rho_{23}} & g_6 \ar[r]^{\rho_2} & g_{30} } \\
& \xymatrix{ g_{18} \ar[r]^{\rho_3} & g_{26} \ar[r]^{\rho_2} & g_{15} } &
& \xymatrix{ g_{27} \ar[r]^{\rho_3} & g_9 \ar[r]^{\rho_2 \sigma_1} & g_{30} } \\
& \xymatrix{ g_{29} \ar[r]^{\rho_3} & g_{4} \ar[r]^{\rho_2} & g_{21} } &
& \xymatrix{ g_{34} \ar[r]^{\rho_3} & g_{32} \ar[r]^{\rho_2} & g_{2} }
\end{align*}
These account for the eight arrows in \eqref{eq:CFDm(V,Q)} that involve positive powers of $U$.
\end{proof}

\begin{figure}
\labellist \small
 \pinlabel $z$ at 28 142
 \pinlabel $w$ at 64 15
 \pinlabel $1$ at 151 151
 \pinlabel $2$ at 151 19
 \pinlabel $3$ at 19 19
 \pinlabel* $x_0$ [r] at 8 54
 \pinlabel* $y_2$ [r] at 8 70
 \pinlabel* $y_4$ [r] at 8 86
 \pinlabel* $x_4$ [r] at 8 102
 \pinlabel* $x_2$ [r] at 8 118
 \pinlabel* $x_5$ [t] at 46 5
 \pinlabel* $x_6$ [t] at 57 5
 \pinlabel* $y_6$ [t] at 68 5
 \pinlabel* $y_5$ [t] at 79 5
 \pinlabel* $y_1$ [t] at 90 5
 \pinlabel* $y_3$ [t] at 101 5
 \pinlabel* $x_3$ [t] at 112 5
 \pinlabel* $x_1$ [t] at 123 5
\endlabellist
\includegraphics[scale=1.1]{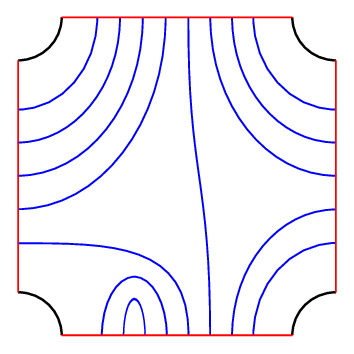}
\caption{A genus-one Heegaard diagram for $(V,Q)$.} \label{fig:Q-heegaard}
\end{figure}

\begin{remark} \label{rmk:Q-heegaard}
The reader may easily verify that the doubly pointed, bordered Heegaard diagram $(\HH,z,w)$ shown in Figure \ref{fig:Q-heegaard} presents $(V,Q)$. If we label the generators of $\CFDm(\HH,z,w)$ as shown, one may use the Riemann mapping theorem to verify that $\CFDm(\HH,z,w)$ agrees with \eqref{eq:CFDm(V,Q)}, except for a few additional differentials:
\begin{align*}
x_2 &\xrightarrow{U \sigma_{123}} x_5 &
 y_2 &\xrightarrow{U \sigma_{123}} y_5 \\
x_4 &\xrightarrow{U \sigma_{123}} x_6 &
 y_4 &\xrightarrow{U \sigma_{123}} y_6.
\end{align*}
However, we may make a change of basis, replacing $y_2$ and $y_4$ with $y_2' = y_2 + \sigma_{123} x_5$ and $y_4' = y_4 + \sigma_{123} x_6$, respectively. With the new basis, it is easy to verify that the differential agrees precisely with \eqref{eq:CFDm(V,Q)} (with primes added as appropriate).
\end{remark}

\begin{proposition} \label{prop:CFAm(V,Q)}
The type $A$ structure $\CFAm(V,Q)$ has a basis
\[
\{x_0, \dots, x_6, y_1, \dots, y_6\}
\]
with the following properties:

\begin{itemize}
\item
The generators $x_0, x_2, x_4, y_2, y_4$ are in $\CFAm(V,Q) \cdot \iota_0$, and the remaining ones are in $\CFAm(V,Q) \cdot \iota_1$.

\item
The $\AA_\infty$ multiplications are as follows:
\[
\xymatrix@R=0.6in@C=0.6in{
x_0 \ar[dr]_{U \rho_3} &
x_1 \ar[l]_{\rho_2} \ar[d]_(0.35){\substack{U^2 + \\ U\rho_{23}}} &
x_2 \ar[d]_{U} \ar[l]_{\rho_1} \ar@/_1.5pc/[ll]_{\rho_{12}} \ar[dl]_(0.4){U\rho_{123}} &
x_3 \ar[dl]^{\rho_2} \ar[d]^{U}  &
x_4 \ar[l]^{\rho_1} \ar[d]^{U} \ar@/_4pc/[lld]_(0.3){\rho_{12}} &
x_5 \ar[d]^{U} &
x_6 \ar[d]^{U} \\
& y_1 &
y_2 \ar[l]_{U \rho_1} &
y_3 \ar@/^1.5pc/[ll]^(0.4){\rho_2\rho_1} &
y_4 \ar[l]^{\rho_1} \ar@/^3pc/[lll]^{\rho_{12}\rho_1} &
y_5 &
y_6
}
\]
(In other words, we have $m_1(x_1) = U^2 y_1$, $m_2(x_1, \rho_{23}) = U y_1$, $m_3(y_3,\rho_2, \rho_1) = y_1$, and so on.)
\end{itemize}
\end{proposition}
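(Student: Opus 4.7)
The plan is to compute $\CFAm(V,Q)$ directly from the doubly pointed bordered Heegaard diagram $(\HH,z,w)$ of Figure \ref{fig:Q-heegaard}, in parallel with the verification of $\CFDm(V,Q)$ sketched in Remark \ref{rmk:Q-heegaard}. The intersection-point set $\T_\alpha \cap \T_\beta$ and the idempotent assigned to each generator depend only on the Heegaard diagram and not on whether one packages the disk counts as a type $D$ or a type $A$ structure, so the generators $\{x_0,\ldots,x_6,y_1,\ldots,y_6\}$ and their idempotent decomposition are forced to coincide with those of Proposition \ref{prop:CFDm(V,Q)}.

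The core task is to enumerate the holomorphic disks in $\HH$ whose east boundary realizes a Reeb chord sequence $(\rho_{I_1},\ldots,\rho_{I_n})$ and to read off their contributions $U^{n_w(B)}\cdot \y$ to $m_{n+1}(\x,\rho_{I_1},\ldots,\rho_{I_n})$, as in \eqref{eq:CFA-def}. Because $\HH$ has genus one, rigid disks are determined by their shadows via the Riemann mapping theorem, as in the proof of \cite[Lemma 11.22]{LOTBordered}. I would proceed in order of increasing $n$: the $n=0$ bigons give the internal differential; the $n=1$ disks with length-one chords $\rho_1,\rho_2,\rho_3$ give the basic $m_2$ arrows; the $n=1$ disks with composite chords $\rho_{12},\rho_{23},\rho_{123}$ give the remaining $m_2$ terms; and the $n=2$ disks produce the $m_3$ operations with sequences such as $(\rho_2,\rho_1)$ and $(\rho_{12},\rho_1)$ that appear in the statement. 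As in Remark \ref{rmk:Q-heegaard}, the raw output will carry a few extraneous low-weight operations which can be removed by a filtered change of basis $y_2 \mapsto y_2 + c_5 x_5$ and $y_4 \mapsto y_4 + c_6 x_6$ for appropriate $c_5,c_6 \in U\cdot \F[U]$; after this change the multiplications should match the stated diagram exactly.

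The main obstacle will be correctly accounting for the higher $m_3$ operations with Reeb chord sequences like $(\rho_2,\rho_1)$ and $(\rho_{12},\rho_1)$, which come from domains whose east boundary revisits $\partial \bar\Sigma$ between successive chords; one must ensure that all such disks are identified and weighted correctly in $U$ without double-counting, and that the resulting operations satisfy the $\AA_\infty$ relations (which is automatic once the answer is obtained from an actual Heegaard diagram). As a cross-check I would use a purely algebraic route: starting from Proposition \ref{prop:CFDm(V,Q)}, one can recover $\CFAm(V,Q)$ by tensoring $\CFDm(V,Q)$ with the $AA$ identity bimodule $\CFAA(\I_{\AA(T^2)})$ of the torus algebra (in its $U$-version) and simplifying via edge reduction, a computation that can be automated using Zhan's software \cite{ZhanComputations}. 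Agreement of the two approaches pins down the answer rigorously.
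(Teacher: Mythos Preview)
Your ``cross-check'' is in fact the paper's entire proof: one converts the already-computed $\CFDm(V,Q)$ of Proposition \ref{prop:CFDm(V,Q)} into $\CFAm(V,Q)$ purely algebraically, either by tensoring with $\CFAA(\I)$ as in \cite[Section 10.1]{LOTBimodules} or via the algorithm of \cite[Theorem 2.2]{HeddenLevineSplicing}. No Heegaard diagram is touched at this step. Your primary route, a direct disk count in Figure \ref{fig:Q-heegaard}, is a genuinely different and more laborious path; it would work, but the paper avoids it precisely because the type $D$ structure is already in hand.

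There is also a concrete slip in your primary plan. The change of basis you propose, $y_2 \mapsto y_2 + c_5 x_5$ with $c_5 \in U\cdot\F[U]$, cannot do the job: $y_2$ lies in $\CFAm(V,Q)\cdot\iota_0$ while $x_5$ lies in $\CFAm(V,Q)\cdot\iota_1$, so no nonzero $\F[U]$-linear combination of them is idempotent-homogeneous, and the resulting basis would not have the form asserted in the proposition. The simplification in Remark \ref{rmk:Q-heegaard} is $y_2' = y_2 + \sigma_{123}\, x_5$ on the type $D$ side, with an \emph{algebra} coefficient; this should be read as a type $D$ homotopy equivalence (a map into $\AA \otimes N$), not as a scalar change of basis. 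The corresponding simplification on the type $A$ side is an $\AA_\infty$ homotopy equivalence, which is exactly what edge reduction or the $\CFAA(\I)$ tensor product produces for you automatically. (Relatedly, your assertion that the idempotent of a generator is independent of whether one packages the invariant as type $A$ or type $D$ is not literally correct in the conventions of \cite{LOTBordered}: the two use complementary idempotents for the same intersection point, and the agreement between Propositions \ref{prop:CFDm(V,Q)} and \ref{prop:CFAm(V,Q)} comes from passing through $\CFAA(\I)$, not from reading both off the same diagram.) Promote the algebraic route to the main argument and all of these issues disappear.
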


\begin{proof}
This follows from a modified version of the algorithm described in \cite[Theorem 2.2]{HeddenLevineSplicing}, or by direct computation using the description of $\CFAA(\I)$ given in \cite[Section 10.1]{LOTBimodules}. (Note that we have reverted to referring to the algebra elements as $\rho_I$ rather than $\sigma_I$.)
\end{proof}

\begin{lemma} \label{lemma:CFK(Q(U))}
The constants associated to the generators of $\CFAm(V,Q) \cdot \iota_0$ via Proposition \ref{prop:Aglue} are $C_{x_0} = C_{x_2} = -2$, $C_{y_2} = C_{x_4} = -1$, and $C_{y_4}=0$.
\end{lemma}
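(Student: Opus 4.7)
The plan is to apply Proposition \ref{prop:Aglue} with $K = U$ the unknot. Since $\tilde Q$ is the unknot, we have $Q(U) = U$, so the pairing theorem yields a chain homotopy equivalence
\[
\gCFKm(S^3, U) \simeq \CFAm(V, Q) \boxtimes \CFD(X_U),
\]
whose left-hand side is $\F[U]$ with its single generator in Alexander grading $0$. The gluing convention fixed at the start of Section \ref{sec:tau} gives winding number $m = -1$, and the distinguished generator $\xi_0 \in \iota_0 \cdot \CFD(X_U)$ satisfies $A_U(\xi_0) = \tau(U) = 0$. Hence Proposition \ref{prop:Aglue} specializes to $C_a = A(a \otimes \xi_0)$, and the task reduces to computing these five Alexander gradings directly from the tensor product.

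Next, apply Theorem \ref{thm:cfkcfd} to the unknot (so $n = 0$ and $\tau = \epsilon = 0$): the type-$D$ structure $\CFD(X_U)$ has $\iota_0 \cdot \CFD(X_U) = \F\langle \xi_0 \rangle$, $\iota_1 \cdot \CFD(X_U) = 0$, and the $\tau = 0$ unstable chain contributes a single coefficient $\delta^1(\xi_0) = \rho_{12} \otimes \xi_0$ (consistent because $\rho_{12} \cdot \rho_{12} = 0$ in $\AA(T^2)$). Consequently the box tensor product is free over $\F[U]$ on the five generators $x_0 \otimes \xi_0, x_2 \otimes \xi_0, x_4 \otimes \xi_0, y_2 \otimes \xi_0, y_4 \otimes \xi_0$, and its differential is simply $m_1(a) \otimes \xi_0 + m_2(a, \rho_{12}) \otimes \xi_0$, since Proposition \ref{prop:CFAm(V,Q)} lists no higher operations $m_{k+1}(a, \rho_{12}, \dots, \rho_{12})$ with more than one $\rho_{12}$ input.

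Reading off from Proposition \ref{prop:CFAm(V,Q)} and suppressing $\xi_0$, the only nontrivial differentials are $d x_2 = x_0 + U y_2$ and $d x_4 = y_2 + U y_4$, while $x_0, y_2, y_4$ are cycles. A direct calculation shows that the homology is free of rank one over $\F[U]$ on $[y_4]$, with $[y_2] = U [y_4]$ and $[x_0] = U^2 [y_4]$. Matching this to $\gCFKm(S^3, U) = \F[U]$, whose generator lies in Alexander grading $0$, we read off $A(y_4 \otimes \xi_0) = 0$, $A(y_2 \otimes \xi_0) = -1$, and $A(x_0 \otimes \xi_0) = -2$. For the two non-cycle generators $x_2$ and $x_4$, we invoke that the differential on $\gCFKm$ preserves Alexander grading: both summands of $d x_2$ lie in grading $-2$ and both summands of $d x_4$ in grading $-1$, forcing $A(x_2 \otimes \xi_0) = -2$ and $A(x_4 \otimes \xi_0) = -1$.

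Assembling these values via $C_a = A(a \otimes \xi_0)$ yields the claimed constants $C_{x_0} = C_{x_2} = -2$, $C_{y_2} = C_{x_4} = -1$, and $C_{y_4} = 0$. No step is conceptually deep; the only points requiring care are the identification of $\CFD(X_U)$ (particularly the self-loop coefficient $\rho_{12}$ from the unstable chain) and the verification that no $A_\infty$ operations of $\CFAm(V, Q)$ on multiple $\rho_{12}$'s contribute to the pairing, both of which are immediate from the structural results already established.
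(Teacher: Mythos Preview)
Your proposal is correct and follows essentially the same approach as the paper: compute the tensor product with $\CFD(X_U)$, identify its homology as $\F[U]$ generated by $y_4 \otimes \xi_0$ in Alexander grading $0$, and then back out the remaining gradings from the differential. Your account is in fact slightly more explicit than the paper's (spelling out $\delta^1(\xi_0) = \rho_{12} \otimes \xi_0$, verifying no higher $m_{k+1}(-,\rho_{12},\dots,\rho_{12})$ contribute, and checking the homology computation), but the underlying argument is identical.
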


\begin{proof}
Let $O \subset S^3$ denote the unknot and $X_O$ its complement, equipped with the $0$-framing. Note that $Q(O)$ is also the unknot. The type-$D$ structure $\CFD(X_O)$ has a single generator $\xi_0$, in Alexander grading $0$. The tensor product complex $\CFAm(V,Q) \boxtimes \CFD(X_O)$ is as follows:
\begin{equation} \label{eq:CFK(Q(U))}
\xymatrix{
 & x_2 \otimes \xi_0 \ar[dl] \ar[d]^U & x_4 \otimes \xi_0 \ar[dl] \ar[d]^U \\
x_0 \otimes \xi_0 & y_2 \otimes \xi_0 & y_4 \otimes \xi_0
}
\end{equation}
Note that $A_{Q(O)}(x_0 \otimes \xi_0) = A_{Q(O)}(x_2 \otimes \xi_0) = A_{Q(O)}(y_2 \otimes \xi_0) -1$ and $A_{Q(O)}(y_2 \otimes \xi_0) = A_{Q(O)}(x_4 \otimes \xi_0) = A_{Q(O)}(y_4 \otimes \xi_0) -1$. The homology is $\F[U]$, generated by $y_4 \otimes \xi_0$, which has Alexander grading $0$ since $\tau(Q(O))=0$. This determines the Alexander gradings of the remaining generators, and hence the constants from Proposition \ref{prop:Aglue}.
\end{proof}

\begin{proof}[Proof of \eqref{eq:tau(Q(K))}]
Consider the tensor product of $\CFAm(V,Q)$ with $\CFD(X_K)$, where $K$ is a knot in $S^3$ and $X_K$ is its exterior equipped with the $0$-framing. Note that the generators $x_5, x_5', x_6, x_6'$ do not affect $\tau(Q(K))$, since their tensor products with $\CFD(X_K)$ produce summands of $\CFKm(S^3,Q(K))$ that are $U$-torsion.

In the case where $\epsilon(K)=0$, $\CFD(X_K)$ has a summand isomorphic to $\CFD(X_O)$, so $\CFAm(V,Q) \boxtimes \CFD(X_K)$ has a summand isomorphic to \eqref{eq:CFK(Q(U))}. It follows immediately that $\tau(Q(K)) = \tau(Q(O)) = 0$. Thus, we restrict to the cases where $\epsilon(K)=\pm 1$. Let $s = 2\abs{\tau(K)}$.

Let $W$ be the subspace of $\CFK(S^3, Q(K))$ generated by the elements
\begin{equation} \label{eq:unstablegens}
\{x_0 \otimes \eta_0, x_2 \otimes \xi_0, y_2 \otimes \xi_0\} \cup \{x_1 \otimes \mu_1, y_1 \otimes \mu_i \mid i=1, \dots, s\} \cup H,
\end{equation}
where
\begin{equation} \label{eq:horizontalgens}
H =
\begin{cases}
\emptyset & \text{if } \epsilon(K) = -1 \\
\{x_3 \otimes \lambda, y_3 \otimes \lambda\} & \text{if } \epsilon(K)=1.
\end{cases}
\end{equation}
(In the latter case $\lambda = \lambda^1_{\ell_1}$ is the final element in the horizontal chain that terminates in $\xi_0 = \eta_2$, with a differential $\lambda \xrightarrow{D_2} \xi_0$.) Using Theorem \ref{thm:cfkcfd} and Proposition \ref{prop:CFAm(V,Q)}, it is not hard to verify that $W$ is a direct summand of $\CFK(S^3, Q(K))$ (as a chain complex); it has no incoming or outgoing differentials. We will see that the homology of $W$ contains a $\F[U]$ part, meaning that $\tau(Q(K))$ is determined completely by $W$.

Recall that $A_K(\xi_0) = \tau(K)$ and $A_K(\eta_0) = -\tau(K)$. By Proposition \ref{prop:Aglue} and Lemma \ref{lemma:CFK(Q(U))}, we have:
\begin{align*}
A_{Q(K)}(x_0 \otimes \eta_0) &= -A_K(\eta_0)-2 = \tau(K) -2 \\
A_{Q(K)}(x_2 \otimes \xi_0) &= -A_K(\xi_0)-2 = -\tau(K) -2 \\
A_{Q(K)}(y_2 \otimes \xi_0) &= -A_K(\xi_0)-1 = -\tau(K) -1.
\end{align*}

We consider three cases according to $\tau(K)$:

\begin{itemize}
\item
When $\tau(K)>0$, the unstable chain in $\CFD(X_K)$ (along with the possible $D_2$ differential into $\xi_0$ if $\epsilon(K)=1$) has the form
\[
\xymatrix{
\eta_0 \ar[r]^{D_3} & \mu_1 \ar[r]^{D_{23}} & \cdots \ar[r]^{D_{23}} & \mu_s & \xi_0 \ar[l]_{D_1} & \lambda. \ar@{-->}[l]_{D_2}
}
\]
The differential on $W$ is as follows:
\begin{equation} \label{eq:unstable-tau>0}
\xymatrix{
x_0 \otimes \eta_0 \ar[dr]^{U} & x_1 \otimes \mu_1 \ar[d]^{U^2} \ar[dr]^{U} & \dots \ar[dr]^{U} &
x_1 \otimes \mu_{s-1} \ar[d]^{U^2} \ar[dr]^{U} &
x_1 \otimes \mu_s \ar[d]^{U^2} & x_2 \otimes \xi_0 \ar[l] \ar[d]^{U}  \\
 & y_1 \otimes \mu_1 & \dots & y_1 \otimes \mu_{s-1} & y_1 \otimes \mu_s & y_2 \otimes \xi_0 \ar[l]_{U} \\
&&&& y_3 \otimes \lambda \ar@{-->}[u] & x_3 \otimes \lambda \ar@{-->}[l]_{U} \ar@{-->}[u]
}
\end{equation}
where the elements in the bottom row are included if $\epsilon(K)=1$.

In addition to the Alexander gradings compute above, we see inductively that
\begin{align*}
A_{Q(K)} (y_1 \otimes \mu_j) &= A_{Q(K)}(x_0 \otimes \eta_0) - j + 2 = \tau(K)-j \\
A_{Q(K)} (x_1 \otimes \mu_j) &= A_{Q(K)}(x_0 \otimes \eta_0) - j = \tau(K) - j -2 \\
\intertext{and, in the case where $\epsilon(K)=1$,}
A_{Q(K)} (y_3 \otimes \lambda) &= A_{Q(K)} (y_1 \otimes \mu_s) = -\tau(K) \\
A_{Q(K)} (x_3 \otimes \lambda) &= A_{Q(K)} (y_2 \otimes \xi_0) = -\tau(K) - 1.
\end{align*}
Regardless of $\epsilon(K)$, the element
\[
U^{s-1} x_0 \otimes \eta_0 + U^{s-2} x_1 \otimes \mu_1 + \cdots + x_1 \otimes \mu_{s-1} + y_2 \otimes \xi_0
\]
is a cycle in Alexander grading $-\tau(K)-1$ that generates the $\F[U]$-free part of the homology, and there is no such cycle in higher Alexander grading. Therefore, $\tau(Q(K)) = \tau(K)+1$.

\item
When $\tau(K)<0$, the unstable chain in $\CFD(X_K)$ has the form
\[
\xymatrix{
\lambda \ar@{-->}[r]^{D_2} & \xi_0 \ar[r]^{D_{123}} & \mu_1 \ar[r]^{D_{23}} & \cdots \ar[r]^{D_{23}} & \mu_s \ar[r]^{D_2} & \eta_0,
}
\]
where again $\lambda$ is included if $\epsilon(K)=1$. The differential on $W$ now takes the form:
\[
\xymatrix{
x_3 \otimes \lambda \ar@{-->}[d]^{U} \ar@{-->}[dr] &
x_2 \otimes \xi_0 \ar[dr]^{U} \ar[d]^{U} & x_1 \otimes \mu_1 \ar[d]^{U^2} \ar[dr]^{U} & \dots \ar[dr]^{U} & x_1 \otimes \mu_s \ar[d]^{U^2} \ar[dr] & \\
y_3 \otimes \lambda & y_2 \otimes \xi_0 & y_1 \otimes \mu_1 & \dots & y_1 \otimes \mu_s & x_0 \otimes \eta_0
}
\]
In this case, we have
\begin{align*}
A_{Q(K)} (x_1 \otimes \mu_j) &= A_{Q(K)}(x_0 \otimes \eta_0) + s - j  = -\tau(K) - j -2 \\
A_{Q(K)} (y_1 \otimes \mu_j) &= A_{Q(K)}(x_0 \otimes \eta_0) + s - j +2  = -\tau(K) - j
\intertext{and, in the case where $\epsilon(K)=1$,}
A_{Q(K)} (x_3 \otimes \lambda) &= A_{Q(K)} (y_2 \otimes \xi_0) = -\tau(K) - 1\\
A_{Q(K)} (y_3 \otimes \lambda) &= A_{Q(K)} (x_3 \otimes \lambda) + 1 = -\tau(K).
\end{align*}
A simple change of basis shows that the free part of the homology is generated by $y_2 \otimes \xi_0$ if $\epsilon(K)=-1$, but by $y_3 \otimes \lambda$ if $\epsilon(K)=1$. Thus, in this case,
\[
\tau(Q(K)) =
\begin{cases}
\tau(K) & \epsilon(K)=1 \\
\tau(K)+1 & \epsilon(K)=-1.
\end{cases}
\]

\item
When $\tau(K)=0$, the unstable chain in $\CFD(X_K)$ has the form
\[
\xymatrix{
\lambda \ar@{-->}[r]^{D_2} & \xi_0 \ar[r]^{D_{12}} & \eta_0,
}
\]
where again $\lambda$ is included if $\epsilon(K)=1$. The differential on $W$ is
\[
\xymatrix{
x_3 \otimes \lambda \ar@{-->}[d]^{U} \ar@{-->}[dr] &
x_2 \otimes \xi_0 \ar[dr] \ar[d]^{U} & \\
y_3 \otimes \lambda & y_2 \otimes \xi_0 & x_0 \otimes \eta_0.
}
\]
Just as in the previous case, we obtain
\[
\tau(Q(K)) =
\begin{cases}
0 & \epsilon(K)=1 \\
1 & \epsilon(K)=-1.
\end{cases} \qedhere
\]
\end{itemize}
\end{proof}

\section{Computation of \texorpdfstring{$\epsilon(Q(K))$}{\textepsilon(Q(K))}} \label{sec:epsilon}

Our next task is to compute $\epsilon(Q(K))$, completing the proof of Theorem \ref{thm:Q}. Specifically, we claim that
\begin{equation} \label{eq:epsilon(Q(K))}
\epsilon(Q(K)) = \begin{cases} 0 & \epsilon(K)=0 \\ 1 & \epsilon(K) \ne 0. \end{cases}
\end{equation}
To see this, we will directly compute the values of $\tau$ for the $(2,1)$ and $(2,-1)$ cables of $Q(K)$. For any knot $J$, let $J_{p,q}$ denote the $(p,q)$ cable of $J$. Hom \cite{HomTau} found a formula for $\tau(J_{p,q})$ in terms of $\tau(J)$ and $\epsilon(J)$. Specifically, in the cases where $p=2$ and $q=\pm1$, the formula states:
\begin{theorem} \label{thm:hom}
For any knot $J \subset S^3$, we have:
\begin{align}
\label{eq:J21}
\tau(J_{2,1}) &=
\begin{cases}
2 \tau(J) +1 & \text{if } \epsilon(J) = -1 \\
0 & \text{if } \epsilon(J)=0 \\
2 \tau(J) & \text{if }\epsilon(J) = 1.
\end{cases} \\
\label{eq:J2-1}
\tau(J_{2,-1}) &=
\begin{cases}
2 \tau(J)  & \text{if } \epsilon(J) = -1 \\
0 & \text{if } \epsilon(J)=0 \\
2 \tau(J) -1 & \text{if }\epsilon(J) = 1.
\end{cases}
\end{align}
In particular, $\epsilon(J) = -1$ if and only if $\tau(J_{2,1})$ is odd, and $\epsilon(J)=1$ if and only if $\tau(J_{2,-1})$ is odd.
\end{theorem}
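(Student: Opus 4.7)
The plan is to derive both \eqref{eq:J21} and \eqref{eq:J2-1} by the same bordered Floer strategy that was carried out in Section \ref{sec:tau} to compute $\tau(Q(K))$; the $(2,\pm 1)$ cable patterns already appear in Figure \ref{fig:cables}(b). First, I would compute $\CFAm(V, C_{2,\pm 1})$ directly from the genus-one doubly-pointed bordered Heegaard diagrams. This step parallels Lipshitz--Ozsv\'ath--Thurston's derivation of $\CFAm(V, C)$ recorded in Lemma \ref{lemma:CFAm(V,C)}: enumerate domains bounded by the $\alpha$ arcs and $\beta$ circle and read off the $\AA_\infty$ operations, tracking $U$-powers coming from domains that cover $w$. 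Because both diagrams are planar once cut open, the combinatorics of flows is rigid enough to finish by hand.

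With the pattern modules in hand, the pairing theorem (Theorem \ref{thm:pairing}) yields
\[
\gCFKm(S^3, J_{2,\pm 1}) \simeq \CFAm(V, C_{2,\pm 1}) \boxtimes \CFD(X_J),
\]
and Theorem \ref{thm:cfkcfd} decomposes $\CFD(X_J)$ into vertical chains, horizontal chains, and an unstable chain, with distinguished idempotent-zero generators $\xi_0, \eta_0$ whose positions relative to the horizontal and vertical chains are controlled by $\epsilon(J)$. I would then calibrate absolute Alexander gradings using Proposition \ref{prop:Aglue}: the $(2, \pm 1)$-cable of the unknot is again the unknot, so pairing $\CFAm(V, C_{2,\pm 1})$ with $\CFD(X_U)$ must produce an $\F[U]$-tower concentrated in Alexander grading $0$, and this pins down the constant $C_a$ for every idempotent-zero generator $a$ of the pattern module.

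The proof then splits according to $\epsilon(J) \in \{-1, 0, 1\}$. In each case I would isolate a direct summand $W$ of the tensor product containing all generators coming from the unstable chain, together with the adjacent horizontal chain when $\epsilon(J) = 1$ (containing the element $\tilde\eta_2 = \tilde\xi_0$) or vertical chain when $\epsilon(J) = -1$ (containing $\tilde\xi_1 = \tilde\eta_0$); by Theorem \ref{thm:cfkcfd} and the structural lemmas \ref{lemma:D1D2D3(xi0)} and \ref{lemma:D3(xi2)}, every other summand pairs to an acyclic piece and contributes only $U$-torsion. Computing $H_*(W)$ then locates a unique $U$-tower generator, whose Alexander grading (read off through Proposition \ref{prop:Aglue} and the constants $C_a$) equals $-\tau(J_{2,\pm 1})$. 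When $\epsilon(J) = 0$, the complex $\CFD(X_J)$ has a summand isomorphic to $\CFD(X_U)$, and the unknot calibration gives $\tau(J_{2, \pm 1}) = 0$ directly. The final \emph{in particular} clause follows by inspecting parities of the resulting three-case formulas: in \eqref{eq:J21} only $\epsilon(J) = -1$ produces an odd value, and in \eqref{eq:J2-1} only $\epsilon(J) = 1$ does.

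The main obstacle is the first step: producing an explicit description of $\CFAm(V, C_{2, \pm 1})$ complete with all higher $\AA_\infty$ operations and their $U$-weights. Multi-corner domains in the genus-one cable diagrams generate a nontrivial family of operations of the form $m_{k+1}(x, \rho_{i_1}, \dots, \rho_{i_k}) = U^n y$ analogous to \eqref{eq:CFAm(V,C)}, and one must ensure the list is both complete (no missing disks) and satisfies the $\AA_\infty$ relations as a sanity check. A useful cross-check, in the spirit of Remark \ref{rmk:Q-heegaard}, is to pair the candidate module with $\CFD(X_U)$ and confirm one recovers the unknot complex with the expected grading. Once the pattern modules are correct, the remaining pairing and case analysis proceed as routine bookkeeping, following the template established in the proof of \eqref{eq:tau(Q(K))}.
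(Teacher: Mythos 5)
There is nothing in this paper to compare your argument against: Theorem \ref{thm:hom} is not proved here but imported from Hom \cite{HomTau}, where the general $(p,pn+1)$-cabling formula is established (the $(2,\pm1)$ cases stated here are special cases). That said, your outline is essentially a reconstruction of Hom's proof: compute the pattern modules $\CFAm(V,C_{2,\pm1})$, pair with $\CFD(X_J)$ as described by Theorem \ref{thm:cfkcfd}, and run a case analysis over $\tau(J)$ and $\epsilon(J)$ to locate the $\F[U]$-tower. The one genuine methodological difference is the grading calibration: Hom reads the $A$-side contribution to the Alexander grading directly off the doubly pointed Heegaard diagram (this is exactly the contrast drawn in Remark \ref{rmk:HomPetkova}), whereas you propose the unknot-calibration trick via Proposition \ref{prop:Aglue}, the device this paper introduces for $Q$ precisely because its pattern module is not computed from a diagram; since $T(2,\pm1)$ is unknotted, your route works and is arguably cleaner once Proposition \ref{prop:Aglue} is available. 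Two smaller corrections: Figure \ref{fig:cables}(b) depicts only $C_{2,1}$, and the $(2,-1)$ module is obtained either from a different diagram or, as in Lemma \ref{lemma:CFAm(V,C2-1)}, by tensoring with $\CFDA(\tau_m^{-1})$; and your appeal to Lemmas \ref{lemma:D1D2D3(xi0)} and \ref{lemma:D3(xi2)} is out of place, since those lemmas are tailored to specific $\AA_\infty$ operations of $\CFAm(V,Q_{2,1})$ (the $m_4(\cdot,\rho_3,\rho_2,\rho_1)$ and $m_2(\cdot,\rho_3)$ terms) rather than to the plain cable patterns, for which the summand containing the tower is isolated by the much simpler structure of Lemma \ref{lemma:CFAm(V,C21)}. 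Beyond that, your proposal defers exactly the computations (complete operation lists with $U$-weights, and the $\tau(J)>0$, $=0$, $<0$ subcases within each value of $\epsilon(J)$) that constitute the real content of Hom's argument, so as a plan it is sound but not yet a proof.
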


The first case in \eqref{eq:epsilon(Q(K))} is straightforward: If $K$ is any knot with $\epsilon(K)=0$, the $\tau$ invariant of any satellite of $K$ is the same as the $\tau$ invariant of the corresponding satellite of the unknot $O$, since $\CFD(X_K)$ has a summand that is isomorphic to $\CFD(X_O)$. (See \cite[Section 4.3]{HomTau}.) In particular, $\tau(Q(K)_{2,1} =\tau(Q(K)_{2,-1}) = 0$, which implies that $\epsilon(Q(K))=0$.

For knots with $\epsilon(K) \ne 0$, the bulk of \eqref{eq:epsilon(Q(K))} follows from the following proposition:

\begin{proposition} \label{prop:not-1}
For any knot $K \subset S^3$, we have $\tau(Q(K)_{2,1}) = 2 \tau(Q(K))$. Therefore, $\epsilon(Q(K)) \ne -1$; whenever $\tau(Q(K)) \ne 0$, $\epsilon(Q(K))=1$.
\end{proposition}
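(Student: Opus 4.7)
The plan is to mirror the Section~\ref{sec:tau} computation of $\tau(Q(K))$, now applied to the cabled pattern $P := C_{2,1}(Q) \subset V$, so that $P(K) = Q(K)_{2,1}$ and
$$
\CFKm(S^3, Q(K)_{2,1}) \simeq \CFAm(V, P) \boxtimes \CFD(X_K).
$$
The first task is to produce an explicit description of $\CFAm(V, P)$. Following the template of Proposition~\ref{prop:CFDm(V,Q)}, I would compute $\CFAm(V, C_{2,1})$ directly from the genus-one Heegaard diagram in Figure~\ref{fig:cables}(b), and then form
$$
\CFDm(V, P) \simeq \CFAm(V, C_{2,1}) \boxtimes_{\AA_\rho} \CFDD(X(L_Q)),
$$
using the $\CFDD$ bimodule tabulated at the end of Section~\ref{sec:2bridge}. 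After edge reduction, one dualizes to $\CFAm(V, P)$ via the $\CFAA(\I)$ pairing exactly as in the proof of Proposition~\ref{prop:CFAm(V,Q)}.

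Next I would pin down Alexander gradings via Proposition~\ref{prop:Aglue}. Since $Q(U) = U$, one has $P(U) = U_{2,1} = U$, and $P$ has winding number $\pm 2$ (the $(2,1)$ cable doubles the winding number of $Q$). The resulting requirement that $\CFAm(V,P) \boxtimes \CFD(X_U)$ compute $\HFKm(S^3,U) = \F[U]$ concentrated in Alexander grading zero determines the constants $C_a$ for each homogeneous generator $a \in \CFAm(V,P)\cdot \iota_0$.

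With these pieces in hand, I would tensor $\CFAm(V, P)$ with $\CFD(X_K)$ as described by Theorem~\ref{thm:cfkcfd} and isolate a direct summand $W'$ analogous to the summand $W$ identified in the proof of \eqref{eq:tau(Q(K))}: a subcomplex built from the unstable chain together with a small collection of horizontal/vertical chain generators, which carries the $\F[U]$-tower of $\HFKm(S^3, Q(K)_{2,1})$. The analysis then splits into the same cases on $\tau(K)$ and $\epsilon(K)$ used in Section~\ref{sec:tau}, with Lemmas~\ref{lemma:D1D2D3(xi0)} and~\ref{lemma:D3(xi2)} invoked precisely to rule out stray compositions in the borderline subcase $\epsilon(K)=-1$, $k_1=1$. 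In each case one reads off the generator of the free tower, computes its Alexander grading from Proposition~\ref{prop:Aglue} with $m = \pm 2$, and compares with the value of $\tau(Q(K))$ given by \eqref{eq:tau(Q(K))}, confirming $\tau(Q(K)_{2,1}) = 2\tau(Q(K))$.

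The second assertion is then formal: by Hom's formula (Theorem~\ref{thm:hom}, \eqref{eq:J21}), $\tau(J_{2,1}) - 2\tau(J)$ is odd precisely when $\epsilon(J) = -1$, so equality $\tau(Q(K)_{2,1}) = 2\tau(Q(K))$ forces $\epsilon(Q(K)) \ne -1$. Since $\epsilon = 0$ implies $\tau = 0$, whenever $\tau(Q(K)) \ne 0$ the only remaining possibility is $\epsilon(Q(K)) = 1$. The hard part will be Step~2 and the ensuing case analysis: $\CFAm(V,P)$ is appreciably larger than $\CFAm(V,Q)$ and carries more elaborate higher $\AA_\infty$-operations produced by the cabling, so the bookkeeping to identify the correct generating cycle of the $\F[U]$-tower---and to verify that no subtle differential from elsewhere in the tensor product cancels it---is where the bulk of the technical work concentrates.
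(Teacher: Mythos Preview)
Your plan is correct and matches the paper's proof essentially line for line: the paper computes $\CFDm(V,Q_{2,1}) \simeq \CFAm(V,C_{2,1}) \boxtimes_{\AA_\rho} \CFDD(X(L_Q))$, dualizes to $\CFAm(V,Q_{2,1})$, pins down the constants $C_a$ via $Q_{2,1}(U)=U$, and then isolates a summand $W$ of $\CFAm(V,Q_{2,1}) \boxtimes \CFD(X_K)$ carrying the $\F[U]$-tower, doing the same case split on $(\tau(K),\epsilon(K))$ as in Section~\ref{sec:tau}. One small correction: Lemma~\ref{lemma:D1D2D3(xi0)} is needed in \emph{every} case (not only the $\epsilon(K)=-1$, $k_1=1$ subcase) to rule out differentials out of $W$ arising from the new length-three multiplications $m_4(x_2,\rho_3,\rho_2,\rho_1)$ and $m_4(y_2,\rho_3,\rho_2,\rho_1)$ present in $\CFAm(V,Q_{2,1})$ but absent from $\CFAm(V,Q)$; Lemma~\ref{lemma:D3(xi2)} is the one specific to the $k_1=1$ subcase.
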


Note that Proposition \ref{prop:not-1} suffices for the proof of Theorem \ref{thm:nonsurjective}, which in turn implies Theorem \ref{thm:main}. The only remaining cases in \eqref{eq:epsilon(Q(K))} are those where $\tau(Q(K))=0$, in which case the fact that $\tau(Q(K)_{2,1})=0$ does not determine whether $\epsilon(Q(K))=0$ or $1$. These cases are treated as follows:

\begin{proposition} \label{prop:finalcases}
For any knot $K \subset S^3$ with either $\tau(K)=-1$ and $\epsilon(K)=-1$, or $\tau(K)=0$ and $\epsilon(K)=1$, we have $\tau(Q(K)_{2,-1}) = 1$. Therefore, $\epsilon(Q(K))=1$.
\end{proposition}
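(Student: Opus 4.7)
The plan is to reduce Proposition \ref{prop:finalcases} to Theorem \ref{thm:hom} by computing $\tau(Q(K)_{2,-1})$ directly and showing it equals $1$ in both cases. By \eqref{eq:tau(Q(K))}, the hypotheses $\tau(K)=-1,\epsilon(K)=-1$ or $\tau(K)=0,\epsilon(K)=1$ both force $\tau(Q(K))=0$. Granting $\tau(Q(K)_{2,-1})=1$, formula \eqref{eq:J2-1} then rules out $\epsilon(Q(K))=0$ (which would give $\tau(Q(K)_{2,-1})=0$) and $\epsilon(Q(K))=-1$ (which would give $\tau(Q(K)_{2,-1})=2\tau(Q(K))=0$), forcing $\epsilon(Q(K))=1$.

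The first step is to compute $\CFAm(V,Q_{2,-1})$, where $Q_{2,-1}$ is the $(2,-1)$-cable pattern of $Q$ in $V$. I would do this in parallel with the companion computation of $\CFAm(V,Q_{2,1})$ that underlies Proposition \ref{prop:not-1}: form the pairing
\[
\CFDm(V,Q_{2,-1}) \simeq \CFAm(V,C_{2,-1}) \boxtimes_{\AA_\rho} \CFDD(X(L_Q)),
\]
with $\CFDD(X(L_Q))$ taken from Section \ref{sec:2bridge} and $\CFAm(V,C_{2,-1})$ read off from a doubly-pointed bordered diagram that is a minor modification of Figure \ref{fig:cables}(b); then simplify by edge reduction and dualize. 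To promote the output to absolute Alexander gradings, I would invoke Proposition \ref{prop:Aglue} together with the fact that $Q(U)_{2,-1}=U$ (since $Q(U)=U$ and the $(2,\pm1)$-cable of the unknot is unknotted): tensoring $\CFAm(V,Q_{2,-1})$ with $\CFD(X_U)$ must recover $\HFKm(U)=\F[U]$ concentrated in Alexander grading zero, and this pins down every constant $C_a$ associated to the $\iota_0$ generators $a$ of $\CFAm(V,Q_{2,-1})$. Note that the winding number of $Q_{2,-1}$ (with the sign convention of Section \ref{sec:2bridge}) is $m=-2$, so Proposition \ref{prop:Aglue} reads $A_{Q(K)_{2,-1}}(a\otimes x)=-2\,A_K(x)+C_a$.

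With these ingredients, I would analyze $\CFAm(V,Q_{2,-1})\boxtimes \CFD(X_K)$ case by case, isolating in each case a small direct summand $W$ carrying the $\F[U]$-free part of the homology, in the spirit of the proof of \eqref{eq:tau(Q(K))}. Theorem \ref{thm:cfkcfd} describes $\CFD(X_K)$: when $\tau(K)<0$ the unstable chain has the form $\xi_0 \xrightarrow{D_{123}}\mu_1\xrightarrow{D_{23}}\cdots\xrightarrow{D_{23}}\mu_s\xrightarrow{D_2}\eta_0$, while when $\tau(K)=0,\epsilon(K)=1$ it degenerates to $\xi_0\xrightarrow{D_{12}}\eta_0$ with an additional horizontal tail ending at $\xi_0=\eta_2$. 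The compositions of coefficient maps on the $\CFD(X_K)$ side that could potentially kill the candidate $\tau$ class are controlled by Lemmas \ref{lemma:D1D2D3(xi0)} and \ref{lemma:D3(xi2)}, the latter applying in the $\tau(K)=-1,\epsilon(K)=-1$ case (after passing, if necessary, to a filtered homotopy equivalent basis in which the vertical arrow from $\tilde\eta_0=\tilde\xi_1$ has length one, which does not affect $\tau$ of satellites). Combining these vanishings with the multiplications in $\CFAm(V,Q_{2,-1})$ and the Alexander grading formula above, I expect a cycle in Alexander grading $-1$ to generate the $\F[U]$-tower and no cycle in higher grading to survive, yielding $\tau(Q(K)_{2,-1})=1$ in both cases.

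The main obstacle will be the sheer combinatorial bulk of the modules. $\CFDD(X(L_Q))$ already has $34$ generators with elaborate differentials, and pairing with the $(2,-1)$-cable module essentially doubles the generator count relative to $\CFAm(V,Q)$; edge reduction must be done carefully enough to retain the higher $\AA_\infty$ operations (in particular, sequences involving $\rho_{123}$ analogous to those in Proposition \ref{prop:CFAm(V,Q)}) that pair with the unstable chain of $\CFD(X_K)$. A secondary delicacy is the Alexander-grading calibration: because a single miscomputed constant $C_a$ would shift the answer by an integer and destroy the conclusion, the unknot normalization must be checked against all generators of $\iota_0\cdot\CFAm(V,Q_{2,-1})$ that could contribute to the $\F[U]$-tower, not only the ones that appear in $W$ for the specific knots above.
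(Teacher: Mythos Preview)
Your plan is essentially the paper's own approach: compute $\CFDm(V,Q_{2,-1}) \simeq \CFAm(V,C_{2,-1}) \boxtimes_{\AA_\rho} \CFDD(X(L_Q))$, pass to the type-$A$ side, calibrate absolute Alexander gradings using $Q_{2,-1}(U)=U$, and then isolate a small direct summand $W$ of $\CFAm(V,Q_{2,-1})\boxtimes\CFD(X_K)$ carrying the $\F[U]$-free part. The paper carries this out explicitly (Lemma~\ref{lemma:CFAm(V,C2-1)}, Proposition~\ref{prop:CFDm(V,Q2-1)}, and the final proof), and your outline matches it step for step.

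One correction: you cannot in general pass to a filtered homotopy-equivalent basis in which the vertical arrow out of $\tilde\eta_0=\tilde\xi_1$ has length $k_1=1$; that length is constrained by the filtered homotopy type of $\CFKm(S^3,K)$. The paper handles the $\epsilon(K)=-1$ situation by splitting into the sub-cases $k_1=1$ and $k_1>1$. When $k_1>1$ the extra generators in $W$ involve $\kappa'=\kappa^1_{k_1-1}$ rather than $\xi_2$, and no analogue of Lemma~\ref{lemma:D3(xi2)} is needed since the only outgoing coefficient map from $\kappa'$ is $D_{23}$, which lands in $\kappa\in W$. Lemma~\ref{lemma:D3(xi2)} is invoked precisely in the $k_1=1$ sub-case, where the extra generator is $\xi_2$ and one must rule out a stray $D_3$. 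With that adjustment your argument goes through as in the paper.
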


We shall provide a detailed proof of Proposition \ref{prop:not-1}, and then sketch the modifications needed for Proposition \ref{prop:finalcases}.

To prove Proposition \ref{prop:not-1}, let $C_{2,1}$ denote a $(2,1)$ curve in the solid torus $V$, represented by the Heegaard diagram in Figure \ref{fig:cables}(b), and let $Q_{2,1} \subset V$ be the $(2,1)$ cable of $Q$, obtained by gluing $(V,C_{2,1})$ to the $2$-bridge link component $X(L)$. The gluing theorem states that
\[
\CFDm(V,Q_{2,1}) \simeq \CFAm(V,C_{2,1}) \boxtimes_{\AA_\rho} \CFDD(X(L)).
\]
As in the previous section, this describes the orientation of $Q_{2,1}$ whose winding number is $-2$.

According to Hom \cite[Section 4.1]{HomTau}, we have:

\begin{lemma} \label{lemma:CFAm(V,C21)}
The type $A$ module $\CFAm(V,C_{2,1})$ has generators $a,b,c$, with $\AA_\infty$ multiplications given by:
\[
\begin{aligned}
m_{3+i}(a, \rho_3, \underbrace{\rho_{23}, \dots, \rho_{23}}_{i}, \rho_2) &= U^{2i+2} \cdot a \text{ for all } i \ge 0 \\
m_{4+i}(a, \rho_3, \underbrace{\rho_{23}, \dots, \rho_{23}}_{i}, \rho_2, \rho_1) &= U^{2i+1} \cdot b \text{ for all } i \ge 0 \\
m_2(a, \rho_1) &= c \\
m_1(b) &= U \cdot c.
\end{aligned}
\]
\end{lemma}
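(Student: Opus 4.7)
The plan is to compute $\CFAm(V, C_{2,1})$ directly from the doubly-pointed bordered Heegaard diagram $(\HH, z, w)$ shown in Figure \ref{fig:cables}(b). The module has three generators corresponding to the three intersections $\alpha \cap \beta$, which we label $a, b, c$: the point on $\alpha_2^a$ is $a$ (so $a \cdot \iota_0 = a$) and the two points on $\alpha_1^a$ are $b$ and $c$ (so $b \cdot \iota_1 = b$ and $c \cdot \iota_1 = c$).

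Two elementary domains can be read off by inspection. A small bigon from $a$ to $c$ whose boundary on $\partial V$ traverses the Reeb chord $\rho_1$ yields $m_2(a, \rho_1) = c$, and a second bigon from $b$ to $c$ containing the basepoint $w$ with multiplicity $1$ and no boundary Reeb chords yields $m_1(b) = U \cdot c$. These account for the last two formulas in the statement.

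For the longer multiplications I would identify an infinite family of positive domains indexed by $i \ge 0$. For each $i$, a domain $B_i$ runs from $a$ back to $a$ with boundary Reeb chord sequence $(\rho_3, \rho_{23}, \dots, \rho_{23}, \rho_2)$ (with $i$ copies of $\rho_{23}$), and a closely related domain $B_i'$ runs from $a$ to $b$ with sequence $(\rho_3, \rho_{23}, \dots, \rho_{23}, \rho_2, \rho_1)$. The multiplicities at $w$ are governed by the winding number of $C_{2,1}$, which with our gluing conventions is $-2$; using the meridional periodic domain $\PP_\mu$ of Section \ref{sec:satellite} one computes $n_w(B_i) = 2i + 2$ and $n_w(B_i') = 2i + 1$, matching the stated powers of $U$.

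The main obstacle is enumerating the domains completely and computing the mod-$2$ count of holomorphic representatives for each, while ruling out other contributions. This is handled by passing to the universal cover of the torus, in which each $B_i$ and $B_i'$ lifts to a convex embedded polygonal region admitting a single moduli class of holomorphic representatives up to reparametrization. Since the analogous calculation is carried out in detail by Hom in \cite[Section 4.1]{HomTau} for general $(p,q)$-cable patterns, we may appeal directly to her result in the case $(p,q) = (2,1)$. As an independent consistency check, the box tensor product $\CFAm(V, C_{2,1}) \boxtimes \CFD(X_U)$ must have homology $\CFKm(S^3, U) \cong \F[U]$, since the $(2,1)$-cable of the unknot is the unknot; inspecting the resulting complex against the stated multiplications confirms the $U$-exponents.
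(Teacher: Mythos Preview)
Your approach is essentially the same as the paper's: the paper does not give its own proof of this lemma but simply attributes it to Hom \cite[Section~4.1]{HomTau}, and you likewise defer to Hom for the holomorphic count after sketching the direct Heegaard-diagram analysis. The extra context you provide (identifying the generators and their idempotents, describing the bigons and the families $B_i, B_i'$, and the unknot consistency check) is reasonable elaboration but not required.

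One small correction: you write that the winding number of $C_{2,1}$ ``with our gluing conventions is $-2$.'' This conflates $C_{2,1}$ with $Q_{2,1}$. The curve $C_{2,1}$ itself represents $2\lambda_V + \mu_V$ in $V$ and so has winding number $+2$; it is the composite pattern $Q_{2,1}$, obtained \emph{after} gluing $(V,C_{2,1})$ to the two-bridge link complement, that acquires winding number $-2$ (see the remark following the definition of $\CFDm(V,Q_{2,1})$ in Section~\ref{sec:epsilon}). The periodic-domain argument for $n_w(B_i)$ should therefore invoke $m=+2$. Your stated values $n_w(B_i)=2i+2$ and $n_w(B_i')=2i+1$ are nonetheless correct, and since you ultimately appeal to Hom this slip does not affect the argument.
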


\begin{figure}[t]
\[
\xymatrix{
p_1 \ar[d]^{U^2} & p_2 \ar[d]^{U} \ar[l]_{\sigma_1} \ar[r]^{\sigma_3}
& p_3 \ar[d]^{U} \ar[r]^{\sigma_2} & p_4 \ar[d]^{U} \ar[r]^{\sigma_1} & p_5 \ar[d]^{U}
& x_0 \ar[l]^{U\sigma_{123}} \ar[dr]|{U^2 \sigma_1} & x_1 \ar[d]|{U^4} \ar[l]_{\sigma_2} \ar@/_1.5pc/[ll]_{U^3 \sigma_{23}}
& x_2 \ar[d]|{U^2} \ar[l]_{\sigma_3} \ar@/_4pc/[lllllll]_{U\sigma_{123}}
& x_3 \ar[d]|(0.45){U^2} \ar[dl]|{\sigma_2} & x_4 \ar[d]^{U^2} \ar[l]_{\sigma_3} \ar[ldddd]_(0.8){U\sigma_3} \ar@/^1pc/[ddddlllllll]_(0.6){U\sigma_{123}} & \\
q_1 & q_2 \ar[l]_{U \sigma_1} \ar[r]_{\sigma_3}
& q_3 \ar[r]_{\sigma_2} & q_4 \ar[r]_{\sigma_1} & q_5 &
& y_1 \ar[ll]_{\sigma_{23}} & y_2 \ar[l]_{U^2 \sigma_3} \ar@/^2.5pc/[lllllll]_{U\sigma_{123}}
& y_3 \ar@/^2pc/[ll]^{\sigma_{23}} & y_4 \ar[l]_{\sigma_3} \ar@/^1pc/[ddddlllllll]_(0.4){U\sigma_{123}} \ar[ldddd]^{\sigma_3}
& \\
\\ \\
& & r_1 \ar[d]^{U^2} & r_2 \ar[d]|(0.4){U} \ar[l]^{\sigma_1} \ar[r]^{\sigma_3}
& r_3 \ar[d]^{U} \ar[r]^(0.4){\sigma_2} & r_4 \ar[d]^{U} \ar[r]^{\sigma_1} & r_5 \ar[d]^{U}
& r_6 \ar[d]_{U} \ar[l]^{\sigma_{123}} \ar@/^1pc/[uuulllllll]^(0.6){\sigma_{123}} & r_7 \ar[d]_{U} \ar[l]_{\sigma_2}
& r_8 \ar[d]^{U} \ar[l]_(0.4){\sigma_{23}} & r_9 \ar[d]^{U} \ar[l]_{\sigma_3} \\
& & s_1 & s_2 \ar[l]^{U \sigma_1} \ar[r]_{\sigma_3} & s_3 \ar[r]_{\sigma_2} & s_4 \ar[r]_{\sigma_1} & s_5
& s_6 \ar[l]^{\sigma_{123}} & s_7 \ar[l]^{\sigma_2} & s_8 \ar[l]^{\sigma_{23}} & s_9 \ar[l]^{\sigma_3} \\
}
\]
\caption{A summand of $\CFDm(V,Q_{2,1})$.} \label{fig:CFDm(V,Q21)}
\end{figure}

\begin{proposition} \label{prop:CFDm(V,Q21)}
\allowdisplaybreaks
The type $D$ structure $\CFDm(V,Q_{2,1})$ has a direct summand with a basis
\[
\{p_1, \dots, p_5, q_1, \dots, q_5, r_1, \dots, r_9, s_1, \dots, s_9, x_0, \dots, x_4, y_1, \dots, y_4\}
\]
with the following properties:
\begin{itemize}
\item
The generators
\[
p_2, p_4, q_2, q_4, r_2, r_4, r_6, r_9, s_2, s_4, s_6, s_9, x_0, x_2, y_2, x_4, y_4
\]
are in $\iota_0 \CFDm(V,Q_{2,1})$, and the remaining ones are in $\iota_1 \CFDm(V,Q_{2,1})$.

\item
The differential is as shown in Figure \ref{fig:CFDm(V,Q21)}.
\end{itemize}
\end{proposition}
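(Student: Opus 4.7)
The plan is to mimic the proof of Proposition \ref{prop:CFDm(V,Q)}, now using $\CFAm(V,C_{2,1})$ in place of $\CFAm(V,C)$. By the pairing theorem,
\[
\CFDm(V,Q_{2,1}) \simeq \CFAm(V,C_{2,1}) \boxtimes_{\AA_\rho} \CFDD(X(L_Q)),
\]
so the entire computation is driven by Lemma \ref{lemma:CFAm(V,C21)} --- which lists three generators $a, b, c$, with $a$ in the $\iota_0^\rho$ idempotent and $b, c$ in the $\iota_1^\rho$ idempotent --- together with the explicit description of $\CFDD(X(L_Q))$ from Section \ref{sec:2bridge}.

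First I would identify the generators of the claimed summand as specific tensor products. The $a$-pairings $a \otimes g_j$ for $g_j \in \iota_0^\rho \cdot \CFDD(X(L_Q))$ account for the $x_i$ and $y_i$ generators, in close analogy with the proof of Proposition \ref{prop:CFDm(V,Q)}; note that the count of $x_i$'s and $y_i$'s is smaller here than in the $Q$ case because several of the $U$-torsion generators from that computation will now couple to the additional $b$- and $c$-pairings. The $p_i, q_i, r_i, s_i$ generators then arise from $b \otimes g_j$ and $c \otimes g_j$ for $g_j \in \iota_1^\rho \cdot \CFDD(X(L_Q))$, with the vertical $U$-arrows between each $p_i$--$q_i$ and each $r_i$--$s_i$ pair produced directly by the internal $\AA_\infty$ operation $m_1(b) = U c$.

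The differentials in the box product fall into three types. First, any differential in $\CFDD(X(L_Q))$ involving only $\sigma_I$ transfers to the tensor product unchanged, producing the $\sigma$-labeled arrows in Figure \ref{fig:CFDm(V,Q21)}. Second, chains of arrows in $\CFDD(X(L_Q))$ of the form
\[
g_j \xrightarrow{\rho_3} \cdot \xrightarrow{\rho_{23}} \cdots \xrightarrow{\rho_{23}} \cdot \xrightarrow{\rho_2} g_k
\]
are absorbed by the higher multiplications $m_{3+i}(a, \rho_3, \rho_{23}^i, \rho_2) = U^{2i+2} a$, yielding $U$-power differentials between $a$-generators; these are the very chains enumerated in the proof of Proposition \ref{prop:CFDm(V,Q)}, but now weighted by $U^{2i+2}$ instead of $U^{i+1}$. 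Third, chains of the same form extended by a final $\rho_1$, as well as direct $\rho_1$ arrows, are absorbed by $m_{4+i}(a, \rho_3, \rho_{23}^i, \rho_2, \rho_1) = U^{2i+1} b$ and $m_2(a, \rho_1) = c$, producing the diagonal arrows from $a$-generators into $b$- and $c$-generators.

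The main obstacle is the combinatorial bookkeeping: enumerating every admissible $\rho$-chain in $\CFDD(X(L_Q))$ starting from each $\iota_0^\rho$-generator and matching it against the $\AA_\infty$ multiplications of $\CFAm(V,C_{2,1})$. After the raw computation, some edge reduction and a minor change of basis (analogous to Remark \ref{rmk:Q-heegaard}) will likely be needed to massage the result into the clean form displayed in Figure \ref{fig:CFDm(V,Q21)}. Finally, one has to verify that these 37 generators genuinely split off as a direct summand --- i.e., that all differentials emanating from or terminating at them stay within the listed set; this is a direct analogue of the corresponding verification in the $Q$ case, just with more terms to check.
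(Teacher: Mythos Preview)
Your proposal is correct and follows essentially the same approach as the paper: compute the box product $\CFAm(V,C_{2,1}) \boxtimes_{\AA_\rho} \CFDD(X(L_Q))$ using the three types of differentials you describe, discard the evident $U$-torsion summands, and then simplify. The only point worth flagging is that the ``minor change of basis'' is in fact rather substantial --- the paper cancels four acyclic pairs (precisely the $a$-generators coupling to $c$-generators via $m_2(a,\rho_1)=c$, as you anticipated) and redefines roughly fifteen generators before the summand takes the clean form of Figure~\ref{fig:CFDm(V,Q21)}.
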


\begin{proof}
\allowdisplaybreaks
In the tensor product $\CFAm(V,C_{2,1}) \boxtimes_{\AA_\rho} \CFDD(X(L))$, we will denote generators $a \otimes g_i$, $b \otimes g_i$, or $c \otimes g_i$ by $a_i, b_i, c_i$ respectively; there are $55$ generators in all. As in the previous section, it is not hard to verify that the differential is as follows (where terms with coefficient $1$ are indicated in boldface):
\begin{align*}
d(a_1) &= \bm{c_{24}} && \\
d(a_2) &= \sigma_{123} c_{12} + U^2 \sigma_3 a_{30} && \\
d(b_3) &= U c_3 &
d(c_3) &= 0 \\
d(b_4) &= U c_4 + \sigma_3 b_{26} &
d(c_4) &= \sigma_3 c_{26} \\
d(b_5) &= U c_5 + \sigma_1 b_{31} &
d(c_5) &= \sigma_1 c_{31} \\
d(b_6) &= U c_6 + \sigma_2 b_{25} &
d(c_6) &= \sigma_2 c_{25} \\
d(a_7) &= U^2 a_1 + \bm{c_{12}} + U b_{24} + \sigma_{23} c_{31} && \\
d(b_8) &= U c_8 &
d(c_8) &= 0 \\
d(b_9) &= U c_9 &
d(c_9) &= 0 \\
d(b_{10}) &= U c_{10} + \sigma_1 b_{12} + \sigma_{123} b_{31} + \sigma_3 b_{33} &
d(c_{10}) &= \sigma_1 c_{12} + \sigma_{123} c_{31} + \sigma_3 c_{33} & \\
d(a_{11}) &= U b_{14} + \bm{c_{17}} + U^2 a_{19} + \sigma_{23} c_{24}  && \\
d(b_{12}) &= U c_{12} &
d(c_{12}) &= 0 \\
d(a_{13}) &= \sigma_2 a_{27} + U^4 a_{30} + U^3\sigma_{23} b_{31} && \\
d(b_{14}) &= U c_{14} &
d(c_{14}) &= 0 \\
d(a_{15}) &= \sigma_2 c_{25} + \sigma_{23} a_{30} && \\
d(b_{16}) &= \sigma_{123} b_8 + \sigma_1 b_{17} + U c_{16} + \sigma_3 b_{22}  &
d(c_{16}) &= \sigma_{123} c_8 + \sigma_1 c_{17} + \sigma_3 c_{22} \\
d(b_{17}) &= U c_{17} &
d(c_{17}) &= 0 \\
d(a_{18}) &= \sigma_2 a_2 + U^2 a_{15} + U \sigma_2 b_{25} &&  \\
d(a_{19}) &= \bm{c_{14}} && \\
d(b_{20}) &= \sigma_2 b_9 +  U c_{20} &
d(c_{20}) &= \sigma_2 c_9 \\
d(a_{21}) &= \sigma_3 a_{15} + \sigma_{123} c_{17} && \\
d(b_{22}) &= U c_{22} + \sigma_2 b_{23}  &
d(c_{22}) &= \sigma_2 c_{23} \\
d(b_{23}) &= \sigma_1 b_8 + U c_{23} &
d(c_{23}) &= \sigma_1 c_8 \\
d(b_{24}) &= U c_{24} &
d(c_{24}) &= 0 \\
d(b_{25}) &= \sigma_{123} b_{24} + \sigma_{123} b_8 + U c_{25} &
d(c_{25}) &= \sigma_{123} c_{24} + \sigma_{123} c_8 \\
d(b_{26}) &= \sigma_{23} b_6 + U c_{26} &
d(c_{26}) &= \sigma_{23} c_6 \\
d(a_{27}) &= U^2 \sigma_1 a_{30} + U \sigma_{123} b_{31} && \\
d(b_{28}) &= U c_{28} &
d(c_{28}) &= 0 \\
d(a_{29}) &= U \sigma_{123} b_{17} + \sigma_3 a_{18} + U^2 a_{21} && \\
d(a_{30}) &= \sigma_{23} c_{31} && \\
d(b_{31}) &= U c_{31} &
d(c_{31}) &= 0 \\
d(b_{32}) &= \sigma_3 b_{20} +  U c_{32} &
d(c_{32}) &= \sigma_3 c_{20} \\
d(b_{33}) &= \sigma_2 b_5 +  U c_{33} &
d(c_{33}) &= \sigma_2 c_5 \\
d(a_{34}) &= U^2 a_2 + \sigma_3 a_{13} + U \sigma_{123} b_{12} &&
\end{align*}

First, observe that the sets $\{b_9, c_9, b_{20}, c_{20}, b_{32}, c_{32}\}$, $\{b_3,c_3\}$, and $\{b_{28}, c_{28}\}$ each generate isolated summands whose tensor products with $\CFD(X_K)$ will always be torsion $\F[U]$-modules. These summands will not affect $\tau(Q(K)_{2,1})$, so we may disregard them.

Next, we perform a change of basis that cancels the four terms indicated in boldface above and further simplifies the differential. Define:
\begin{align*}
a_{2}' &= a_2 + \sigma_{123} a_7 &
c_{10}' &= c_{10} + \sigma_1 a_7 \\
a_{11}' &= a_{11} + \sigma_{23} a_1 &
b_{12}' &= b_{12} + U a_7 + \sigma_{23} b_{31} \\
c_{12}' &= c_{12} + U^2 a_1 + U b_{24} + \sigma_{23} c_{31} &
b_{14}' &= b_{14} + U a_{19} \\
a_{15}' &= a_{15} + c_6 &
b_{16}' &= b_{16} + b_{25} \\
c_{16}' &= c_{16} + \sigma_1 a_{11} + c_{25} &
b_{17}' &= b_{17} + U a_{11} + \sigma_{23} b_{24} \\
c_{17}' &= c_{17} + U b_{14} + U^2 a_{19} &
a_{18}' &= a_{18} + U b_6 \\
a_{21}' &= a_{21} + \sigma_{123} a_{11}  &
b_{24}' &= b_{24} + U a_1 \\
c_{25}' &= c_{25} + \sigma_{123} a_1 .
\end{align*}
Note that $d(a_7) = c_{12}'$ and $d(a_{11}') = c_{17}'$. The sets $\{a_1, c_{24}\}$, $\{a_{19}, c_{14}\}$, $\{a_{11}', c_{12}'\}$, and $\{a_{11}', c_{17}'\}$ generate acyclic summands that can be canceled. The differential applied to the remaining primed generators is as follows:
\begin{align*}
d(a_{2}') &= U \sigma_{123} b_{24}' + U^2 \sigma_3 a_{30} &
d(c_{10}') &= U \sigma_1 b_{24}' + \sigma_3 c_{33} \\
d(b_{12}') &= U^2 b_{24}' &
d(b_{14}') &= 0 \\
d(a_{15}') &= \sigma_{23} a_{30} &
d(b_{16}') &= U c_{16}' + \sigma_1 b_{17}' + \sigma_3 b_{22} \\
d(c_{16}') &= U \sigma_1 b_{14}' + \sigma_3 c_{22} &
d(b_{17}') &= U^2 b_{14}' \\
d(a_{18}') &= \sigma_2 a_2' + U^2 a_{15}' &
d(a_{21}') &= U \sigma_{123} b_{14}' + \sigma_3 a_{15} \\
d(b_{24}') &= 0  &
d(c_{25}') &= \sigma_{123} c_8.
\end{align*}
Additionally, in the new basis, we have:
\begin{align*}
d(b_{10}) &= U c_{10}' + \sigma_1 b_{12}' + \sigma_3 b_{33} \\
d(b_{25}) &= \sigma_{123} b_8 + \sigma_{123} b_{24}' + U c_{25}' \\
d(a_{29}) &= U \sigma_3 b_6 + U \sigma_{123} b_{17}' + \sigma_3 a_{18} + U^2 a_{21} \\
d(a_{34}) &= U^2 a_2' + U \sigma_{123} b_{12}' + \sigma_3 a_{13}.
\end{align*}

We now rename the generators as follows:
\begin{align*}
p_1 &= b_{12}'  & p_2 &= b_{10}   & p_3 &= b_{33}  & p_4 &= b_5  &   p_5 &= b_{31} \\
q_1 &= b_{24}'  & q_2 &= c_{10}'  & q_3 &= c_{33}  & q_4 &= c_5  &   q_5 &= c_{31} \\
x_0 &= a_{27}   & x_1 &= a_{13}   & x_2 &= a_{34}  & x_3 &= a_{18}' & x_4 &= a_{29} \\
&               & y_1 &= a_{30}   & y_2 &= a_2'    & y_3 &= a_{15}' & y_4 &= a_{21}' \\
r_1 &= b_{17}'  & r_2 &= b_{16}'  & r_3 &= b_{22}  & r_4 &= b_{23} & r_5 &= b_8 \\
r_6 &= b_{25}   & r_7 &= b_6      & r_8 &= b_{26}  & r_9 &= b_4  & \\
s_1 &= b_{14}'  & s_2 &= c_{16}'  & s_3 &= c_{22}  & s_4 &= c_{23} & s_5 &= c_8 \\
s_6 &= c_{25}   & s_7 &= c_6      & s_8 &= c_{26}  & s_9 &= c_4.  &
\end{align*}
It is simple but tedious to verify that the differential on these generators agrees with the statement of the theorem.
\end{proof}

\begin{figure}
\[
\xymatrix@R=0.6in@C=0.52in{
p_1 \ar[d]|{U^2}
& p_2 \ar[d]|{U} \ar[l]|{\rho_3} \ar[r]|{\rho_1} \ar@/^1pc/[rr]|{\rho_{12}} \ar@/^2pc/[rrr]|{\rho_{123}}
& p_3 \ar[d]|{U} \ar[r]|{\rho_2} \ar@/^1pc/[rr]|{\rho_{23}}
& p_4 \ar[d]|{U} \ar[r]|{\rho_3} & p_5 \ar[d]^{U}
& x_0 \ar[l]|{U\rho_3 \rho_2 \rho_1} \ar[dr]|{U^2 \rho_3}
& x_1 \ar[d]|{\substack{U^4 + \\ U^2 \rho_{23}}} \ar[l]|{\rho_2}
 \ar@/_1.5pc/[ll]|{\substack{U^3 \rho_2 \rho_1 + \\ U \rho_{23} \rho_2 \rho_1 }}
& x_2 \ar[d]|{U^2} \ar[l]|{\rho_1} \ar[dl]|{U^2 \rho_{123}}
 \ar@/_1.5pc/[ll]|{\rho_{12}} \ar@/_3pc/[lll]|{\substack{U^3 \rho_{12} \rho_1 + \\ U \rho_{123} \rho_2 \rho_1 }}
 \ar@/^1.5pc/[lllllll]|{U\rho_3 \rho_2 \rho_1}
& & & \\
q_1 & q_2 \ar[l]|(0.35){U \rho_3} \ar[r]|{\rho_1} \ar@/_1pc/[rr]|{\rho_{12}} \ar@/_2pc/[rrr]|{\rho_{123}}
& q_3 \ar[r]|{\rho_2} \ar@/_1pc/[rr]|{\rho_{23}} & q_4 \ar[r]|{\rho_3} & q_5 &
& y_1 \ar[ll]|{\rho_2 \rho_1}
& y_2 \ar[l]|(0.55){U^2 \rho_1} \ar@/^1.5pc/[lll]|(0.65){U^2 \rho_{12} \rho_1} \ar@/^4pc/[lllllll]|(0.3){U\rho_3 \rho_2 \rho_1} & & & \\
&&&&&&& x_3 \ar[d]|{U^2} \ar[u]|{\rho_2}
& x_4 \ar[l]|{\rho_1} \ar[d]|(0.45){U^2} \ar[lu]|{\rho_{12}}  \ar@/_3pc/[ddddllllllll]|(0.7){U\rho_3 \rho_2 \rho_1}
\ar@/^1pc/[ddddll]|(0.15){U \rho_1} \ar[ddddlll]|(0.4){U \rho_{12}} \ar@/_1pc/[ddddllll]|{U \rho_{123} \rho_2 \rho_1}\\
&&&&&&& y_3 \ar@/^1pc/[luu]|{\rho_2 \rho_1}
& y_4 \ar[l]|{\rho_1} \ar@/^1pc/[lluu]|(0.6){\rho_{12} \rho_1} \ar@/_3pc/[ddddllllllll]|(0.6){U\rho_3 \rho_2 \rho_1}
\ar@/^1pc/[ddddll]|(0.3){\rho_1} \ar[ddddlll]|(0.15){\rho_{12}} \ar@/_1pc/[ddddllll]|(0.25){\rho_{123} \rho_2 \rho_1} \\
\\
\\
r_1 \ar[d]|{U^2}
& r_2 \ar[d]|{U} \ar[l]|{\rho_3} \ar[r]|{\rho_1} \ar@/^1pc/[rr]|{\rho_{12}} \ar@/^2pc/[rrr]|{\rho_{123}}
& r_3 \ar[d]|{U} \ar[r]|{\rho_2} \ar@/^1pc/[rr]|{\rho_{23}}
& r_4 \ar[d]|{U} \ar[r]|{\rho_3}
& r_5 \ar[d]|{U}
& r_6 \ar[d]|{U} \ar[l]|{\rho_3 \rho_2 \rho_1} \ar[uuuuulllll]|(0.36){\rho_3 \rho_2 \rho_1}
& r_7 \ar[d]|{U} \ar[l]|{\rho_2} \ar[uuuuullllll]|(0.39){\rho_{23} \rho_2 \rho_1}
& r_8 \ar[d]|{U} \ar[l]|{\rho_2 \rho_1} \ar[uuuuulllllll]|(0.42){\rho_2 \rho_{123} \rho_2 \rho_1}
& r_9 \ar[d]|{U} \ar[l]|{\rho_1} \ar[uuuuullllllll]|(0.45){\rho_{12} \rho_{123} \rho_2 \rho_1}
\\
s_1
& s_2 \ar[l]|{U \rho_3} \ar[r]|{\rho_1} \ar@/_1pc/[rr]|{\rho_{12}} \ar@/_2pc/[rrr]|{\rho_{123}}
& s_3 \ar[r]|{\rho_2} \ar@/_1pc/[rr]|{\rho_{23}}
& s_4 \ar[r]|{\rho_3} & s_5
& s_6 \ar[l]|{\rho_3 \rho_2 \rho_1}
& s_7 \ar[l]|{\rho_2} \ar@/^1pc/[ll]|{\rho_{23} \rho_2 \rho_1}
& s_8 \ar[l]|{\rho_2 \rho_1} \ar@/^1pc/[ll]|{\rho_2 \rho_{12}} \ar@/^2pc/[lll]|{\rho_2 \rho_{123} \rho_2 \rho_1}
& s_9 \ar[l]|{\rho_1} \ar@/^1pc/[ll]|{\rho_{12} \rho_1} \ar@/^2pc/[lll]|{\rho_{12} \rho_{12}} \ar@/^3pc/[llll]|{\rho_{12} \rho_{123} \rho_2 \rho_1}
}
\]
\caption{A summand of $\CFAm(V, Q_{2,1})$. The only multiplications not shown are the higher multiplications $r_i \to r_j$ for $5 \le j < i \le 9$, which are the same as the ones $s_i \to s_j$. } \label{fig:CFAm(V,Q21)}
\end{figure}

Once again, we may tensor with the identity $AA$ bimodule to obtain:

\begin{proposition} \label{prop:CFAm(V,Q21)}
The type $A$ structure $\CFAm(V,Q_{2,1})$ has a summand with a basis
\[
\{p_1, \dots, p_5, q_1, \dots, q_5, r_1, \dots, r_9, s_1, \dots, s_9, x_0, \dots, x_4, y_1, \dots, y_4\}
\]
whose associated idempotents are just as in Proposition \ref{prop:CFDm(V,Q21)}, and whose $\AA_\infty$ multiplications are as shown in Figure \ref{fig:CFAm(V,Q21)}. \qed
\end{proposition}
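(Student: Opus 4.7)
The plan is to obtain $\CFAm(V, Q_{2,1})$ from the type $D$ structure $\CFDm(V, Q_{2,1})$ of Proposition \ref{prop:CFDm(V,Q21)} by tensoring with the identity $AA$ bimodule $\CFAA(\I)$ of the torus algebra, whose explicit description is given in \cite[Section 10.1]{LOTBimodules}. This is exactly the device used to derive Proposition \ref{prop:CFAm(V,Q)} from Proposition \ref{prop:CFDm(V,Q)}; alternatively, one can apply the recursive algorithm of \cite[Theorem 2.2]{HeddenLevineSplicing}. Since tensoring with the identity $AA$ bimodule preserves the underlying generating set, the basis $\{p_i, q_i, r_i, s_i, x_i, y_i\}$ and the idempotent assignments carry over verbatim from Proposition \ref{prop:CFDm(V,Q21)}.

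The $\AA_\infty$ multiplications on $\CFAm(V, Q_{2,1})$ are then read off from the coefficient maps of the type $D$ structure according to a combinatorial rule dictated by $\CFAA(\I)$. A pure $U$-arrow $x \xrightarrow{U^k} y$ contributes a differential $m_1(x) = U^k y$, and a single Reeb-chord arrow $x \xrightarrow{\sigma_I} y$ contributes $m_2(x, \rho_I) = y$. More generally, any directed path
\[
x_0 \xrightarrow{\sigma_{I_1}} x_1 \xrightarrow{\sigma_{I_2}} \cdots \xrightarrow{\sigma_{I_n}} x_n
\]
(possibly with $U$-labeled edges interleaved) contributes a higher multiplication whenever the concatenation $\sigma_{I_1}\cdots \sigma_{I_n}$ is a nonzero element of the torus algebra $\AA(T^2)$. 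Each way of grouping this product into a sequence of maximal composable Reeb subwords $\rho_{J_1}, \ldots, \rho_{J_k}$ gives rise to an operation $m_{k+1}(x_0, \rho_{J_1}, \ldots, \rho_{J_k}) = U^a x_n$, where $a$ is the sum of the $U$-exponents encountered along the path. This mechanism produces every arrow in Figure \ref{fig:CFAm(V,Q21)}: short operations like $p_2 \xrightarrow{\rho_3} p_1$ come directly from single $D$-structure arrows, while longer operations such as $p_2 \xrightarrow{\rho_{123}} p_5$ arise from paths like $p_2 \xrightarrow{\sigma_3} p_3 \xrightarrow{\sigma_2} p_4 \xrightarrow{\sigma_1} p_5$, and the multi-step maps from $x_4$ or $y_4$ into the $s$-generators arise analogously.

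The main obstacle is bookkeeping: the type $D$ structure in Proposition \ref{prop:CFDm(V,Q21)} is noticeably larger and more interconnected than that in Proposition \ref{prop:CFDm(V,Q)}, with three coupled chains ($p,q$; $r,s$; and $x,y$) and numerous cross-links, so enumerating every admissible composable path---and accumulating the correct $U$-powers along each---requires care, particularly for operations with $k \ge 3$. I would proceed by first recording all $m_1$ and $m_2$ operations directly from the coefficient maps, then systematically traversing each longer composable path, and finally cross-checking the $\AA_\infty$ relations as a consistency test. In practice, the entire computation is most efficiently performed by Zhan's Python package \cite{ZhanComputations}, which contains a built-in routine for the tensor product $\CFAA(\I) \boxtimes \CFD$. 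The symmetry noted in the caption of Figure \ref{fig:CFAm(V,Q21)}, that the higher operations $r_i \to r_j$ coincide with those $s_i \to s_j$ for $5 \le j < i \le 9$, follows immediately from the isomorphic structure of the corresponding sub-chains in $\CFDm(V, Q_{2,1})$.
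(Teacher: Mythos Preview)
Your approach is precisely the paper's: the entire proof there is the single remark ``Once again, we may tensor with the identity $AA$ bimodule,'' and you correctly cite both $\CFAA(\I)$ from \cite[Section~10.1]{LOTBimodules} and the algorithm of \cite[Theorem~2.2]{HeddenLevineSplicing} as the means of carrying this out.

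One point of caution: your informal description of the combinatorial rule is not accurate. A path in the type $D$ structure does \emph{not} contribute according to whether the product $\sigma_{I_1}\cdots\sigma_{I_n}$ is nonzero in $\AA(T^2)$. For instance, the path $p_2\xrightarrow{\sigma_3}p_3\xrightarrow{\sigma_2}p_4$ from Figure~\ref{fig:CFDm(V,Q21)} yields $m_2(p_2,\rho_{12})=p_4$ in Figure~\ref{fig:CFAm(V,Q21)} even though $\sigma_3\sigma_2=0$, and the single arrow $p_2\xrightarrow{\sigma_1}p_1$ becomes $m_2(p_2,\rho_3)=p_1$, not $m_2(p_2,\rho_1)$. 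The genuine dictionary between type $D$ coefficients and type $A$ inputs is encoded in the $\AA_\infty$ operations of $\CFAA(\I)$ and is not a simple relabeling $\sigma_I\mapsto\rho_I$ or a concatenation test on the $\sigma$ side. Since you ultimately defer to the cited algorithm and to Zhan's package for the actual computation, this does not undermine your plan, but the hand description would lead you astray if applied literally.
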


Just as in the previous section, we note that $Q_{2,1}(O)$ is the unknot, and use this fact to pin down the absolute Alexander grading via Proposition \ref{prop:Aglue}. The tensor product $\CFAm(V,Q_{2,1}) \boxtimes \CFD(X_O)$ is
\begin{equation} \label{eq:CFK(Q21(U))}
\xymatrix{
 & x_2 \otimes \xi_0 \ar[dl] \ar[d]^{U^2} & x_4 \otimes \xi_0 \ar[dl] \ar[d]^{U^2} \ar[r]^{U} & r_6 \otimes \xi_0 \ar[d]^{U} & r_9 \otimes \xi_0 \ar[d]^{U} \ar[l] \\
x_0 \otimes \xi_0 & y_2 \otimes \xi_0 & y_4 \otimes \xi_0 \ar[r] & s_6 \otimes \xi_0 & s_9 \otimes \xi_0 \ar[l]
}
\end{equation}
plus summands that do not affect $\tau$. The homology is generated by $(y_4 + s_9) \otimes \xi_0$, which must have Alexander grading $0$. As in Lemma \ref{lemma:CFK(Q(U))}, we conclude that $C_{x_0} = C_{x_2} = -4$, $C_{y_2} = C_{y_4} = -2$, and $C_{y_4} = 0$.

\begin{proof}[Proof of Proposition \ref{prop:not-1}]
Let $K \subset S^3$ be any knot with $\epsilon(K) \ne 0$. (The case where $\epsilon(K)=0$ was discussed earlier.) As in the previous section, we consider a subspace of $\CFK(S^3, Q(K)_{2,1})$ generated by certain elements arising from the unstable chain in $\CFD(X_K)$ and ``nearby'' elements. Specifically, if $\epsilon(K) =1$, then let $\lambda = \lambda^1_{\ell_1}$ as in the previous section. If $\epsilon(K)=-1$, then set $\kappa = \kappa^1_{k_1}$, and $\kappa' = \kappa^1_{k_1-1}$ if $k_1>1$ (where $k_1$ is the length of the corresponding vertical arrow in $\CFKm(S^3,K)$). The vertical chain ending in $\xi_1 = \eta_0$ includes differentials
\[
\xi_2 \xrightarrow{D_{123}} \kappa \xleftarrow{D_1} \xi_1 = \eta_0
\]
if the corresponding vertical arrow in $\CFKm(S^3,K)$ has length $k_1 = 1$, or
\[
\kappa' \xrightarrow{D_{23}} \kappa \xleftarrow{D_1} \xi_1 = \eta_0
\]
if $k_1>1$.

Let $W$ be the subspace of $\CFK(S^3, Q(K)_{2,1})$ generated by the elements
\begin{equation} \label{eq:unstablegens-Q21}
\{x_0 \otimes \eta_0, x_2 \otimes \xi_0, y_2 \otimes \xi_0\} \cup \{x_1 \otimes \mu_1, y_1 \otimes \mu_i \mid i=1, \dots, s\} \cup H,
\end{equation}
where
\begin{equation} \label{eq:nearbygens-Q21}
H =
\begin{cases}
\{x_3 \otimes \lambda, y_3 \otimes \lambda\} & \text{if } \epsilon(K)=1 \\
\{p_2 \otimes \xi_2, q_2 \otimes \xi_2, p_5 \otimes \kappa, q_5 \otimes \kappa\}
 & \text{if } \tau(K) \le 0, \, \epsilon(K) = -1, \text{ and } k_1 = 1 \\
\{p_3 \otimes \kappa', q_3 \otimes \kappa', p_5 \otimes \kappa, q_5 \otimes \kappa\}
 & \text{if } \tau(K) \le 0, \, \epsilon(K) = -1, \text{ and } k_1 > 1.
\end{cases}
\end{equation}
The verification that $W$ is a direct summand is slightly trickier than in the previous section: Lemma \ref{lemma:D1D2D3(xi0)} guarantees that the multiplications $m_4(x_2, \rho_3, \rho_2, \rho_1) = U \cdot p_1$ and $m_4(y_2, \rho_3, \rho_2, \rho_1) = U \cdot q_1$ do not produce differentials from elements in $W$ to elements not in $W$, and Lemma \ref{lemma:D3(xi2)} does the same for the multiplications $m_2(p_2, \rho_3) = p_1$ and $m_2(q_2,\rho_3) = U \cdot q_1$ in the case where $k_1=1$. By Proposition \ref{prop:Aglue}, the Alexander gradings of certain of these generators are
\begin{align*}
A_{Q(K)_{2,1}}(x_0 \otimes \eta_0) &= -2A_K(\eta_0)-4 = 2\tau(K) -4 \\
A_{Q(K)_{2,1}}(x_2 \otimes \xi_0) &= -2A_K(\xi_0)-4 = -2\tau(K) -4 \\
A_{Q(K)_{2,1}}(y_2 \otimes \xi_0) &= -2A_K(\xi_0)-2 = -2\tau(K) -2.
\end{align*}
As in the previous section, we shall see that the free part of $\HFKm(S^3,K)$ is supported in $W$.

\begin{itemize}
\item
When $\tau(K)>0$, the differential on $W$ is as follows:
\begin{equation} \label{eq:unstable21-tau>0}
\xymatrix{
x_0 \otimes \eta_0 \ar[dr]^{U^2} & x_1 \otimes \mu_1 \ar[d]^{U^4} \ar[dr]^{U^2} & \dots \ar[dr]^{U^2} &
x_1 \otimes \mu_{s-1} \ar[d]^{U^4} \ar[dr]^{U^2} &
x_1 \otimes \mu_s \ar[d]^{U^4} & x_2 \otimes \xi_0 \ar[l] \ar[d]^{U^2}  \\
 & y_1 \otimes \mu_1 & \dots & y_1 \otimes \mu_{s-1} & y_1 \otimes \mu_s & y_2 \otimes \xi_0 \ar[l]_{U^2} \\
&&&& y_3 \otimes \lambda \ar@{-->}[u] & x_3 \otimes \lambda \ar@{-->}[l]_{U^2} \ar@{-->}[u] }
\end{equation}
where the generators in the bottom row are included if $\epsilon(K)=1$. Note that \eqref{eq:unstable21-tau>0} is identical to \eqref{eq:unstable-tau>0}, except that $U$ has been replaced by $U^2$ throughout. Just as in the previous section, the free part of the homology is generated by
\[
U^{s-1} x_0 \otimes \eta_0 + U^{s-2} x_1 \otimes \mu_1 + \cdots + x_1 \otimes \mu_{s-1} + y_2 \otimes \xi_0,
\]
which has Alexander grading $-2\tau(K) - 2$, so $\tau(Q(K)_{2,1}) = 2\tau(K) + 2 = 2 \tau(Q(K))$. Therefore, $\epsilon(Q(K)) = 1$.

\item
When $\tau(K)<0$, we consider the three cases in \eqref{eq:nearbygens-Q21}.

\begin{itemize}
\item If $\epsilon(K)=1$, the differential on $W$ takes the form:
\begin{equation} \label{eq:unstable21-tau<0,epsilon=1}
\xymatrix{
x_3 \otimes \lambda \ar[d]^{U^2} \ar[dr] &
x_2 \otimes \xi_0 \ar[dr]^{U^2} \ar[d]^{U^2} & x_1 \otimes \mu_1 \ar[d]^{U^4} \ar[dr]^{U^2} & \dots \ar[dr]^{U^2} & x_1 \otimes \mu_s \ar[d]^{U^4} \ar[dr] & \\
y_3 \otimes \lambda & y_2 \otimes \xi_0 & y_1 \otimes \mu_1 & \dots & y_1 \otimes \mu_s & x_0 \otimes \eta_0
}
\end{equation}
The free part of the homology is generated by $y_3 \otimes \lambda$. Since
\begin{align*}
A_{Q(K)_{2,1}}(y_3 \otimes \lambda) &= A_{Q(K)_{2,1}}(x_3 \otimes \lambda) + 2 \\
&= A_{Q(K)_{2,1}}(y_2 \otimes \xi_0) + 2 \\
&= -2\tau(K),
\end{align*}
we see that $\tau(Q(K)_{2,1}) = 2 \tau(K) = 2\tau(Q(K))$, and hence $\epsilon(Q(K))=1$.

\item If $\epsilon(K)=-1$ and $k_j=1$, the differential on $W$ takes the form:
\begin{equation} \label{eq:unstable21-tau<0,epsilon=-1}
\xymatrix@C=0.25in{
x_2 \otimes \xi_0 \ar[dr]^{U^2} \ar[d]^{U^2} & x_1 \otimes \mu_1 \ar[d]^{U^4} \ar[dr]^{U^2} & \dots \ar[dr]^{U^2} & x_1 \otimes \mu_s \ar[d]^{U^4} \ar[dr] \ar[rr]^{U^3} & &
 p_5 \otimes \kappa \ar[d]^{U} & p_2 \otimes \xi_2 \ar[l] \ar[d]^{U} \\
y_2 \otimes \xi_0 & y_1 \otimes \mu_1 & \dots & y_1 \otimes \mu_s \ar@/_1.5pc/[rr]_{} & x_0 \otimes \eta_0 &
 q_5 \otimes \kappa & q_2 \otimes \xi_2 \ar[l]
}
\end{equation}
The same is true when $k_j>1$, except that $p_2 \otimes \xi_2$ and $q_2 \otimes \xi_2$ are replaced with $p_3 \otimes \kappa'$ and $q_3 \otimes \kappa'$, respectively. In this case, the free part of the homology is generated by $y_2 \otimes \xi_0$, with Alexander grading $-2\tau(K) -2$. Therefore, $\tau(Q(K)_{2,1}) = 2\tau(K) + 2 = 2 \tau(Q(K))$. If $\tau(K)<-1$, we may conclude that $\epsilon(Q(K)) =1$; if $\tau(K)=1$, we merely see that $\epsilon(Q(K)) \ne -1$.
\end{itemize}

\item
When $\tau(K)=0$, an analysis similar to the previous case shows again that
\[
\tau(Q(K)_{2,1} = \begin{cases} 0 & \text{if } \epsilon(K)=1 \\ 2 & \text{if } \epsilon(K)=-1. \end{cases}
\]
In either case, $\tau(Q(K)_{2,1}) = 2\tau(Q(K))$. \qedhere
\end{itemize}
\end{proof}

Finally, we sketch the proof of Proposition \ref{prop:finalcases}. Let $C_{2,-1} \subset V$ denote the pattern for $(2,-1)$ cabling.

\begin{lemma} \label{lemma:CFAm(V,C2-1)}
The type $A$ module $\CFAm(V,C_{2,-1})$ has generators $a,b,c,d,e$, with $\AA_\infty$ multiplications given by:
\begin{align*}
m_2(a, \rho_1) &= c &
m_2(a, \rho_{12}) &= e \\
m_1(b) &= U \cdot c &
m_2(b, \rho_2) &= d \\
m_2(c, \rho_2) &= e &
m_1(d) &= U \cdot e
\end{align*}
\begin{align*}
m_{3+i}(a, \rho_3, \underbrace{\rho_{23}, \dots, \rho_{23}}_{i}, \rho_2) &= U^{2i+2} \cdot a \text{ for all } i \ge 0 \\
m_{4+i}(a, \rho_3, \underbrace{\rho_{23}, \dots, \rho_{23}}_{i}, \rho_2, \rho_1) &= U^{2i+1} \cdot b \text{ for all } i \ge 0 \\
m_{4+i}(a, \rho_3, \underbrace{\rho_{23}, \dots, \rho_{23}}_{i}, \rho_2, \rho_{12}) &= U^{2i+1} \cdot d \text{ for all } i \ge 0
\end{align*}
\end{lemma}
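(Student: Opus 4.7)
The computation closely parallels Hom's determination of $\CFAm(V, C_{2,1})$ in \cite[Section 4.1]{HomTau}, and my plan is to imitate that argument with the sign of the meridional twist in the pattern reversed. The starting point is a doubly-pointed bordered Heegaard diagram for $(V, C_{2,-1})$ obtained by modifying Figure \ref{fig:cables}(b) accordingly. This diagram has five intersection points, which I would label $a, b, c, d, e$ so that their idempotents---determined by which of the $\alpha$-arcs each intersection lies on---match the statement of the lemma. The two generators $a$ and $d$ occupy the longitudinal arc $\alpha_2^a$ (lying in the $\iota_0$ summand), while $b, c, e$ lie on $\alpha_1^a$ (in the $\iota_1$ summand), accounting for the ``extra'' generators relative to the three-generator model for $C_{2,1}$.

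Next I would read off the low-order $\AA_\infty$ multiplications by inspecting immersed polygons in the diagram. The two empty bigons crossing $w$ exactly once yield the differentials $m_1(b) = U c$ and $m_1(d) = U e$; four rectangles carrying a single Reeb chord on $\partial \HH$ give the length-two multiplications $m_2(a,\rho_1) = c$, $m_2(a, \rho_{12}) = e$, $m_2(b, \rho_2) = d$, and $m_2(c, \rho_2) = e$. The three infinite families of higher multiplications arise from a periodic domain $\PP$ that records the winding of $C_{2,-1}$ around the longitudinal direction of $V$; since the cable has winding number $2$, the domain $\PP$ has multiplicity $2$ at $w$ per wrapping, which is the source of the factors $U^{2i+2}$ and $U^{2i+1}$. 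Concretely, the three families correspond to polygons whose boundary arcs run $a \to a$, $a \to b$, and $a \to d$ respectively, each carrying the Reeb chord sequence $(\rho_3, \rho_{23}, \dots, \rho_{23}, \rho_2)$ possibly extended by $\rho_1$ or $\rho_{12}$. The $\rho_{12}$-extension landing at $d$ is the essential new feature compared with the $(2,1)$-case and comes from composing the unique ``pumpkin'' domain with the short rectangle from $b$ to $d$.

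The main obstacle will be confirming that this list of multiplications is complete, since directly enumerating moduli spaces with arbitrarily many Reeb chords is not transparent from the diagram alone. Two consistency checks handle this. First, the $\AA_\infty$ relations impose stringent constraints on the putative module, rigidifying it up to quasi-isomorphism, and a routine (if tedious) verification confirms that the operations listed above close up; any missing family would manifest as an unresolvable obstruction. Second, since $C_{2,-1}(U) = U$, the pairing theorem (Theorem \ref{thm:pairing}) demands that $\CFAm(V, C_{2,-1}) \boxtimes \CFD(X_U)$ be graded chain homotopy equivalent to $\CFKm(S^3, U) \cong \F[U]$ in Alexander grading $0$, and this computation with the proposed answer works out, pinning down every coefficient. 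A final external sanity check is obtained by tensoring with $\CFD(X_K)$ for a few knots $K$ with small $\epsilon(K)$ and comparing $\tau(K_{2,-1})$ to Hom's formula \eqref{eq:J2-1}.
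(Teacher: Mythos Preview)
Your plan is essentially the first of the two methods the paper itself offers: the paper's one-line proof says the lemma ``can be seen directly from a Heegaard diagram or by taking the tensor product of $\CFAm(V,C_{2,1})$ with the bimodule $\CFDA(\tau_m^{-1})$ from \cite[Section 10.2]{LOTBimodules}.'' So your Heegaard-diagram approach, modelled on Hom's computation for $C_{2,1}$, is on target, and your identification of the extra $\rho_{12}$-family landing in $d$ as the new feature is correct. The bimodule route the paper mentions is worth knowing as an alternative: since $C_{2,-1}$ differs from $C_{2,1}$ by a single negative meridional Dehn twist of the boundary framing, one can obtain $\CFAm(V,C_{2,-1})$ algebraically as $\CFAm(V,C_{2,1}) \boxtimes \CFDA(\tau_m^{-1})$, which avoids drawing a new diagram and makes the appearance of exactly two extra generators transparent.

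There is one concrete slip in your idempotent bookkeeping. You place $e$ on $\alpha_1^a$ in the $\iota_1$ summand, but the multiplications in the statement force $e \in \iota_0$: from $m_2(c,\rho_2)=e$ (and $\rho_2$ ends in $\iota_0$), and independently from $m_1(d)=U\cdot e$ with $d \in \iota_0$. So three generators $a,d,e$ lie in $\iota_0$ and two generators $b,c$ in $\iota_1$. This will matter when you actually draw the diagram and when you run your sanity check against $\CFD(X_U)$, so correct it before proceeding.
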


\begin{proof}
This can be seen directly from a Heegaard diagram or by taking the tensor product of $\CFAm(V,C_{2,-1})$ with the bimodule $\CFDA(\tau_m^{-1})$ from \cite[Section 10.2]{LOTBimodules}.
\end{proof}

\begin{figure}[ht]
\[
\xymatrix{
p_1 \ar[d]^{U^2} & p_2 \ar[d]^{U} \ar[l]_{\sigma_1} \ar[r]^{\sigma_3}
& p_3 \ar[d]^{U} \ar[r]^{\sigma_2} & p_4 \ar[d]^{U} \ar[r]^{\sigma_1} & p_5 \ar[d]^{U}
& x_0 \ar[l]^{U\sigma_{123}} \ar[dr]|{U^2 \sigma_1} & x_1 \ar[d]|{U^4} \ar[l]_{\sigma_2} \ar@/_1.5pc/[ll]_{U^3 \sigma_{23}}
& x_2 \ar[d]|{U^2} \ar[l]_{\sigma_3} \ar@/_4pc/[lllllll]_{U\sigma_{123}}
& x_3 \ar[d]|(0.45){U^2} \ar[dl]|{\sigma_2} \ar[ldddd]_(0.8){U\sigma_2} & x_4 \ar[d]^{U^2} \ar[l]_{\sigma_3} \ar@/^1pc/[ddddlllllll]_(0.6){U\sigma_{123}} & \\
q_1 & q_2 \ar[l]_{U \sigma_1} \ar[r]_{\sigma_3}
& q_3 \ar[r]_{\sigma_2} & q_4 \ar[r]_{\sigma_1} & q_5 &
& y_1 \ar[ll]_{\sigma_{23}} & y_2 \ar[l]_{U^2 \sigma_3} \ar@/^2.5pc/[lllllll]_{U\sigma_{123}}
& y_3 \ar@/^2pc/[ll]^{\sigma_{23}} \ar[ldddd]^{\sigma_2} & y_4 \ar[l]_{\sigma_3} \ar@/^1pc/[ddddlllllll]_(0.4){U\sigma_{123}}
& \\
\\ \\
& & r_1 \ar[d]^{U^2} & r_2 \ar[d]|(0.4){U} \ar[l]^{\sigma_1} \ar[r]^{\sigma_3}
& r_3 \ar[d]^{U} \ar[r]^(0.4){\sigma_2} & r_4 \ar[d]^{U} \ar[r]^{\sigma_1} & r_5 \ar[d]^{U}
& r_6 \ar[d]_{U} \ar[l]^{\sigma_{123}} \ar@/^1pc/[uuulllllll]^(0.6){\sigma_{123}} & r_7 \ar[d]_{U} \ar[l]_(0.4){\sigma_{12}}
& r_8 \ar[d]^{U} \ar[l]_{\sigma_2} & r_9 \ar[d]^{U} \ar[l]_{\sigma_3} \\
& & s_1 & s_2 \ar[l]^{U \sigma_1} \ar[r]_{\sigma_3} & s_3 \ar[r]_{\sigma_2} & s_4 \ar[r]_{\sigma_1} & s_5
& s_6 \ar[l]^{\sigma_{123}} & s_7 \ar[l]^{\sigma_{12}} & s_8 \ar[l]^{\sigma_2} & s_9 \ar[l]^{\sigma_3} \\
}
\]
\caption{A summand of $\CFDm(V,Q_{2,-1})$.}
\label{fig:CFDm(V,Q2-1)}
\end{figure}

\begin{proposition} \label{prop:CFDm(V,Q2-1)}
\allowdisplaybreaks
The type $D$ structure $\CFDm(V,Q_{2,-1})$ has a direct summand with a basis
\[
\{p_1, \dots, p_5, q_1, \dots, q_5, r_1, \dots, r_9, s_1, \dots, s_9, x_0, \dots, x_4, y_1, \dots, y_4\}
\]
with the following properties:
\begin{itemize}
\item
The generators
\[
p_2, p_4, q_2, q_4, r_2, r_4, r_7, r_9, s_2, s_4, s_7, s_9, x_0, x_2, y_2, x_4, y_4
\]
have are in $\iota_0 \CFDm(V,Q_{2,1})$, and the remaining ones are in $\iota_1 \CFDm(V,Q_{2,1})$.

\item
The differential is as shown in Figure \ref{fig:CFDm(V,Q2-1)}.
\end{itemize}
\end{proposition}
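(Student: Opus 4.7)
The plan is to mimic the proof of Proposition \ref{prop:CFDm(V,Q21)}, replacing $\CFAm(V,C_{2,1})$ with $\CFAm(V,C_{2,-1})$ in the pairing
\[
\CFDm(V,Q_{2,-1}) \simeq \CFAm(V,C_{2,-1}) \boxtimes_{\AA_\rho} \CFDD(X(L_Q)).
\]
First I would enumerate the generators of the tensor product: for each basis element $g_i$ of $\CFDD(X(L_Q))$ whose $\rho$-idempotent matches one of the five generators $a,b,c,d,e$ of $\CFAm(V,C_{2,-1})$ from Lemma \ref{lemma:CFAm(V,C2-1)}, include a generator $a_i, b_i, c_i, d_i$, or $e_i$. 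Using the box tensor product formula together with the $\AA_\infty$ multiplications listed in Lemma \ref{lemma:CFAm(V,C2-1)} and the differential on $\CFDD(X(L_Q))$ already tabulated in the preceding theorem, I would write down the resulting type $D$ differential term by term. This is entirely mechanical; the only new feature relative to the $C_{2,1}$ computation is the presence of multiplications that end with $\rho_{12}$ (feeding into the generator $d$) and the two-step chain $b \xrightarrow{\rho_2} d \xrightarrow{U} e$, so some output arrows that appeared as $U\sigma_3$-type arrows in Figure \ref{fig:CFDm(V,Q21)} will be replaced by $U\sigma_{12}$- or $\sigma_2$-type arrows, exactly as seen on comparing Figures \ref{fig:CFDm(V,Q21)} and \ref{fig:CFDm(V,Q2-1)}.

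Next I would prune the resulting type $D$ structure. As in the $(2,1)$ case, several small blocks of generators (those built from $g_3, g_{28}$ and the $g_9$--$g_{20}$--$g_{32}$ cluster) form isolated summands whose tensor with any $\CFD(X_K)$ contributes only torsion and may be split off. I would then perform the analogue of the change of basis carried out in the proof of Proposition \ref{prop:CFDm(V,Q21)}: identify the four generators whose differentials have a boldface (coefficient $1$) term, redefine these generators and the targets of those terms so that four canceling pairs $\{a_i, c_j\}$ appear, and delete those pairs. The $\rho_{12}$-ending multiplications from Lemma \ref{lemma:CFAm(V,C2-1)} will add some extra terms involving $d$-generators into these redefinitions, but the algebra is parallel to the $(2,1)$ case; I expect exactly the same canceling pattern.

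Finally I would rename the surviving generators so that the labels match the statement. Following the same scheme as in Proposition \ref{prop:CFDm(V,Q21)} (the $p_i, q_i$ come from $b$- and $c$-generators along the ``unstable'' chain, the $r_i, s_i$ from the cluster containing $g_{14}$--$g_{17}$--$g_{22}$--$g_{23}$ etc., and the $x_i, y_i$ from the $a$-generators feeding the unstable chain), I would verify the differential coincides with Figure \ref{fig:CFDm(V,Q2-1)}. The only potentially delicate step is ensuring that the new $\rho_{12}$-contributions from $\CFAm(V,C_{2,-1})$ produce precisely the $\sigma_{12}$-labelled arrow $r_6 \xleftarrow{\sigma_{12}} r_7$ (and the diagonal $U\sigma_2$ and $\sigma_2$ arrows from $x_3, y_3$) shown in the figure, rather than some different combination; this is the main obstacle and the one point where the $(2,-1)$ computation genuinely diverges from the $(2,1)$ computation rather than merely reusing it. Since the check is a finite bookkeeping exercise of the same difficulty as the corresponding step in the proof of Proposition \ref{prop:CFDm(V,Q21)}, I would carry it out by direct verification and leave the remaining routine cancellations implicit.
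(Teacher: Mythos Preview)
Your approach is correct and is essentially identical to the paper's own proof, which is even terser: it simply asserts that the argument proceeds as in Proposition~\ref{prop:CFDm(V,Q21)}, with most changes of basis unchanged, and that the primary difference lies in the definitions of $r_7, r_8, r_9, s_7, s_8, s_9$, which now use the $d$ and $e$ generators from $\CFAm(V,C_{2,-1})$; the remaining details are left to the reader. Your identification of the $\rho_{12}$- and $\rho_2$-ending multiplications as the source of the divergence, and your localization of the effect to the $r_7$--$r_9$ portion of the diagram and the diagonal arrows, matches this exactly.
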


\begin{proof}
This proceeds similarly to the proof of Proposition \ref{prop:CFDm(V,Q21)}. Most of the changes of basis are the same; the primary difference is in the definitions of $r_7, r_8, r_9, s_7, s_8, s_9$, which use the $d$ and $e$ generators. The details are left to the reader as an exercise.
\end{proof}

\begin{proof}[Proof of Proposition \ref{prop:finalcases}]
For conciseness, we do not write down $\CFAm(V, Q_{2,-1})$ here. However, the reader can easily verify the following:

\begin{itemize}
\item
The nontrivial summand of $\CFAm(V, Q_{2,-1}) \boxtimes \CFD(X_O)$ is
\begin{equation} \label{eq:CFK(Q2-1(U))}
\xymatrix{
 & x_2 \otimes \xi_0 \ar[dl] \ar[d]^{U^2} & x_4 \otimes \xi_0 \ar[dl] \ar[d]^{U^2} \ar[r]^{U} & r_6 \otimes \xi_0 \ar[d]^{U} \\
x_0 \otimes \xi_0 & y_2 \otimes \xi_0 & y_4 \otimes \xi_0 \ar[r] & s_6 \otimes \xi_0.
}
\end{equation}
The homology is generated by $r_6 \otimes \xi_0$, which implies that $C_{x_0} = C_{x_2} = -3$, $C_{y_2} = C_{x_4} = -1$, $C_{r_6}=0$, and $C_{y_4} = C_{s_6} = 1$.

\item
When $\tau(K)<0$ and $\epsilon(K)=-1$, $\CFKm(Q(K)_{2,-1})$ has a summand $W$ whose differential is exactly the same as \eqref{eq:unstable21-tau<0,epsilon=-1}. The Alexander grading of the generator $y_2 \otimes \xi_0$ is now $-2\tau(K)-1$, so $\tau(Q(K)_{2,-1}) = 2\tau(K)+1 = 2 \tau(Q(K)) - 1$. Thus, $\epsilon(Q(K))=1$.

\item
When $\tau(K)=0$ and $\epsilon(K)=1$, the unstable chain in $\CFD(X_K)$ and a portion of the horizontal chain meeting $\xi_0 = \eta_2$ is
\[
\eta_1 \xrightarrow{D_3} \lambda \xrightarrow{D_2} \xi_0 \xrightarrow{D_{12}} \eta_0
\]
if the length of the horizontal arrow is $1$, and
\[
\lambda' \xrightarrow{D_{23}} \lambda \xrightarrow{D_2} \xi_0 \xrightarrow{D_{12}} \eta_0
\]
if the length is greater than $1$. In the former case, the relevant summand of $\CFKm(Q(K)_{2,-1})$ is
\begin{equation}
\xymatrix{
r_7 \otimes \eta_2 \ar[d]^{U} \ar[r] & r_6 \otimes \xi_0 \ar[d]^{U} & x_3 \otimes \lambda \ar[d]^{U^2} \ar[l]_{U} \ar[dr] & x_2 \otimes \xi_0 \ar[d]^{U^2} \ar[dr] & \\
s_7 \otimes \eta_2 \ar[r] & s_6 \otimes \xi_0 & y_3 \otimes \lambda \ar[l] & y_2 \otimes \xi_0 & x_0 \otimes \eta_0.
}
\end{equation}
In the latter case, the summand is the same, except that $r_7 \otimes \eta_2$ and $s_7 \otimes \eta_2$ are replaced with $r_8 \otimes \lambda'$ and $s_8 \otimes \lambda'$, respectively. In either case, the free part of the homology is generated by $y_3 \otimes \lambda$, with Alexander grading $1$. Hence, $\tau(Q(K)_{2,-1}) = -1$, so $\epsilon(Q(K))=1$. \qedhere
\end{itemize}
\end{proof}

\bibliography{bibliography}
\bibliographystyle{amsplain}

\end{document}